\documentclass[a4paper,11pt,reqno]{amsart} 
\usepackage{amssymb,amsthm,amsmath}
\usepackage{mathtools}
\usepackage{amsfonts}
\usepackage{amscd}
\usepackage{amssymb}

\usepackage{enumerate}
\usepackage{bbm}
\usepackage{graphicx}
\allowdisplaybreaks
\usepackage{mathabx}
\usepackage{color}
\usepackage{amsbsy}
\usepackage{graphicx}
\usepackage{amsthm}
\usepackage{amsmath}
\usepackage{amsxtra}
\usepackage{mathrsfs}
\usepackage{bbm}
\usepackage{dsfont}
\usepackage{enumitem}
\usepackage{comment}

\usepackage{enumerate}
\usepackage{xcolor}
\usepackage{float} 

\usepackage{url}
\usepackage{soul}
\usepackage[hidelinks]{hyperref} 

\usepackage{ifthen}

\hypersetup{
    colorlinks=true,
    linkcolor=red,      
    citecolor=red,       
    urlcolor=cyan        
}

\newtheorem{theorem}{Theorem}[section]

\newtheorem{thm}{Theorem}[subsection]
\newtheorem{defn}[thm]{Definition}
\newtheorem{remark}[theorem]{Remark}
\newtheorem{rmk}[thm]{Remark}

\newtheorem{proposition}[theorem]{Proposition}

\numberwithin{equation}{section}
\definecolor{red}{rgb}{1.0, 0.0, 0.0}
\newcommand{\Bea}{\begin{eqnarray*}}
	\newcommand{\Eea}{\end{eqnarray*}}
\newcommand{\Be} {\begin{equation*}}
	\newcommand{\Ee} {\end{equation*}}
\newcommand{\be} {\begin{equation}}
	\newcommand{\ee} {\end{equation}}
\newcommand{\bea} {\begin{eqnarray}}
	\newcommand{\eea} {\end{eqnarray}}

\newcommand{\al}{\alpha}

\newcommand{\la}{\lambda}

\usepackage{color}

\title[ Weighted Norm Inequalities on the Heisenberg Group ]{ Weighted Norm Inequalities for the Strichartz Fourier transform on the Heisenberg Group}
\author[A. Dasgupta]{Aparajita Dasgupta}
\address{
	Aparajita Dasgupta:
	\endgraf
	Department of Mathematics
	\endgraf
	Indian Institute of Technology Delhi, Hauz Khas
	\endgraf
	New Delhi-110016 
	\endgraf
	India
	\endgraf
	{\it E-mail address:} {\rm adasgupta@maths.iitd.ac.in}
}
\author[P. Gulia]{Prerna Gulia}
\address{
	Prerna Gulia:
	\endgraf
	Department of Mathematics
	\endgraf
	Indian Institute of Technology Delhi, Hauz Khas
	\endgraf
	New Delhi-110016 
	\endgraf
	India
	\endgraf
	{\it E-mail address:} {\rm prernagulia64@gmail.com}
}
\author[S. Pusti]{Sanjoy Pusti}
\address{
	Sanjoy Pusti:
	\endgraf
	Department of Mathematics
	\endgraf
	Indian Institute of Technology Bombay
	\endgraf
	Mumbai-400076 
	\endgraf
	India
	\endgraf
	{\it E-mail address:} {\rm sanjoy@math.iitb.ac.in}
}
\author[S. Thangavelu]{Sundaram Thangavelu}
\address{
		Sundaram Thangavelu:
	\endgraf
	National Science Chair, Department of Mathematics
	\endgraf
	Indian Institute of Science, Bangalore
	\endgraf
	Bangalore-560012 
	\endgraf
	India
	\endgraf
    and
    \endgraf
	Honorary Professor
	\endgraf
	University of Queensland, Brisbane
	\endgraf
    Australia
    \endgraf
	{\it E-mail address:} {\rm veluma@iisc.ac.in}
}

\subjclass{Primary 43A85,42C05; Secondary 43A90, 33C45, 35P10.}
\keywords{Heisenberg group, Strichartz Fourier transform, Pitt's inequality, Laguerre functions, Bessel functions}

\vspace{1cm}

\allowdisplaybreaks
\begin{document}
\begin{abstract}
    In this article, we establish an analogue of Pitt’s inequality for the Strichartz Fourier transform on the Heisenberg group $\mathbb{H}^n$. By exploiting the scalar-valued formulation of the transform and the framework of decreasing rearrangements, we derive weighted $L^p$–$L^q$ estimates of Pitt type. In particular, we obtain sufficient conditions for the validity of such inequalities via weighted Hardy inequalities and Calderón’s interpolation method, and we also prove necessary conditions in the case of radial weights, using structural properties of Laguerre functions and zeros of Bessel function. As an application, we deduce an uncertainty principle of Heisenberg–Pauli–Weyl type in this setting and establish a Paley inequality for the Strichartz Fourier transform. We also derive Pitt's inequality using Hardy's inequality for the case \(p=q=2\). These results extend the classical Euclidean theory of Pitt’s inequality to the non-commutative, nilpotent setting of $\mathbb{H}^n$ for the sub-Laplacian and conformal Laplacian. Here we highlight the role of Laguerre functions in harmonic analysis on the Heisenberg group.
\end{abstract}
\maketitle
\section{Introduction}

The study of inequalities associated with the Fourier transform is a central theme in harmonic analysis. Such inequalities not only provide quantitative estimates relating a function and its Fourier transform, but also serve as essential tools in areas ranging from number theory to partial differential equations. A classical example is the Hausdorff–Young inequality, which shows that the Fourier transform is bounded from $L^p(\mathbb{R}^n)$ into $L^{p'}(\mathbb{R}^n)$ for $1 \leq p \leq 2$, where $p'$ denotes the Hölder conjugate of $p$. This result reveals the deep interplay between integrability properties in the time and frequency domains and serves as a prototype for a large family of Fourier inequalities.

A different yet closely related direction was initiated in 1937 by Pitt \cite{Pitts-1937}, who proved a weighted $L^p$–$L^q$ inequality for Fourier coefficients of integrable functions on the unit circle. Pitt’s inequality illustrates how weights can control the distribution of mass between a function and its Fourier transform, balancing decay at infinity against local integrability. Over time, this result has been extended and refined in many directions. In particular, Heinig \cite{heinig_1984} considered the problem in a general Euclidean setting, formulating necessary and sufficient conditions on pairs of non-negative weights $(u,v)$ for which the weighted inequality
\begin{equation}\label{weighted for sublinear}
    \left( \;\int\limits_{\mathbb{R}^n}|u(\xi)Tf(\xi)|^qd\xi\right)^{1/q}\leq C \left(\; \int\limits_{\mathbb{R}^n}|v(x)f(x)|^pdx\right)^{1/p},
\end{equation}
holds for a sublinear operator \(T\) defined on suitable function spaces where \(C\) is a constant which is independent of the choice of \(f\).

By choosing \(T\) to be the Fourier transform on \(\mathbb{R}^n\), inequality \eqref{weighted for sublinear} takes the form  
\begin{equation}\label{pitts inequality Rn}
    \left( \int\limits_{\mathbb{R}^n} |u(\xi)\,\widehat{f}(\xi)|^q \, d\xi \right)^{1/q} 
    \leq C \left( \int\limits_{\mathbb{R}^n} |v(x) f(x)|^p \, dx \right)^{1/p}.
\end{equation}  

A natural way to investigate the validity of \eqref{pitts inequality Rn} is via decreasing rearrangements. Denote by \(u^*\) the decreasing rearrangement of the non-negative weight \(u\) on \(\mathbb{R}^n\), and set \(U = u^*\). Similarly, let \(1/V = (1/v)^*\). In this framework, Heinig \cite{heinig_1984} established the following fundamental result.  

\begin{theorem} 
Let \(1 \leq p \leq q \leq \infty\). If \(p < \infty\) and \((p,q) \neq (2,2)\), and if  
\begin{equation}\label{Rn pitts cond1}
    \sup_{s > 0} \left( \int\limits_{0}^{1/s} U(t)^q \, dt \right)^{1/q} 
                   \left( \int\limits_{0}^{s} V(t)^{-p'} \, dt \right)^{1/p'} < \infty,  
\end{equation}  
then inequality \eqref{pitts inequality Rn} holds.  Moreover, in the special case \(p = q = 2\), if condition \eqref{Rn pitts cond1} is satisfied together with  
\begin{equation}\label{Rn pitts cond2}
    \sup_{s > 0} \left( \int\limits_{1/s}^{\infty} U(t)^2 \, t^{-1} \, dt \right)^{1/2} 
                   \left( \int\limits_{s}^{\infty} V(t)^{-2} \, t^{-1} \, dt \right)^{1/2} < \infty,  
\end{equation}  
then \eqref{pitts inequality Rn} remains valid.  
\end{theorem}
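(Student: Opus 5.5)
The plan is to reduce \eqref{pitts inequality Rn} to a one-dimensional weighted Hardy inequality for rearrangements, following Jodeit--Torchinsky and Heinig. There are three steps.

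\textbf{Step 1: rearranging the left side.} By the Hardy--Littlewood rearrangement inequality,
\[
\int |u\widehat f|^q \le \int_0^\infty U(t)^q (\widehat f)^*(t)^q\,dt .
\]
Similarly, since $1/V=(1/v)^*$, a weighted rearrangement argument controls $\int_0^\infty V(t)^p f^*(t)^p\,dt$ by $\int |vf|^p$. (This uses the Hardy--Littlewood inequality for $f^*$ against $(1/v)^*$, together with duality and Hölder.) So it suffices to bound $(\widehat f)^*$ pointwise by averages of $f^*$ and to verify a one-dimensional Hardy inequality.

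\textbf{Step 2: pointwise bound for $(\widehat f)^*$.} The Fourier transform is of weak/strong types $(1,\infty)$ and $(2,2)$. The Calderón--Jodeit--Torchinsky rearrangement estimate for such operators then gives
\[
(\widehat f)^*(t)\le C\left(\int_0^{1/t} f^*(s)\,ds+ t^{-1/2}\left(\int_{1/t}^\infty f^*(s)^2\,ds\right)^{1/2}\right).
\]
I would derive this by splitting $f=f\chi_{\{|f|>f^*(1/t)\}}+f\chi_{\{|f|\le f^*(1/t)\}}$, applying the $L^1\to L^\infty$ bound to the first piece and Plancherel plus Chebyshev to the second. For $(p,q)\ne(2,2)$ the second term is further dominated by a Hardy-type expression, namely $\int_0^{1/t} f^*$ plus a tail. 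In the case $p\le q$, $p\ne2$, one can alternatively use the known simpler bound $\sup_{s\le t}\ldots$ via interpolation. I would instead use the form
\[
(\widehat f)^*(t)\lesssim \int_0^{1/t}f^*(s)\,ds
\]
valid for the relevant $L^p$ functions, modulo the $L^2$ tail term.

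\textbf{Step 3: weighted Hardy inequalities.} Substitute $t\mapsto 1/t$ in the first term. The inequality
\[
\left(\int_0^\infty U(t)^q\left(\int_0^{1/t}g\right)^q dt\right)^{1/q}\le C\left(\int_0^\infty V^p g^p\right)^{1/p},
\]
with $g=f^*$, is exactly the classical Hardy inequality (Bradley/Muckenhoupt, valid for $1\le p\le q\le\infty$). Its characterizing condition is \eqref{Rn pitts cond1}. The second term is a dual Hardy operator $\int_{1/t}^\infty$ acting on $(f^*)^2$, with exponents $p/2$ and $q/2$. For $p=q=2$ this is a Hardy inequality at the $L^1$ endpoint, and its Muckenhoupt-type condition is precisely \eqref{Rn pitts cond2}; this is why the extra hypothesis appears there.

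\textbf{Main obstacle.} The hardest part is disposing of the $L^2$ tail term when $(p,q)\ne(2,2)$ using only \eqref{Rn pitts cond1}. My first approach is to exploit that $f^*$ is decreasing. For $p<2$, Hölder and the monotonicity give
\[
t^{-1/2}\left(\int_{1/t}^\infty (f^*)^2\right)^{1/2}\lesssim \int_0^{1/t}f^*+\text{(Hardy tail)}.
\]
Then \eqref{Rn pitts cond1}, which forces $U^q$ and $V^{-p'}$ to have compatible power growth, implies the dual Hardy condition automatically when $p\ne q$ or $p\ne2$. If this fails, I would instead interpolate (Calderón) between the $(1,\infty)$ and $(2,2)$ endpoints directly on the rearranged weighted spaces.
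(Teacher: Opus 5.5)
Your overall architecture is the right one, and it is the one the paper uses (Heinig's argument, reproduced in the proof of Theorem~\ref{theorem-Pitts for Hn}). You rearrange both sides, bound $(\widehat f)^*$ pointwise from the $(1,\infty)$ and $(2,2)$ estimates, and treat $\int_0^{1/t}f^*$ with Theorem~\ref{weighted hardy}(a); after $t\mapsto 1/t$ its condition is exactly \eqref{Rn pitts cond1}. Your displayed pointwise bound is also valid.

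The gap is the tail term when $(p,q)\neq(2,2)$. This is the only place the restriction $(p,q)\neq(2,2)$ enters, so it carries the real content of the theorem. You assert that \eqref{Rn pitts cond1} ``forces compatible power growth'' and therefore implies the dual Hardy condition. But \eqref{Rn pitts cond1} says nothing about power behaviour of general weights, and the ``H\"older and monotonicity'' reduction to an unspecified ``Hardy tail'' is never made precise.

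The form in which you keep the tail also does not fit the available tools. Read $t^{-1/2}\bigl(\int_{1/t}^\infty (f^*)^2\bigr)^{1/2}$ as a Hardy operator on $(f^*)^2$ with exponents $p/2,q/2$. When $p<2$ this is outside the range of Theorem~\ref{weighted hardy}. At $p=q=2$ it becomes an $L^1\to L^1$ Hardy inequality. By Fubini, that inequality is characterized by the pointwise condition $\int_{1/s}^\infty U(t)^2t^{-1}\,dt\lesssim V(s)^2$, not by \eqref{Rn pitts cond2}. That pointwise condition does follow from \eqref{Rn pitts cond1} together with \eqref{Rn pitts cond2}, but it is not ``precisely'' \eqref{Rn pitts cond2}.

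Here is what is missing. Keep the tail in Calder\'on's form $t^{-1/2}\int_{1/t}^\infty s^{-1/2}f^*(s)\,ds$. This is what $S_\sigma$ gives, and it dominates your $L^2$ tail modulo $\int_0^{1/t}f^*$. Then apply Theorem~\ref{weighted hardy}(b) with $u_1(t)=U(1/t)t^{1/2-2/q}$, $v_1(t)=t^{1/2}V(t)$ and $f_1(t)=t^{-1/2}f^*(t)$. This requires
\[
\sup_{s>0}\Bigl(\int_{1/s}^{\infty}U(t)^q\,t^{-q/2}\,dt\Bigr)^{1/q}\Bigl(\int_{s}^{\infty}t^{-p'/2}\,V(t)^{-p'}\,dt\Bigr)^{1/p'}<\infty ,
\]
and you must prove that \eqref{Rn pitts cond1} implies this when $(p,q)\neq(2,2)$. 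The paper quotes \cite[Lemma 3.3]{weighted-fourier-symmetric-spaces-Pusti} for this step.

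The lemma is short. Put $A(x)=\int_0^xU^q$ and $B(x)=\int_0^xV^{-p'}$. Both $A(x)/x$ and $B(x)/x$ decrease, and \eqref{Rn pitts cond1} reads $A(1/s)^{1/q}B(s)^{1/p'}\le K$.

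If $q>2$:
\begin{itemize}
\item Monotonicity of $U$ gives $\int_{1/s}^\infty U^q t^{-q/2}\,dt\lesssim s^{q/2}A(1/s)$.
\item For $t>s$ you have $B(t)\le K^{p'}A(1/t)^{-p'/q}\le K^{p'}(t/s)^{p'/q}A(1/s)^{-p'/q}$.
\item Integrating $\int_s^\infty t^{-p'/2}\,dB(t)$ by parts with this bound gives $\lesssim K^{p'}s^{-p'/2}A(1/s)^{-p'/q}$; the integral converges precisely because $q>2$.
\item The product of the two factors is then $\lesssim K$.
\end{itemize}

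If $q\le 2$, then $p<2$, so $p'>2$. The mirror argument works: use monotonicity of $V^{-1}$ for the second factor, and \eqref{Rn pitts cond1} plus $B(1/t)\ge B(s)/(ts)$ for $t>1/s$ for the first.

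At $p=q=2$ the relevant powers become $t^{-1}$ and the argument breaks down logarithmically. That is exactly why \eqref{Rn pitts cond2} must be assumed. With the Calder\'on form of the tail, Theorem~\ref{weighted hardy}(b) at $p=q=2$ yields precisely \eqref{Rn pitts cond2}.
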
  

In the same paper, Heinig also provided a necessary condition, showing that Pitt’s inequality can only hold under additional structural assumptions on the weights.  

\begin{theorem}
Suppose \(1 < p, q < \infty\) and that \eqref{pitts inequality Rn} is valid for two radial weights \(u\) and \(v\) on \(\mathbb{R}^n\). Then  
\[
    \sup_{s > 0} \left( \int\limits_{|\xi| < \theta_0/s} u(\xi)^q \, d\xi \right)^{1/q} 
                  \left( \int\limits_{|x| < s} v(x)^{-p'} \, dx \right)^{1/p'} < \infty,
\]  
where \(\theta_0\) denotes a positive constant strictly smaller than the first zero of the Bessel function \(j_{n/2}\).  
\end{theorem}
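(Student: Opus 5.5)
This is Heinig's necessity theorem for radial weights on \(\mathbb{R}^n\). The plan is to test the inequality \eqref{pitts inequality Rn} against a family of nonnegative radial functions supported in the ball \(\{|x|<s\}\). These functions are chosen to nearly extremize the duality pairing with \(v^{-p'}\) on that ball. We then bound \(|\widehat f|\) from below on the small ball \(\{|\xi|<\theta_0/s\}\) using the positivity of \(j_{n/2}\) below its first zero.

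Fix \(s>0\) and take
\[
f(x)=v(x)^{-p'}\chi_{\{|x|<s\}}(x),
\]
truncated first if necessary so that everything is finite, with the truncation removed at the end by monotone convergence. The right-hand side of \eqref{pitts inequality Rn} then equals
\[
\Big(\int_{|x|<s} v^{-p'}\,dx\Big)^{1/p},
\]
because \(v^p\, v^{-pp'}=v^{-p'}\).

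For the left-hand side we use that \(f\) is radial and nonnegative, so
\[
\widehat f(\xi)=\int_{|x|<s} f(x)\,\Phi(|x||\xi|)\,dx ,
\]
where \(\Phi(r)=c_n\, r^{-(n-2)/2}J_{(n-2)/2}(r)\) is the spherical function. With the normalization in which \(j_{n/2}\) denotes the relevant normalized Bessel function, \(\Phi\) is positive and decreasing on \([0,\theta_0]\), because \(\theta_0\) lies below the first zero. Hence \(\Phi(r)\ge \Phi(\theta_0)=:c>0\) for \(0\le r\le \theta_0\). When \(|\xi|<\theta_0/s\) and \(|x|<s\) we have \(|x||\xi|<\theta_0\), and therefore
\[
\widehat f(\xi)\ \ge\ c\int_{|x|<s} v^{-p'}\,dx .
\]
Inserting this into \eqref{pitts inequality Rn} gives
\[
c\Big(\int_{|\xi|<\theta_0/s}u^q\Big)^{1/q}\int_{|x|<s}v^{-p'}\ \le\ C\Big(\int_{|x|<s}v^{-p'}\Big)^{1/p}.
\]
Dividing, and using \(1-1/p=1/p'\), yields the stated supremum bound uniformly in \(s\).

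The main obstacle is justifying the division, since we need \(0<\int_{|x|<s}v^{-p'}<\infty\).
\begin{itemize}
\item \textbf{Infinite integral.} If \(\int_{|x|<s}v^{-p'}=\infty\), apply the argument to \(f_N=\min(v^{-p'},N)\chi_{\{|x|<s,\ v>1/N\}}\), or to a duality-type extremizer restricted to sets where the integral is finite. This gives
\[
\Big(\int_{|\xi|<\theta_0/s}u^q\Big)^{1/q}\le C'\Big(\int_{E_N}v^{-p'}\Big)^{-1/p'}\longrightarrow 0 .
\]
So either \(u\equiv0\) near the origin or the integral is finite, and the product stays bounded in both cases.
\item \textbf{Vanishing integral.} If \(\int_{|x|<s}v^{-p'}=0\), the product is trivially \(0\).
\end{itemize}
Radiality of \(v\) is what makes \(f\) radial, and hence what makes the explicit positive-kernel representation of \(\widehat f\) available. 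Radiality of \(u\) is not really needed, beyond the fact that the left-hand side is taken over balls.
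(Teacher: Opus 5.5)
The paper only quotes this Euclidean result from Heinig, but your argument follows exactly the template the paper uses for its Heisenberg analogue (Theorem~\ref{thm-necessary conditions for radial functions}): test with $v^{-p'}$ times the indicator of the region, bound the radial kernel below by a positive constant wherever the product of the radii stays below the first zero, insert into the inequality and divide, so the proposal is correct and essentially the same approach (the paper gets the lower bound from the monotonicity of $L^{n-1}_k$, you get it from $\frac{d}{dr}\bigl(r^{-\nu}J_\nu(r)\bigr)=-r^{-\nu}J_{\nu+1}(r)$ and interlacing). Say explicitly, rather than through a ``normalization'' remark, that what you use is $\theta_0<j_{(n-2)/2,1}$, the first zero of the kernel Bessel function $J_{(n-2)/2}$ (the counterpart of the paper's $j_{n-1,1}$ on $\mathbb{C}^n\cong\mathbb{R}^{2n}$); if $j_{n/2}$ were read literally as $J_{n/2}$, your kernel is negative on $(j_{(n-2)/2,1},j_{n/2,1})$, and that range needs a different test function, e.g.\ $v^{-p'}$ restricted to a cube of circumradius $\pi s/(4\theta_0)$ inside $\{|x|<s\}$ carrying a fixed fraction of $\int_{|x|<s}v^{-p'}$, which gives $|\widehat f|\ge \cos(\pi/4)\int f$ on $\{|\xi|<\theta_0/s\}$ for any $\theta_0$.
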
  

Here one can notice that a particular important consequence of these general criteria arises when one specializes to power weights. Taking \(u(\xi) = |\xi|^{-\sigma}\) and \(v(x) = |x|^{\alpha}\), one recovers the classical Pitt inequality on $\mathbb{R}^n$ (see \cite{benedetto-heinig-weighted-inequalities, Pitts-1937}): for \(1 < p \leq q < \infty\),  
\begin{equation}\label{classical Pitts}
    \left( \int\limits_{\mathbb{R}^n} |\xi|^{-\sigma q} |\widehat{f}(\xi)|^q \, d\xi \right)^{1/q} 
    \leq C \left( \int\limits_{\mathbb{R}^n} |x|^{\alpha p} |f(x)|^p \, dx \right)^{1/p},    
\end{equation}  
which holds if and only if  
\[
   0 \leq \sigma < \frac{n}{q}, 
   \qquad 0 \leq \alpha < \frac{n}{p'}, 
   \qquad \text{and} \qquad 
   \alpha - \sigma = n \left( 1 - \frac{1}{p} - \frac{1}{q} \right).
\]  

This formulation not only unifies Pitt’s original inequality with its weighted generalizations but also situates it alongside other fundamental Fourier inequalities, such as Hausdorff–Young and Paley’s inequalities, as part of a broader framework in harmonic analysis.  
Paley's inequality is another important inequality associated with the Fourier transform, which concerns the question of the existence of a measure \(\nu\) such that the Fourier transform is \(L^p-L^p_\nu\) bounded. In the Euclidean context, H\"ormander \cite{Hormander-1960-paley} pioneered the study of such inequality. Over the years, weighted norm inequalities have inspired formulations across different mathematical structures, resulting in an extensive literature in this field. One can refer to \cite{Beckner-Pitts_and_uncertainty_principle_1995,   beckner-Pitts_with_sharp_convolution_estimates, benedetto-heinig-weighted-inequalities, Pitts_and_Boas_for_Fourier_and_Hankel,  Pitts_for_quaternion_fourier_transform,Liflyand-Tikhonov} and the references therein for a detailed overview of the scope of such weighted inequalities. 

In 2024, Kumar et al. \cite{weighted-fourier-symmetric-spaces-Pusti} studied weighted \(L^p-L^q\) inequalities for the Fourier transform in the setup of rank one Riemannian symmetric spaces of non-compact type. These results highlight the role of Fourier-analytic techniques adapted to non-Euclidean geometric structures. Motivated by the classical Helgason Fourier transform on symmetric spaces, in 2023, the fourth author \cite{Strichartz_Fourier_thangavelu} proposed a definition for a scalar-valued Fourier transform on the Heisenberg group. This transform is known as the Strichartz Fourier transform. It is defined using the joint eigenfunctions of the sublaplacian \(\mathcal{L}\) and the vector field \(T=\frac{\partial}{\partial t}\) of the Heisenberg group.

The Heisenberg group \(\mathbb{H}^n\), is the simplest example of a non-abelian nilpotent Lie group whose underlying manifold is $\mathbb{C}^n \times \mathbb{R}$, equipped with the following group operation
 $$
(z,t)(w,s)=\left( z+w, t+s+\frac{1}{2}\operatorname{Im}(z \cdot \bar{w})\right),
 $$
 where \((z,t),(w,s) \in \mathbb{C}^n \times \mathbb{R}\).
 For a function \(f\) on \(\mathbb{H}^n\), its Fourier transform (in usual sense) is an operator on \(L^2(\mathbb{R}^n)\) defined as 
 $$
 \widehat{f}(\lambda)= \int\limits_{\mathbb{C}^n \times \mathbb{R}}f(z,t)\pi_{\lambda}(z,t)\;dz\;dt,
 $$
 where \(\pi_{\lambda}\) denotes the Schr\"odinger representation of \(\mathbb{H}^n\) on \(L^2(\mathbb{R}^n)\). More explicitly, for \(\varphi \in L^2(\mathbb{R}^n)\),
 $$
 \pi_\lambda(z,t)\varphi(\xi)=e^{i\lambda t}e^{i \lambda (x \cdot \xi+\frac{1}{2}x \cdot y) }\varphi(\xi+y),
 $$
 where \(z=x+iy\). From the definition, it is easy to see that 
 $$\widehat{f}(\lambda)=\int\limits_{\mathbb{C}^n}f^\lambda(z)\pi_\lambda(z,0)\,dz,$$
 where \(f^{\lambda}\) stands for the inverse Fourier transform of \(f\) in the center variable, that is, 
 $$
 f^{\lambda}(z)=\int\limits_{-\infty}^{\infty}f(z,t)e^{i\lambda t}\,dt.
 $$
 In the Heisenberg context, the group Fourier transform is operator-valued and, although it satisfies Plancherel and inversion formulas, its complexity makes it less suitable for addressing problems such as the characterization of the Schwartz class or the formulation of weighted inequalities. This difficulty is overcome by the Strichartz Fourier transform, introduced in \cite{Strichartz_Fourier_thangavelu}, which provides a scalar-valued analogue sharing several structural features with the Helgason Fourier transform on symmetric spaces. This transform enables explicit descriptions of images of test functions via the Hecke–Bochner formula and allows one to restate classical theorems of Hardy and Ingham in this new framework. 

Building on this development, our goal in this article is to extend the study of Strichartz Fourier transform. The novelty of this work lies in establishing  
weighted $L^p$--$L^q$ inequalities of Pitt type for the Strichartz Fourier transform. 
The scalar-valued formulation proves crucial here, as it allows the use of decreasing rearrangements and interpolation methods, that are unavailable in the operator-valued setting. We use these methods to obtain sufficient conditions on weights for the Pitt's inequality on \(\mathbb{H}^n\), which, to the best of our knowledge, have not appeared previously in the literature. We further derive necessary conditions on pairs of weight for these inequalities to hold, 
exploiting the structural role of Laguerre functions in the Fourier analysis on the Heisenberg group.
We also investigate the connection between Pitt’s inequality and Hardy’s inequality. After recalling their equivalence in the Euclidean setting, we extend this relationship to the Heisenberg group for the case \(p=q=2\). In particular, we show how Pitt-type inequalities (and corresponding Hardy inequalities) are obtained for the conformal sublaplacian for different weights. We also derive analogues of Pitt's inequality for the non-homogeneous weight and Pitt's inequality for the fractional sublaplacian. Reformulating these results in terms of the Strichartz Fourier transform provides a unified framework for understanding weighted uncertainty-type inequalities on \(\mathbb{H}^n.\)
 Although the optimality of the constants obtained is not addressed in this work.


 We conclude the introduction by providing a brief outline of the article. In section \ref{sec 2}, we present essential background about the Strichartz Fourier transform, recalling the Plancherel formula, inversion formula and the Hausdorff-Young inequality. To establish the Pitt's inequality, we also need the concept of decreasing rearrangement of a function on a metric space and the weighted Hardy inequalities which have been described briefly in the section \ref{sec 2}. In sections \ref{sec 3} and \ref{sec 4}, we prove Pitt's inequality for the Strichartz Fourier transform and derive necessary conditions on the weights. Moreover we also deduce an uncertainty principle of Heisenberg–Pauli–Weyl type as a theorem. Subsequently, in section \ref{sec 5}, we give a non-trivial example of power weights for the theorem to hold and we deduce sufficient and necessary conditions on power weights. In section \ref{sec 6}, we prove a version of Paley’s inequality within the framework of the Strichartz Fourier transform. Finally, in section \ref{sec 7} we study Pitt's inequality via Hardy's inequality for usual operator valued Fourier transform on Heisenberg group.

\section{Preliminaries}\label{sec 2}
In this section, we recall basic preliminaries required to understand the Strichartz Fourier transform. Furthermore, we will also recall its associated Plancherel theorem, inversion formula and the Hausdorff-Young inequality.

For \(\delta >-1\) and \(k \in \mathbb{N} \cup \{0\}\), let \(L^{\delta}_k\) denote the Laguerre polynomial of type \(\delta\) and degree \(k\), defined as
$$
L^{\delta}_k(t)=\frac{t^{-\delta}e^t}{k!}\left ( \frac{d}{dt} \right )^k \left( e^{-t}t^{k+\delta}\right),\; t\geq 0.
$$
Given \(\lambda \in \mathbb{R}^*\), let \(\varphi^{n-1}_{k,\lambda}\) denote the Laguerre function \(\mathbb{C}^n\) defined as 
$$
\varphi^{n-1}_{k,\lambda}(z)=L^{n-1}_k\left( \frac{1}{2}|\lambda||z|^2\right) e^{-\frac{1}{4}|\lambda||z|^2}.
$$
For more details about the Laguerre functions, one can refer to  \cite{thangaveluheisenberg, wong_weyl-transforms}.

The Laguerre functions play an important role in determining the joint eigenfunctions of the sublaplacian \(\mathcal{L}\) on \(\mathbb{H}^n\) and the left invariant vector field \(\frac{\partial}{\partial t}\). To make the paper self-contained, we briefly recall the joint spectral theory of \(\mathcal{L}\) and \(T\). The basis of the Heisenberg algebra \(\mathfrak{h}_n\) consists of the left invariant vector fields
\begin{equation*}
    X_i=\frac{\partial}{\partial x_i}+\frac{1}{2}y_i\frac{\partial}{\partial t},\; Y_i=\frac{\partial}{\partial y_i}-\frac{1}{2}x_i\frac{\partial}{\partial t},\; T= \frac{\partial}{\partial t},\; i=1,2,\dots , n.
\end{equation*}
It is known that the sublaplacian \(\mathcal{L}=-\sum\limits_{i=1}^n\left( X_i^2+Y_i^2 \right)\) commutes with the vector field \(T\). This enables the study of the joint spectral theory of these two operators. 

From \cite{thangaveluheisenberg}, we have that the functions \(e^{n-1}_{k,\lambda}(z,t)=e^{i \lambda t}\varphi^{n-1}_{k,\lambda}(z)\) satisfy the following eigenvalue equations
$$
\mathcal{L}(e^{n-1}_{k,\lambda})=(2k+n)\lvert \lambda \rvert e^{n-1}_{k,\lambda} , \;\;\;-i\frac{\partial}{\partial t}(e^{n-1}_{k,\lambda})=\la e^{n-1}_{k,\lambda} .
$$
The operators \(L_{\lambda}\) defined by the relation \(\mathcal{L}(e^{i \lambda t}f(z)=e^{i\lambda t}L_{\lambda}f(z)\), are called special Hermite operators. The Laguerre functions \(\varphi^{n-1}_{k,\lambda}\) are eigenfunctions of \(L_\lambda\) with eigenvalue \((2k+n)|\lambda|\). Moreover, the eigenspace of \(L_\lambda\) corresponding to eigenvalue \((2k+n)|\lambda|\) is an infinite-dimensional space with an orthonormal basis consisting of functions 
\begin{equation*}
    \Phi^\lambda_{\alpha,\beta}=(2\pi)^{-n/2}\langle \pi_\lambda (z,0)\Phi^\lambda_\alpha, \Phi^\lambda_\beta \rangle,
\end{equation*}
where \(\alpha,\beta \in \mathbb{N}^n,\;|\alpha|=k\) and \(\Phi^\lambda_\alpha\) are scaled Herimte functions which form eigenfunctions of the scaled Hermite operator \(H(\lambda)=-\Delta+\lambda^2|x|^2\). For a more comprehensive study of eigenfunctions, we refer the reader to \cite{thangavelu-uncertainty-book}. 

Now, let \(\Omega\) denote the Heisenberg fan, which is defined as the union of rays 
$$
R_k=\{(\lambda,(2k+n)|\lambda|): \lambda \in \mathbb{R}^*\},\; k=0,1,\cdots ,
$$
and the limiting ray 
$$
R_{\infty}=\{(0,\tau): \tau \geq 0\}.
$$
For a function \(f \in L^1(\mathbb{H}^n) \cap L^2(\mathbb{H}^n)\), its Strichartz Fourier transform is a function on \(\Omega \times \mathbb{C}^n\), defined as
\begin{equation}\label{sft def1}
    \widehat{f}(a,w)= \int\limits_{\mathbb{H}^n} f(z,t) e_a\left ((z,t)^{-1}(w,0) \right) dz\;dt,
\end{equation}
where \(a=(\lambda, (2k+n)|\lambda|) \in R_k\) and \(e_a(z,t)=e^{n-1}_{k,\lambda}(z,t)\).
Moreover, for element \((0,\tau)\in R_{\infty}\), 
\begin{equation}
    \widehat{f}(0,\tau,w)= (n-1)! 2^{n-1}\int\limits_{\mathbb{H}^n}f(z,t)\frac{J_{n-1}(\sqrt{\tau}|w-z|)}{{(\sqrt{\tau}|w-z|)}^{n-1}}dzdt,
\end{equation}
where \(J_{n-1}\) refers to the Bessel function of order \((n-1)\).

We recall the inversion formula and Plancherel theorem for the Strichartz Fourier transform. To state the theorem, one needs a measure on the set \(\Omega\). Consider the measure \(\nu\) on \(\Omega\) defined as 
\begin{equation}
    \int\limits_{\Omega} \phi(a)d\nu(a)=(2\pi)^{-2n-1}\int\limits_{-\infty}^{\infty}\left (\sum_{k=0}^{\infty}\phi(\lambda,(2k+n)|\lambda|) \right )|\lambda|^{2n}d\lambda.
\end{equation}
\begin{theorem}
    For \(f \in \mathcal{S}(\mathbb{H}^n)\), the inversion formula for the Strichartz Fourier transform \cite{Strichartz_Fourier_thangavelu} states that 
    \begin{equation}
        f(z,t)=\int\limits_{\Omega}\int\limits_{\mathbb{C}^n}\widehat{f}(a,w)e_a((-w,0)(z,t)dw\;d\nu(a).
    \end{equation}
Furthermore, for a function \(f\) in \(L^1 \cap L^2(\mathbb{H}^n)\), we have the following Plancherel formula
\begin{equation}
    \int\limits_{\mathbb{H}^n}|f(z,t)|^2dz\;dt=\int\limits_{\Omega}\int\limits_{\mathbb{C}^n}|\widehat{f}(a,w)|^2dw\;d\nu(a).
\end{equation}
\end{theorem}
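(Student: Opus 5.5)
The plan is to reduce both statements to the spectral decomposition of $f$ in the central variable and to the special Hermite expansion of each slice $f^{\lambda}$. The key observation is that, for $a=(\lambda,(2k+n)|\lambda|)\in R_k$, the definition \eqref{sft def1} can be rewritten as a twisted convolution. Writing $(z,t)^{-1}(w,0)=(w-z,-t-\tfrac12\operatorname{Im}(z\cdot\bar w))$ and $e_a=e^{i\lambda t}\varphi^{n-1}_{k,\lambda}(z)$, the $t$-integration produces $f^{-\lambda}$, up to the sign convention for $f^{\lambda}$. We then obtain
\[
\widehat f(a,w)=\int_{\mathbb{C}^n} f^{-\lambda}(z)\,\varphi^{n-1}_{k,\lambda}(w-z)\,e^{-\frac{i\lambda}{2}\operatorname{Im}(z\cdot\bar w)}\,dz
= f^{-\lambda}\ast_{\lambda}\varphi^{n-1}_{k,\lambda}(w).
\]
This is, up to the normalising constant $(2\pi)^{-n}|\lambda|^{n}$, the spectral projection $P_k(\lambda)f^{-\lambda}$ of $f^{-\lambda}$ onto the $(2k+n)|\lambda|$-eigenspace of $L_\lambda$. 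I will verify this identity first for $f\in\mathcal S(\mathbb{H}^n)$ and fix all constants there.

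For the Plancherel formula, the special Hermite expansion $g=(2\pi)^{-n}|\lambda|^n\sum_k g\ast_\lambda\varphi^{n-1}_{k,\lambda}$ is an orthogonal decomposition in $L^2(\mathbb{C}^n)$. It gives
\[
\|g\|_2^2=(2\pi)^{-2n}|\lambda|^{2n}\sum_k\|g\ast_\lambda\varphi^{n-1}_{k,\lambda}\|_2^2 .
\]
Applying this with $g=f^{-\lambda}$, integrating in $\lambda$ and using Euclidean Plancherel in $t$, namely $\int|f|^2\,dz\,dt=(2\pi)^{-1}\int\|f^{\lambda}\|_2^2\,d\lambda$, yields exactly the weight $(2\pi)^{-2n-1}|\lambda|^{2n}$ appearing in $\nu$. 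The ray $R_\infty$ has $\nu$-measure zero, so it contributes nothing. Density of $\mathcal S$ in $L^1\cap L^2$ then extends the identity.

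For the inversion formula with $f\in\mathcal S(\mathbb{H}^n)$, I expand $e_a((-w,0)(z,t))=e^{i\lambda t}e^{\frac{i\lambda}{2}\operatorname{Im}(\cdot)}\varphi^{n-1}_{k,\lambda}(z-w)$. This gives
\[
\int_{\mathbb{C}^n}\widehat f(a,w)\,e_a((-w,0)(z,t))\,dw=e^{i\lambda t}\,\big(f^{-\lambda}\ast_\lambda\varphi_{k,\lambda}\ast_\lambda\varphi_{k,\lambda}\big)(z).
\]
Next I use the reproducing identity $\varphi_{k,\lambda}\ast_\lambda\varphi_{k,\lambda}=(2\pi)^n|\lambda|^{-n}\varphi_{k,\lambda}$. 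Summing over $k$ with the factor $|\lambda|^{2n}(2\pi)^{-2n}$ reconstructs $f^{-\lambda}$ through the special Hermite expansion. Integrating $e^{i\lambda t}f^{-\lambda}(z)\,d\lambda/(2\pi)$ recovers $f(z,t)$ by one-dimensional Fourier inversion. After the change $\lambda\to-\lambda$, the sign conventions must be checked to match.

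The main obstacle is justifying the interchanges of sum and integrals, and convergence of the special Hermite series for Schwartz $f$. For this I will use the rapid decay in $k$ of $\|f^{\lambda}\ast_\lambda\varphi_{k,\lambda}\|_2$. This follows from applying powers of $\mathcal{L}$, since $L_\lambda^m$ acts by $((2k+n)|\lambda|)^m$ on the projections. I will combine it with the uniform bound $|\varphi_{k,\lambda}|\le C\,\tfrac{(k+n-1)!}{k!}$ and the fact that $f^{\lambda}$ is Schwartz in $(z,\lambda)$. This gives absolute convergence of the $\nu$-integral near $\lambda=0$, thanks to the factor $|\lambda|^{2n}$, and at infinity. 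The remaining care is the bookkeeping of the normalising constants in the twisted convolution identities.
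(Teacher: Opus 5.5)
\textbf{Verdict.} Your proposal is correct. The paper itself gives no proof of this theorem: it quotes it from \cite{Strichartz_Fourier_thangavelu}.

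\textbf{Comparison of routes.} The only trace of the underlying argument in the paper is the identity \eqref{strichartz relation formula}, $\int_{\mathbb{C}^n}|\widehat f(a,w)|^2dw=(2\pi)^n|\lambda|^{-n}\|\widehat f(\lambda)P_k(\lambda)\|_{HS}^2$. Through it, Plancherel reduces to the operator-valued Plancherel theorem on $\mathbb{H}^n$: summing over $k$ against $(2\pi)^{-2n-1}|\lambda|^{2n}d\lambda$ gives $(2\pi)^{-n-1}\int\|\widehat f(\lambda)\|_{HS}^2|\lambda|^n d\lambda$. Your route stays entirely on the $\mathbb{C}^n$ side. You use the twisted-convolution formula for $\widehat f(a,\cdot)$, orthogonality and the reproducing identity of the special Hermite projections, and Euclidean Plancherel and inversion in $t$. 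The two routes are equivalent through the Weyl transform. Yours avoids the Schr\"odinger representation and gives the inversion formula from the same computation. The group-Fourier route gets Plancherel directly from a known theorem. Your constants are right: after the $w$-integration, the $\nu$-integrand is $(2\pi)^{-1}e^{i\lambda t}$ times the $k$-th special Hermite projection $(2\pi)^{-n}|\lambda|^n f^{-\lambda}\ast\varphi^{n-1}_{k,\lambda}(z)$.

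\textbf{Two points to tighten.}

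First, the twist sign. Take the usual convention $F\ast_\lambda G(z)=\int F(z-w)G(w)e^{\frac{i\lambda}{2}\operatorname{Im}(z\cdot\bar w)}dw$, for which $(f\ast g)^\lambda=f^\lambda\ast_\lambda g^\lambda$ with the paper's group law. Then the integral you wrote equals $f^{-\lambda}\ast_{-\lambda}\varphi^{n-1}_{k,\lambda}(w)=\varphi^{n-1}_{k,\lambda}\ast_{\lambda}f^{-\lambda}(w)$, not $f^{-\lambda}\ast_\lambda\varphi^{n-1}_{k,\lambda}(w)$. This agrees with the paper's description of $L^2_0$, where $\varphi^{n-1}_{k,\lambda}$ acts by left $\lambda$-twisted convolution. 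The slip is harmless, because $\varphi^{n-1}_{k,-\lambda}=\varphi^{n-1}_{k,\lambda}$ and the same twist appears in the inversion kernel. The reproducing identity is therefore applied with a consistent sign, but you should fix the formula.

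Second, convergence near $\lambda=0$. Powers of $\mathcal{L}$ give decay in $(2k+n)|\lambda|$, not in $k$ uniformly in $\lambda$. Suppose you bound $|f^{-\lambda}\ast\varphi^{n-1}_{k,\lambda}(z)|$ by $\binom{k+n-1}{k}((2k+n)|\lambda|)^{-m}\|\mathcal{L}^m f\|_1$ for all $k$, with a single integer $m>n$. After the factor $|\lambda|^n$ you are left with $|\lambda|^{n-m}$, which is not integrable at $0$. Instead, split the $k$-sum at $(2k+n)|\lambda|\approx 1$:
\begin{itemize}
\item below the split, use $m=0$;
\item above it, use $m>n$.
\end{itemize}
Each part is then $O(1)$ after multiplying by $|\lambda|^n$. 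This $|\lambda|^n$ is the $|\lambda|^{2n}$ of $\nu$ combined with the $|\lambda|^{-n}$ from the reproducing identity, and it is the precise sense in which that factor saves you. With this estimate, the special Hermite series converges absolutely and uniformly in $z$. Identifying its sum with $f^{-\lambda}(z)$ and applying one-dimensional Fourier inversion then goes through as you describe.
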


The normalised Strichartz Fourier transform \(\tilde{f}\) is defined as 
$$
\tilde{f}(a,w)=\frac{k!(n-1)!}{(k+n-1)!}\widehat{f}(a,w),
$$
and
$$
\tilde{f}(0,\tau,w)=\widehat{f}(0,\tau,w).
$$
Using the following estimates of the Laguerre function and the Bessel function (see \cite{handbook,szego})
$$
\frac{k!(n-1)!}{(k+n-1)!}|\varphi^{n-1}_{k,\lambda}(z)| \leq 1 \text{ and } |J_{n-1}(t)| \leq c_n t^{n-1},
$$
one obtains that for any \(f \in L^1(\mathbb{H}^n)\), the normalized Strichartz Fourier transform satisfies 
\begin{equation}\label{L1 to L inf bdd}
  \sup_{(a,w)\in \Omega \times \mathbb{C}^n} |\tilde{f}(a,w)|\leq c_{n,1}\lVert f\rVert_1. 
\end{equation}

Furthermore, restating the Plancherel formula in terms of the normalized Strichartz Fourier transform, we have 
\begin{equation}\label{plancherel}
    \int\limits_{\mathbb{H}^n}|f(z,t)|^2dz\;dt=\int\limits_{\Omega}\int\limits_{\mathbb{C}^n}|\tilde{f}(a,w)|^2dw\;d\nu_2(a), 
\end{equation}
where the measure \(d\nu_2(a)\) is a measure on \(\Omega\) given by 
$$
\int\limits_{\Omega} \phi(a)d\nu_2(a)=(2\pi)^{-2n-1}\int\limits_{-\infty}^{\infty}\left (\sum_{k=0}^{\infty}\left ( \frac{(k+n-1)!}{k!(n-1)!}\right )^2\phi(\lambda,(2k+n)|\lambda|) \right )|\lambda|^{2n}d\lambda.
$$
It is important to note that the Strichartz Fourier transform is not a unitary operator from \(L^2(\mathbb{H}^n)\) onto \(L^2(\Omega \times \mathbb{C}^n, d\nu\;dw)\), instead if we consider the space \(L_0^2(\Omega \times \mathbb{C}^n, d\nu\;dw)\) as the subspace of \(L^2(\Omega \times \mathbb{C}^n, d\nu\;dw)\) consisting of functions satisfying the property
\begin{equation}
    (2\pi)^{-n}|\la|^n\varphi^{n-1}_{k,\la} \ast_\la\widehat{f}(a,\cdot)(z)=\widehat{f}(a,z),\; a\in R_k
\end{equation}
then we have the following result.

\begin{theorem}
    The Strichartz Fourier transform is a unitary operator from \(L^2(\mathbb{H}^n)\) onto \(L_0^2(\Omega \times \mathbb{C}^n, d\nu\;dw)\).
\end{theorem}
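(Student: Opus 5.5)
The plan is to write the Strichartz Fourier transform in terms of the group Fourier transform. For $a=(\lambda,(2k+n)|\lambda|)\in R_k$ I will show that $\widehat f(a,\cdot)$ is, up to a constant, a twisted convolution of $f^{\lambda}$ with $\varphi^{n-1}_{k,\lambda}$, i.e. the $k$-th spectral projection of $f^\lambda$ for the special Hermite operator $L_\lambda$. Concretely, unwinding $(z,t)^{-1}(w,0)=(w-z,-t-\tfrac12\operatorname{Im}(z\cdot\bar w))$ in \eqref{sft def1} and integrating first in $t$ gives
\[
\widehat f(a,w)=\int_{\mathbb{C}^n} f^{-\lambda}(z)\,\varphi^{n-1}_{k,\lambda}(w-z)\,e^{-\frac{i\lambda}{2}\operatorname{Im}(z\cdot\bar w)}\,dz
= f^{-\lambda}\ast_{\lambda}\varphi^{n-1}_{k,\lambda}(w),
\]
with the sign convention for $\lambda$ absorbed into the twisted convolution $\ast_\lambda$.

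Next I will invoke the special Hermite expansion from \cite{thangaveluheisenberg}. For $g\in L^2(\mathbb{C}^n)$ one has
\[
g=(2\pi)^{-n}|\lambda|^n\sum_k g\ast_\lambda\varphi^{n-1}_{k,\lambda},
\]
where the operators $P_k g=(2\pi)^{-n}|\lambda|^n g\ast_\lambda\varphi^{n-1}_{k,\lambda}$ are mutually orthogonal projections onto the eigenspaces of $L_\lambda$. The projection property is the identity
\[
\varphi^{n-1}_{k,\lambda}\ast_\lambda\varphi^{n-1}_{j,\lambda}=\delta_{jk}(2\pi)^n|\lambda|^{-n}\varphi^{n-1}_{k,\lambda}.
\]
Two things follow.

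\emph{Isometry.} Combining the projection identities with Plancherel in $t$ yields the Plancherel formula recalled above, so $f\mapsto\widehat f$ is an isometry into $L^2(\Omega\times\mathbb{C}^n,d\nu\,dw)$. The limiting ray $R_\infty$ has $\nu$-measure zero and can be ignored.

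\emph{Range in $L^2_0$.} Since $P_k^2=P_k$ and $P_k$ commutes with twisted convolution on the appropriate side, each $\widehat f(a,\cdot)$ satisfies the reproducing identity defining $L^2_0$. Hence the image lies in $L^2_0(\Omega\times\mathbb{C}^n,d\nu\,dw)$. I will check this on $\mathcal S(\mathbb H^n)$ and extend by density and continuity, which is legitimate because $L^2_0$ is closed: it is the range of the bounded projection $F\mapsto (2\pi)^{-n}|\lambda|^n\varphi^{n-1}_{k,\lambda}\ast_\lambda F(a,\cdot)$ acting fibrewise.

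The main obstacle is surjectivity. Given $F\in L^2_0$, I will define $f$ by the inversion formula,
\[
f(z,t)=\int_\Omega\int_{\mathbb{C}^n}F(a,w)\,e_a((-w,0)(z,t))\,dw\,d\nu(a),
\]
first for $F$ compactly supported in $(\lambda,k)$ with Schwartz fibres, and then compute $\widehat f$. The inner $w$-integral is $e^{i\lambda t}\,F(a,\cdot)\ast_\lambda\varphi^{n-1}_{k,\lambda}(z)$, so $f^{\lambda}=c\,|\lambda|^{n}\sum_k F(a_k,\cdot)\ast_\lambda\varphi_{k,\lambda}$. Applying the forward transform and using orthogonality of projections for different $k$ returns $P_kF(a,\cdot)$, which equals $F(a,\cdot)$ by the $L^2_0$ condition. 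The delicate points are the normalizing constants, the $|\lambda|^{2n}$ in $\nu$ matching $|\lambda|^n\cdot|\lambda|^n$ from the two projections, and the sign $\lambda\leftrightarrow-\lambda$. Once these are verified, the image is dense in $L^2_0$. Since an isometry has closed range, the transform is onto $L^2_0$.
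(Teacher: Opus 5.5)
The paper does not prove this statement. It is recalled from \cite{Strichartz_Fourier_thangavelu} without argument, so there is no in-paper proof to measure yours against. Judged on its own, your outline is correct.

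\textbf{Checking the steps.} Unwinding the group law gives $\widehat f(a,\cdot)=\varphi^{n-1}_{k,\lambda}\ast_\lambda f^{-\lambda}=f^{-\lambda}\ast_{-\lambda}\varphi^{n-1}_{k,\lambda}$, for the convention in which $(f\ast g)^\lambda=f^\lambda\ast_\lambda g^\lambda$. This is consistent with the reproducing identity defining $L^2_0$, where $\varphi^{n-1}_{k,\lambda}$ sits on the left. Plancherel in $t$ together with the orthogonal special Hermite decomposition gives the isometry with exactly the measure $\nu$. $L^2_0$ is closed, being the fixed-point set of a fibrewise norm-one projection. The inversion computation then returns $P_kF(a,\cdot)=F(a,\cdot)$.

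\textbf{Comparison with the source.} The only trace of the source's argument in the paper is \eqref{strichartz relation formula}. It expresses $\int_{\mathbb{C}^n}|\widehat f(a,w)|^2\,dw$ through $\lVert\widehat f(\lambda)P_k(\lambda)\rVert_{HS}^2$, i.e.\ the same fibrewise decomposition read on the operator side via the Weyl transform. Read that way, surjectivity is inherited from the operator-valued Plancherel theorem, because $\lVert T\rVert_{HS}^2=\sum_k\lVert TP_k(\lambda)\rVert_{HS}^2$. Your version stays on $\mathbb{C}^n$, is self-contained, and works directly with the twisted-convolution form in which $L^2_0$ is defined.

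\textbf{Two small repairs.}

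First, your displayed formula for $f^\lambda$ has the bookkeeping off. The inversion integral gives $f^{-\lambda}=(2\pi)^{-2n}|\lambda|^{2n}\sum_k\varphi^{n-1}_{k,\lambda}\ast_\lambda F(a_k(\lambda),\cdot)$, where $a_k(\lambda)=(\lambda,(2k+n)|\lambda|)$. The power drops to $|\lambda|^n$ only after the $L^2_0$ condition is used. This matches your later remark about splitting $|\lambda|^{2n}$ between the two projections.

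Second, and more importantly, you claim the image is dense in $L^2_0$ after checking only nice $F$, but you never say why nice elements of $L^2_0$ are dense in $L^2_0$. This follows by applying the fibrewise projection to a nice approximant of a truncation of $F$: twisted convolution with $\varphi^{n-1}_{k,\lambda}$ preserves Schwartz fibres, and the projection has norm one.

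\textbf{A shortcut.} You can bypass density, closed range and the inversion integral altogether. The identity $\widehat f(a,\cdot)=\varphi^{n-1}_{k,\lambda}\ast_\lambda f^{-\lambda}$ extends to all $f\in L^2(\mathbb{H}^n)$ by continuity. For $F\in L^2_0$, the series $g^\lambda=(2\pi)^{-n}|\lambda|^n\sum_kF(a_k(\lambda),\cdot)$ has mutually orthogonal terms, with $(2\pi)^{-1}\int_{\mathbb{R}}\lVert g^\lambda\rVert_2^2\,d\lambda=\lVert F\rVert_{L^2(d\nu\,dw)}^2$. Defining $f$ by $f^{-\lambda}=g^\lambda$ (Plancherel in $t$) then produces an explicit preimage of $F$.
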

The Pitt's inequality is formulated within the framework of the Hausdorff--Young inequality.  In the framework of the Strichartz Fourier transform, the Hausdorff–Young inequality, as established in   \cite{Strichartz_Fourier_thangavelu},  can be stated as follows.
\begin{theorem}
    Given \(1\leq p \leq 2\), there is a measure \(\nu_p\) on the set \(\Omega\) such that for every \(f \in L^p(\mathbb{H}^n)\), 
    \begin{equation}\label{H-Y inequality}
\left (\int\limits_{\Omega}\int\limits_{\mathbb{C}^n}|\tilde{f}(a,w)|^{p'}dw\;d\nu_p(a) \right )^{1/p'} \leq c_{n,p}\left(\int\limits_{\mathbb{H}^n}|f(z,t)|^pdz\;dt \right)^{1/p}.
    \end{equation}
\end{theorem}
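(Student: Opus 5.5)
The plan is to prove the Hausdorff--Young inequality \eqref{H-Y inequality} by interpolating between the endpoint cases $p=1$ and $p=2$. The natural tool is the Riesz--Thorin theorem, or more precisely the Stein--Weiss version with change of measure. The only subtlety is that the two endpoint estimates hold with respect to different measures on $\Omega$.

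\textbf{Endpoint $p=1$.} The uniform bound \eqref{L1 to L inf bdd} gives $\|\tilde f\|_{L^\infty(\Omega\times\mathbb{C}^n)}\le c_{n,1}\|f\|_1$. This holds with respect to any measure on $\Omega\times\mathbb{C}^n$, in particular with respect to $d\nu_2\,dw$.

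\textbf{Endpoint $p=2$.} The Plancherel identity \eqref{plancherel} says that $f\mapsto\tilde f$ is an isometry from $L^2(\mathbb{H}^n)$ into $L^2(\Omega\times\mathbb{C}^n,d\nu_2\,dw)$.

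With both endpoints referred to the single measure $d\nu_2\,dw$, the linear operator $f\mapsto\tilde f$ is bounded $L^1\to L^\infty$ and $L^2\to L^2$. Riesz--Thorin then gives, for $1\le p\le2$ with $1/p=(1-\theta)+\theta/2$,
\[
\Big(\int_\Omega\int_{\mathbb{C}^n}|\tilde f(a,w)|^{p'}\,dw\,d\nu_2(a)\Big)^{1/p'}\le c_{n,1}^{1-\theta}\,\|f\|_p .
\]
So one may take $\nu_p=\nu_2$ and $c_{n,p}=c_{n,1}^{2/p-1}$, which proves the theorem.

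If one prefers the measure to vary with $p$, the plan is to use Stein--Weiss interpolation with a change of measure. One would take the density with respect to $d\lambda$ on each ray $R_k$ to be $|\lambda|^{2n}\big(\tfrac{(k+n-1)!}{k!(n-1)!}\big)^{2(2-p')/p'}$, or a similar power dictated by the weights. One then checks that this $\nu_p$ interpolates between the two endpoint measures. The convenient choice is $\nu_1=\nu_2$, so that the $L^\infty$ endpoint is insensitive to it.

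\textbf{Main obstacle.} The delicate points are technical rather than conceptual. First, one must define $\tilde f$ for $f\in L^p$ with $1<p<2$. This is done by applying the estimate on the dense subspace $L^1\cap L^2$ and extending by continuity. Second, one must make sure the $L^\infty$ bound \eqref{L1 to L inf bdd} covers both the rays $R_k$ and the limiting ray $R_\infty$. The limiting ray carries $\nu_2$-measure zero, so it is harmless.
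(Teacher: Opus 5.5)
Your argument is correct. Because \eqref{L1 to L inf bdd} is a pointwise bound, it is an $L^\infty$ bound for any measure on $\Omega\times\mathbb{C}^n$. In any case $\nu$ and $\nu_2$ differ on each ray $R_k$ only by the positive factor $\bigl(\frac{(k+n-1)!}{k!(n-1)!}\bigr)^2$, so they have the same null sets. Plain Riesz--Thorin against \eqref{plancherel} then gives the inequality with $\nu_p=\nu_2$ and $c_{n,p}=c_{n,1}^{2/p-1}$. Since the theorem only asserts that some measure $\nu_p$ exists, this proves it.

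The paper starts from the same two endpoints, $L^1\to L^\infty(d\nu\,dw)$ and $L^2\to L^2(d\nu_2\,dw)$. It then appeals to the interpolation theorem with change of measures, following Thangavelu's paper, to obtain a $p$-dependent $\nu_p$, which it describes as sharper. What you obtain is precisely the second inequality \eqref{H-Y without change of measures}, which the paper quotes from Remark 7.2.1 of that reference and adopts for its Pitt-type analysis. Your route is more elementary and yields an explicit constant; the paper's route is aimed at the finer $p$-dependent measure.

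You should drop or repair your optional aside about a varying measure. The density $|\lambda|^{2n}\bigl(\frac{(k+n-1)!}{k!(n-1)!}\bigr)^{2(2-p')/p'}$ gives $\nu$ rather than $\nu_2$ at $p=2$. As $p\to1$ it degenerates to $\bigl(\tfrac{k!(n-1)!}{(k+n-1)!}\bigr)^{2}d\nu$. So it does not interpolate between the endpoint measures as claimed, and it contradicts your own choice $\nu_1=\nu_2$. The inequality with that measure holds only because the measure is dominated by $\nu_2$, so the aside adds nothing to your main proof.
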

This is established using the fact that the Strichartz Fourier transform is bounded from \(L^1(\mathbb{H}^n)\) into \(L^{\infty}(\Omega \times \mathbb{C}^n, d\nu(a)\;dw)\), and is also bounded from \(L^2(\mathbb{H}^n)\) into \(L^2(\Omega \times \mathbb{C}^n, d\nu_2(a)\;dw)\) and then use the interpolation theorem with change of measures. 
Using \cite[Remark 7.2.1]{Strichartz_Fourier_thangavelu}, we also have that 
\begin{equation}\label{H-Y without change of measures}
\left (\int\limits_{\Omega}\int\limits_{\mathbb{C}^n}|\tilde{f}(a,w)|^{p'}dw\;d\nu_2(a) \right )^{1/p'} \leq c'_{n,p}\left(\int\limits_{\mathbb{H}^n}|f(z,t)|^pdz\;dt \right)^{1/p}.
    \end{equation}
While the inequality in \eqref{H-Y inequality} provides a sharper estimate than \eqref{H-Y without change of measures}, for the purpose of deriving an analogue of Pitt’s inequality we shall adopt \eqref{H-Y without change of measures} as the basis of our analysis. To make the paper self-contained we recall here the decreasing rearrangement and Lorentz spaces.
\subsection{Decreasing rearrangement and Lorentz spaces}
Let \((X,\mu)\) be a measure space and \(f\) be a measurable function on \(X\). The \emph{distribution function} of \(f\), denoted by \(d_f\) is a function on \([0,\infty)\) defined as 
$$
d_{f}(M)=\mu \left \{x: |f(x)|>M\right \}.
$$
\noindent Given a measurable function \(f\), its \emph{decreasing rearrangement} \(f^*\) is a function defined on \([0,\infty)\) which is decreasing and satisfies \(d_{f}=d_{f^*}\). Explicitly, for \(t \geq 0\), 
$$
f^*(t)=\inf \left \{s>0:d_{f}(s)\leq t\right \}.
$$
For details and further properties of decreasing rearrangement, we refer the reader to \cite{grafakos_book}. Having introduced decreasing rearrangements of functions, we move to the definition of the Lorentz spaces.
\begin{defn}
    Let \(f\) be a measurable function on \((X,\mu)\) and \(0 <p,q \leq \infty\), define
    $$
    \lVert f \rVert_{L^{p,q}}=\begin{cases}
    \left (\displaystyle\int\limits_{0}^{\infty}\left( t^{1/p}f^*(t)\right)^q\frac{dt}{t} \right)^{1/q}\; &\textrm{if}\; q< \infty \\
    \sup\limits_{t>0}t^{1/p}f^*(t)\; &\textrm{if}\; q=\infty.
\end{cases} .
    $$
    The set of all the functions \(f\) for which \({\lVert f \rVert}_{L^{p,q}}< \infty\) is called the Lorentz space with indices \(p\) and \(q\) and is denoted by \(L^{p,q}\).
\end{defn}
We state a few properties (check details in \cite{bennett's_book,grafakos_book}) of the decreasing rearrangement and Lorentz space, which will be used in further sections. For two measurable functions \(f,g\) on \((X,\mu)\), 
\begin{equation}\label{norm in dec arr}
\displaystyle\int\limits_{X}|f|^pd\mu=\displaystyle\int\limits_{0}^{\infty}(f^*(t))^pdt\;\; \text{ for } 0<p<\infty,
\end{equation}
and
\begin{equation}\label{two func in dec arr}
\left (\int\limits_{0}^{\infty} \left(\frac{1}{(1/g)^*(t)}f^*(t)\right)^p dt\right )^{1/p} \leq \left (\int\limits_{X}|f(x)g(x)|^pd\mu \right)^{1/p}.
\end{equation}
The second inequality can be found in \cite{heinig_1984}.
The containment of Lorentz spaces is
as follows
$$
\text{for } 0<p \leq \infty \text{ and } 0<q <r \leq \infty, L^{p,q} \subseteq L^{p,r}.
$$
\subsection{Calder\'on's operator and Weighted Hardy's inequalities}
We now recall a few results related to Calder\'on's operator \cite{calderon66} and weighted Hardy's inequalities \cite{bradley-weighted-hardy}. Given \(1 \leq p_1 < p_2 \leq \infty\) and \(1 \leq q_1,q_2 \leq \infty\), where \(q_1 \neq q_2\), consider the segment \(\sigma\) with end points \(\left ( \frac{1}{p_1}, \frac{1}{q_1}\right)\) and \(\left (\frac{1}{p_2}, \frac{1}{q_2} \right)\). For each such segment, we define its slope as $$
m=\frac{1/q_1-1/q_2}{1/p_1-1/p_2}.
$$
Furthermore, associated to each such segment we have the corresponding \emph{Calder\'on operator} \(S_{\sigma}\) whose domain is the set of non-negative measurable functions \(f\) on \((0,\infty)\). For each \(t>0\), \(S_{\sigma}\) is defined as 
\begin{equation}\label{S_sigma equation}
S_{\sigma}f(t)=t^{-1/q_1}\int\limits_{0}^{t^m}s^{1/p_1-1}f(s)ds+t^{-1/q_2}\int\limits_{t^m}^{\infty}s^{1/p_2-1}f(s)ds.
\end{equation}
Let \((X,\mu)\) and \((Y,\nu)\) be two \(\sigma\)-finite measure spaces. A subilinear operator \(T\) acting on \(\mu\) measurable functions taking values in \(\nu\) measurable functions is said to be of \emph{joint weak type} \((p_1,q_1;p_2,q_2)\) if 
\begin{equation}
    (Tf)^*(t) \leq C S_{\sigma}f^*(t),\; \forall f \text{ and } t \in (0,\infty).
\end{equation}
Also, the operator \(T\) is said to be of \emph{separate weak types} \((p_1,q_1)\) and \((p_2,q_2)\) if T is bounded from \(L^{p_1,1} \text{ to } L^{q_1,\infty}\) and is bounded from
\(L^{p_2,1} \text{ to } L^{q_2,\infty}\). The following result \cite{bennett's_book,calderon66} gives the relation between \emph{joint weak type} and \emph{separate weak types}.

\begin{thm}\label{calderon's operator theorem}
    Let \(T\) be a sublinear operator on \(L^{p_1,1}(X,\mu)+L^{p_2,1}(X,\mu)\) taking values in the space of measurable functions on \((Y,\nu)\). Then T is of separate weak types \((p_1,q_1)\) and \((p_2,q_2)\) iff \(T\) is of joint weak type \((p_1,q_1;p_2,q_2)\).
\end{thm}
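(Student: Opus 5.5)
The plan is to prove the two implications separately: the forward direction by a level-dependent truncation of \(f\), and the converse by checking that the Calder\'on operator \(S_{\sigma}\) itself is of the two separate weak types. Throughout I use the normalization \(\lVert g\rVert_{L^{p,1}}=\int_0^\infty s^{1/p-1}g^*(s)\,ds\) from the definition above. I also use the elementary facts \((g+h)^*(t)\le g^*(t/2)+h^*(t/2)\) and \(|Tg|\le|T(g_1)|+|T(g_2)|\) for \(g=g_1+g_2\), which follow from sublinearity.

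\textbf{Separate \(\Rightarrow\) joint.} Fix \(t>0\), put \(s=t^m\), and split \(f\) at height \(f^*(s)\). Let \(f_s=\operatorname{sgn}(f)\min(|f|,f^*(s))\) and \(f^s=f-f_s\). Then
\[
(f^s)^*=(f^*-f^*(s))_+\chi_{[0,s)}, \qquad (f_s)^*=\min(f^*,f^*(s)).
\]
Sublinearity and the two weak type hypotheses give
\[
(Tf)^*(t)\le (Tf^s)^*(t/2)+(Tf_s)^*(t/2)\le C\,t^{-1/q_1}\lVert f^s\rVert_{L^{p_1,1}}+C\,t^{-1/q_2}\lVert f_s\rVert_{L^{p_2,1}}.
\]
The first norm is at most \(\int_0^{t^m}u^{1/p_1-1}f^*(u)\,du\), which is exactly the first term of \(S_\sigma f^*(t)\). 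The second norm equals \(p_2 s^{1/p_2}f^*(s)+\int_s^\infty u^{1/p_2-1}f^*(u)\,du\). The integral is the second term of \(S_\sigma f^*(t)\). The boundary term is absorbed into the first term of \(S_\sigma f^*(t)\) using two facts:
\[
f^*(s)\,s^{1/p_1}\le p_1^{-1}\int_0^s u^{1/p_1-1}f^*(u)\,du,
\]
which holds because \(f^*\) is decreasing, and the definition of the slope, which gives
\[
s^{1/p_2-1/p_1}=t^{-m(1/p_1-1/p_2)}=t^{-(1/q_1-1/q_2)}.
\]
Together these yield \(t^{-1/q_2}s^{1/p_2}f^*(s)\le p_1^{-1}t^{-1/q_1}\int_0^{t^m}u^{1/p_1-1}f^*(u)\,du\). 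This proves \((Tf)^*\le C\,S_\sigma f^*\). If \(p_2=\infty\), I replace the \(L^{\infty,1}\) norm by the \(L^\infty\) norm, and the same computation goes through with \(\lVert f_s\rVert_\infty=f^*(s)\).

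\textbf{Joint \(\Rightarrow\) separate.} Since \((Tf)^*\le C\,S_\sigma f^*\), it suffices to show \(\sup_t t^{1/q_i}S_\sigma g(t)\le C\lVert g\rVert_{L^{p_i,1}}\) for decreasing \(g\ge0\). I would write \(g=\int_0^\infty \chi_{[0,r)}\,d(-g)(r)\) as a superposition of characteristic functions. Because \(S_\sigma\) is positive and linear, this reduces the claim to \(t^{1/q_i}S_\sigma\chi_{[0,r)}(t)\le C r^{1/p_i}\) for \(i=1,2\), uniformly in \(t\) and \(r\). To justify this reduction I would apply Fubini directly inside the supremum, bounding \(t^{1/q_i}S_\sigma g(t)\) pointwise in \(t\). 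This avoids needing a norm on \(L^{q,\infty}\), and so it also covers the case \(q_i=1\).

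The main obstacle is the explicit computation of \(S_\sigma\chi_{[0,r)}\) in the two regimes \(t^m\ge r\) and \(t^m<r\). For instance, when \(t^m\ge r\) one gets \(S_\sigma\chi_{[0,r)}(t)=p_1 t^{-1/q_1}r^{1/p_1}\). When \(t^m<r\) the two terms are \(p_1t^{m/p_1-1/q_1}\) and \(p_2t^{-1/q_2}(r^{1/p_2}-t^{m/p_2})\). In each regime the bound must be checked using the slope relation, and the exponent comparisons depend on the sign of \(m\) (that is, whether \(q_1<q_2\) or \(q_1>q_2\)). I would treat the cases \(m>0\) and \(m<0\) separately, and the endpoint \(p_2=\infty\) by the same computation with \(r^{1/p_2}\) replaced by \(1\).
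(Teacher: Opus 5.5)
Your separate $\Rightarrow$ joint argument is correct. It is the standard argument behind the result; the paper gives no proof and only cites Calder\'on and Bennett--Sharpley. You truncate at height $f^*(t^m)$, send the large part to the $(p_1,q_1)$ bound and the bounded part to the $(p_2,q_2)$ bound, and absorb the boundary term $t^{-1/q_2}s^{1/p_2}f^*(s)$ into the first term of $S_\sigma f^*(t)$ via the slope identity. This is the only direction the paper uses ($p_1=1$, $q_1=\infty$, $p_2=q_2=2$). Your $L^\infty$ variant of it at $p_2=\infty$ is also fine.

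The genuine problem is your converse at the endpoint $p_2=\infty$. The claim that it follows ``by the same computation with $r^{1/p_2}$ replaced by $1$'' is false. With $1/p_2=0$, the second term of $S_\sigma\chi_{[0,r)}(t)$ is $t^{-1/q_2}\int_{t^m}^{r}s^{-1}\,ds$. Since $m/p_1=1/q_1-1/q_2$, you get for $t^m<r$
\[
t^{1/q_2}S_\sigma\chi_{[0,r)}(t)=p_1+\log\bigl(r/t^m\bigr),
\]
which is unbounded as $t^m\to0$, although $\lVert\chi_{[0,r)}\rVert_\infty=1$. Equivalently, $p_2(r^{1/p_2}-t^{m/p_2})\to\log(r/t^m)$ as $p_2\to\infty$.

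No repair is possible, because under your convention the implication itself is false. On $(0,\infty)$ take $(p_1,q_1;p_2,q_2)=(1,1;\infty,\infty)$, so $m=1$ and $S_\sigma g(t)=\frac1t\int_0^t g+\int_t^\infty g(s)\,\frac{ds}{s}$. Let $Tf(t)=\int_t^\infty|f(s)|\,\frac{ds}{s}$. Then:
\begin{itemize}
\item $T$ is sublinear, and $Tf$ is decreasing, so $(Tf)^*=Tf$.
\item The Hardy--Littlewood inequality with the weight $s^{-1}\chi_{(t,\infty)}(s)$, whose rearrangement is $(t+u)^{-1}$, gives $Tf(t)\le\int_0^\infty f^*(u)\,\frac{du}{t+u}\le S_\sigma f^*(t)$. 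So $T$ is of joint weak type.
\item Yet $T\chi_{(0,r)}(t)=\log(r/t)$ for $t<r$, so $T$ is not bounded on $L^\infty=L^{\infty,\infty}$.
\end{itemize}
If instead $L^{\infty,1}$ is read literally, it is $\{0\}$. The converse is then trivial, but your forward argument breaks, since $f_s\notin L^{\infty,1}$. So you should restrict the converse to $p_2<\infty$, which is harmless for the paper.

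For $p_2<\infty$ your superposition argument works, but the case analysis you call the main obstacle is unnecessary, and the sign of $m$ never enters. For any $g\ge0$ and $t>0$:
\begin{itemize}
\item on $s\ge t^m$, $s^{1/p_2-1/p_1}\le (t^m)^{1/p_2-1/p_1}=t^{1/q_2-1/q_1}$;
\item on $s<t^m$, $s^{1/p_1-1/p_2}\le t^{1/q_1-1/q_2}$.
\end{itemize}
Substituting these into the two terms of $S_\sigma g$ gives
\[
t^{1/q_i}S_\sigma g(t)\le\int_0^\infty s^{1/p_i-1}g(s)\,ds,\qquad i=1,2.
\]
With $g=f^*$ this yields both separate weak-type bounds at once. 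It needs no layer-cake decomposition, no monotonicity of $g$, and no split into $q_1<q_2$ versus $q_1>q_2$. At $p_2=\infty$ the right-hand side for $i=2$ becomes $\int_0^\infty g(s)\,\frac{ds}{s}$, which is exactly where the $L^\infty$ version fails.
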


We also need the following weighted Hardy inequalities, whose proof can be found in \cite{bradley-weighted-hardy}.
\begin{thm}\label{weighted hardy}
    Let \(1 \leq p \leq q \leq \infty\) and \(u_1,\;v_1,\;f_1\) be non-negative functions. Then
    \begin{enumerate}
        \item[(a)] We have 
        $$
        \left (\displaystyle\int\limits_{0}^{\infty}  \left [ u_1(t) \displaystyle\int\limits_{0}^tf_1(s)ds\right ]^qdt \right )^{1/q} \leq C \left ( \displaystyle\int\limits_{0}^{\infty}\left[v_1(t)f_1(t) \right]^p dt \right)^{1/p},
        $$
        if and only if
        $$
        \sup\limits_{s>0}\left ( \displaystyle\int\limits_{s}^{\infty} u_1(t)^q dt\right)^{1/q}\left ( \displaystyle\int\limits_{0}^{s} v_1(t)^{-p'} dt\right)^{1/p'} < \infty.
        $$
        \item[(b)] Also, 
        $$
        \left (\displaystyle\int\limits_{0}^{\infty}  \left [ u_1(t) \displaystyle\int\limits_{t}^{\infty} f_1(s)ds\right ]^qdt \right )^{1/q} \leq C \left ( \displaystyle\int\limits_{0}^{\infty}\left[v_1(t)f_1(t) \right]^p dt \right)^{1/p},
        $$
        holds if and only if,
        $$
        \sup\limits_{s>0}\left ( \displaystyle\int\limits_{0}^{s} u_1(t)^q dt\right)^{1/q}\left ( \displaystyle\int\limits_{s}^{\infty} v_1(t)^{-p'} dt\right)^{1/p'} < \infty.
        $$
    \end{enumerate}
\end{thm}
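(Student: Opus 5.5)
The plan is to prove part (a) directly and then deduce part (b) from it by the change of variables $t\mapsto 1/t$. Under this substitution the operator $\int_t^\infty f_1$ becomes a Hardy operator $\int_0^{1/t}$, and the weights transform so that the condition in (b) becomes the condition in (a). Throughout, write
\[
B=\sup_{s>0}\Big(\int_s^\infty u_1^q\Big)^{1/q}\Big(\int_0^s v_1^{-p'}\Big)^{1/p'}
\quad\text{and}\quad
h(t)=\int_0^t v_1(s)^{-p'}\,ds .
\]

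\emph{Necessity in (a).} Fix $s>0$ and test the inequality with $f_1=v_1^{-p'}\chi_{(0,s)}$, truncated if necessary so that $h(s)<\infty$.
\begin{itemize}
\item For $t\ge s$ we have $\int_0^t f_1 = h(s)$, so the left side is at least $h(s)\big(\int_s^\infty u_1^q\big)^{1/q}$.
\item The right side equals $C\,h(s)^{1/p}$, since $v_1^p v_1^{-pp'}=v_1^{-p'}$.
\end{itemize}
Dividing by $h(s)^{1/p}$ gives $\big(\int_s^\infty u_1^q\big)^{1/q}h(s)^{1/p'}\le C$. This is exactly the finiteness of the supremum, with $B\le C$. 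The endpoint cases $p=1$ (where $p'=\infty$ and $h$ is replaced by $\operatorname{ess\,sup} v_1^{-1}$) and $q=\infty$ are handled the same way with the obvious modifications.

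\emph{Sufficiency in (a) for $1<p\le q<\infty$.} Set $g=f_1v_1$ and fix an exponent $\alpha\in(0,1/p')$; the choice $\alpha=1/(pp')$ should work. Write $f_1=g\,h^{\alpha}\cdot v_1^{-1}h^{-\alpha}$ and apply Hölder's inequality:
\[
\int_0^t f_1\le\Big(\int_0^t g^p h^{\alpha p}\Big)^{1/p}\Big(\int_0^t v_1^{-p'}h^{-\alpha p'}\Big)^{1/p'}
\le c\,h(t)^{1/p'-\alpha}\Big(\int_0^t g^p h^{\alpha p}\Big)^{1/p}.
\]
The second factor is computed exactly because $dh=v_1^{-p'}\,dt$, and it is finite because $\alpha p'<1$. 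Raising to the power $q$ and integrating against $u_1^q$, we then use Minkowski's integral inequality in $L^{q/p}$, which is valid since $q/p\ge1$, to move the $ds$-integral outside. This reduces the claim to the pointwise estimate
\[
\int_s^\infty u_1(t)^q\,h(t)^{q(1/p'-\alpha)}\,dt\le C B^q\, h(s)^{-\alpha q}.
\]
This estimate is the main obstacle. I would prove it by a dyadic decomposition of $(s,\infty)$ according to the level sets $\{t: 2^j h(s)\le h(t)<2^{j+1}h(s)\}$. On each piece, the hypothesis gives $\int_{t_j}^\infty u_1^q\le B^q h(t_j)^{-q/p'}$, where $t_j$ is the left endpoint of the piece. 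Summing the resulting geometric series converges precisely because $\alpha>0$. Alternatively, one can integrate by parts in the Stieltjes sense against $d\big(-\int_t^\infty u_1^q\big)$.

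Inserting the pointwise estimate back in gives $\big(\int_0^\infty g^p h^{\alpha p}h^{-\alpha p}\big)^{1/p}=\|f_1v_1\|_p$, up to the constant $C B$. The endpoint cases $p=1$ or $q=\infty$ follow directly: for $p=1$ we bound $\int_0^t f_1\le \operatorname{ess\,sup}_{(0,t)}v_1^{-1}\int_0^t f_1v_1$ and apply Minkowski's inequality, and for $q=\infty$ Hölder's inequality alone suffices.
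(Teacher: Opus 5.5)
The paper does not prove this statement; it only cites Bradley. For $1<p\le q<\infty$ your argument is essentially Bradley's proof: Muckenhoupt's Hölder splitting with the factor $h^{1/(pp')}$, then Minkowski in $L^{q/p}$, then the tail bound $\int_s^\infty u_1^q h^{q/p'^2}\le p\,B^q h(s)^{-q/(pp')}$. Your Tonelli/integration-by-parts or dyadic argument does deliver that bound. Your necessity test is the standard one. The reduction of (b) to (a) by $t\mapsto 1/t$ is also correct: with $F(x)=x^{-2}f_1(1/x)$, (b) becomes (a) for $\tilde u(x)=u_1(1/x)x^{-2/q}$ and $\tilde v(x)=v_1(1/x)x^{2/p'}$, and the two conditions correspond exactly.

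\textbf{The gap: the endpoint $p=1$, $q<\infty$.} This case is included in the statement and used later in the paper. You first bound $\int_0^t f_1\le H(t)\int_0^t f_1v_1$ with $H(t)=\operatorname{ess\,sup}_{(0,t)}v_1^{-1}$, and only then apply Minkowski. That leaves you needing $\sup_s\int_s^\infty (u_1H)^q\,dt\lesssim B^q$, which is false in general.

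\emph{Counterexample.} Take $v_1(t)=t^{-a}$ and $u_1(t)=t^{-a-1/q}$ with $a>0$. Then $H(t)=t^a$ and $B=(aq)^{-1/q}<\infty$. But
\[
\int_s^\infty (u_1H)^q\,dt=\int_s^\infty t^{-1}\,dt=\infty .
\]
This is the borderline $\alpha=0$ of your main argument: the geometric series in your tail estimate degenerates into a logarithmically divergent one. So the case $p=1$ is not a routine modification of what you did.

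\emph{Fix.} Reverse the order of the two steps. Minkowski first gives $\big\|u_1\int_0^t f_1\big\|_{L^q}\le\int_0^\infty f_1(s)\big(\int_s^\infty u_1^q\big)^{1/q}\,ds$. The hypothesis gives $\big(\int_{s'}^\infty u_1^q\big)^{1/q}\le B\,v_1(s)$ for a.e.\ $s<s'$. Letting rational $s'\downarrow s$ and using monotone convergence yields $\big(\int_s^\infty u_1^q\big)^{1/q}\le B\,v_1(s)$ for a.e.\ $s$, hence the inequality with constant $B$.

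\emph{Necessity at $p=1$.} Here you also need a different test function, not a modification of $v_1^{-p'}\chi_{(0,s)}$. For example, take $f_1=|A|^{-1}\chi_A$, where $A\subset(0,s)$ is a set of positive measure on which $v_1^{-1}$ is within $\varepsilon$ of its essential supremum over $(0,s)$.
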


\section{Pitt's Inequality for Strichartz Fourier Transform}\label{sec 3}

In this section, we prove an analogue of Pitt's inequality for the Heisenberg group.
From this point, \(u\) and \(v\) denote two non-negative measurable functions on \(\Omega \times \mathbb{C}^n\) and \(\mathbb{H}^n\) respectively. Let \(U=u^*\) be the decreasing rearrangement of \(u\) with respect to the measure \(d\nu_2(a) \;dw\) on \(\Omega \times \mathbb{C}^n\) and \(v^*\) be the decreasing rearrangement of \(v\) with respect to measure \(dz\;dt\) on \(\mathbb{H}^n\). Take \(\frac{1}{V}=\left (\frac{1}{v}\right )^*\). In what follows, \(C\) refers to a constant which is independent of the choice of the function and may have different values for different inequalities.

\begin{theorem}\label{theorem-Pitts for Hn}
   Let \(1 \leq p \leq q \leq \infty\). If \(p< \infty\) and 
    \begin{equation}\label{pitts cond1}
        \sup_{s>0}\left ( \int\limits_{0}^{1/s}U(t)^qdt \right )^{1/q} \left ( \int\limits_{0}^{s}V(t)^{-p'}dt \right )^{1/p'} <\infty, 
    \end{equation}
    then there exists a constant \(C\) such that for all \(f \in \mathcal{S}(\mathbb{H}^n)\),
    \begin{equation}\label{pitts inequality Hn}
        \left( \int\limits_{\Omega \times \mathbb{C}^n}|\tilde{f}(a,w)|^q u(a,w)^q dw\;d\nu_2(a)  \right )^{1/q}\leq C \left( \int\limits_{\mathbb{H}^n}|{f}(z,t)|^p v(z,t)^p dz\;dt  \right )^{1/p}.
    \end{equation}
\end{theorem}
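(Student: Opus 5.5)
We follow Heinig's Euclidean scheme, with the Strichartz transform playing the role of the Fourier transform. Set \(T f=\tilde f\), a linear operator from functions on \((\mathbb{H}^n, dz\,dt)\) to functions on \((\Omega\times\mathbb{C}^n, d\nu_2\,dw)\). By \eqref{L1 to L inf bdd}, \(T\) is bounded from \(L^1\) to \(L^\infty\). By \eqref{plancherel}, \(T\) is bounded from \(L^2\) to \(L^2(d\nu_2\,dw)\). Since \(L^{1,1}=L^1\), \(L^{2,1}\subseteq L^2\) and \(L^2\subseteq L^{2,\infty}\), \(T\) is of separate weak types \((1,\infty)\) and \((2,2)\).

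By Theorem \ref{calderon's operator theorem}, \(T\) is then of joint weak type \((1,\infty;2,2)\). The segment \(\sigma\) has endpoints \((1,0)\) and \((1/2,1/2)\), so its slope is \(m=(0-1/2)/(1-1/2)=-1\). Hence
\[
(\tilde f)^*(t)\le C\Big(\int_0^{1/t} f^*(s)\,ds+t^{-1/2}\int_{1/t}^\infty s^{-1/2}f^*(s)\,ds\Big).
\]
(The \(L^1\)–\(L^\infty\) endpoint means \(q_1=\infty\), so the factor \(t^{-1/q_1}\) equals \(1\).) To justify using the theorem, note that \(\mathcal{S}(\mathbb{H}^n)\subset L^1+L^2\).

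Next we pass to rearrangements. A Hardy–Littlewood-type rearrangement inequality bounds the left side of \eqref{pitts inequality Hn} by \(\big(\int_0^\infty (U(t)(\tilde f)^*(t))^q\,dt\big)^{1/q}\); this uses that both functions are rearranged with respect to \(d\nu_2\,dw\). By \eqref{two func in dec arr}, the right side of \eqref{pitts inequality Hn} dominates \(\big(\int_0^\infty (V(t)f^*(t))^p\,dt\big)^{1/p}\). It therefore suffices to bound two one-dimensional operators from \(L^p(V^p)\) to \(L^q(U^q)\).

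For the first term, substitute \(t=1/y\):
\[
\int_0^\infty \Big(U(t)\int_0^{1/t}f^*\Big)^q dt=\int_0^\infty\Big(U(1/y)y^{-2/q}\int_0^y f^*\Big)^q dy.
\]
Theorem \ref{weighted hardy}(a) with \(u_1(y)=U(1/y)y^{-2/q}\) and \(v_1=V\) applies. Its condition is \(\sup_s\big(\int_s^\infty U(1/y)^q y^{-2}dy\big)^{1/q}\big(\int_0^s V^{-p'}\big)^{1/p'}<\infty\). Changing variables back gives \(\int_s^\infty U(1/y)^qy^{-2}dy=\int_0^{1/s}U^q\), so this is exactly \eqref{pitts cond1}.

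The second term is the main obstacle, and we treat it differently from the first. Substituting \(t=1/y\) gives \(\int_0^\infty \big(U(1/y)y^{1/2-2/q}\int_y^\infty s^{-1/2}f^*(s)\,ds\big)^q dy\). Theorem \ref{weighted hardy}(b) with \(f_1=s^{-1/2}f^*\) and \(v_1=s^{1/2}V\) would need \(\sup_s\big(\int_0^{1/s}U^q t^{q/2-2}\cdots\big)\) times \(\big(\int_s^\infty s^{-p'/2}V^{-p'}\big)^{1/p'}\). This condition does not follow directly from \eqref{pitts cond1}, so we avoid it and exploit monotonicity instead. Since \(f^*\) is decreasing,
\[
t^{-1/2}\int_{1/t}^\infty s^{-1/2}f^*(s)\,ds\le t^{-1/2}\int_{1/t}^\infty s^{-1/2}\cdot\frac{1}{s}\Big(\int_0^s f^*\Big)ds .
\]
We then use the monotonicity of \(U\) and \(1/V\) together with \eqref{pitts cond1}, and the assumption \((p,q)\ne(2,2)\) or \(p<\infty\), to dominate this term by the first one. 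We need to compute whether \(\int_{1/t}^\infty s^{-3/2}ds\sim t^{1/2}\) gives the bound \(\int_0^{1/t}f^*\) after averaging. If this domination fails, we fall back on Heinig's approach: split by whether \(q>2\) or \(p<2\), and verify the Hardy condition in (b) from \eqref{pitts cond1} using the decreasing property of \(U\) and the increasing property of \(V\). At that point the case \(p=q=2\) should be where an extra condition is needed, and we check whether the present statement implicitly allows it.

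Combining the two bounds gives \eqref{pitts inequality Hn}.
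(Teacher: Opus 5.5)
Your set-up is correct and matches the paper's. That covers Calder\'on's theorem with slope $m=-1$, the reduction to rearrangements (Hardy--Littlewood on the left, \eqref{two func in dec arr} on the right), and the first term via Theorem~\ref{weighted hardy}(a). The gap is the second term, which is the only nontrivial step, and the proposal does not handle it.

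Your main idea, dominating $t^{-1/2}\int_{1/t}^\infty s^{-1/2}f^*(s)\,ds$ by the first term, cannot work. With $F(s)=\int_0^s f^*$ increasing, your majorant satisfies $t^{-1/2}\int_{1/t}^\infty s^{-3/2}F(s)\,ds\ge 2F(1/t)$, so the averaging goes the wrong way. No pointwise domination exists either: for $f^*=\chi_{[0,R]}$ and $tR\gg1$ the first term is $1/t$ while the second is about $2(R/t)^{1/2}$. The second term is the $L^2$-endpoint piece and must be bounded as its own Hardy operator. What you need is that, for $(p,q)\neq(2,2)$, \eqref{pitts cond1} implies
\[
\sup_{s>0}\Big(\int_{1/s}^\infty U(t)^q\,t^{-q/2}\,dt\Big)^{1/q}\Big(\int_s^\infty t^{-p'/2}\,V(t)^{-p'}\,dt\Big)^{1/p'}<\infty .
\]
The paper imports this from Kumar--Pusti--Rana--Singh (Lemma~3.3 with $N=1$, $q_0=2$). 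Your fallback points in the right direction but is never carried out; it works by monotonicity alone. Let $\Phi(r)=\int_0^rU^q$ and $W(r)=\int_0^rV^{-p'}$; both are concave. Let $A$ be the supremum in \eqref{pitts cond1}, so $\Phi(1/r)^{1/q}W(r)^{1/p'}\le A$.
\begin{itemize}
\item If $p<2$: use $U(t)^q\le\Phi(t)/t$, $W(1/t)\ge W(s)/(st)$ for $t\ge 1/s$, and $V(t)^{-p'}\le W(s)/s$ for $t\ge s$.
\item If $q>2$: use $U(t)^q\le U(1/s)^q\le s\,\Phi(1/s)$ for $t\ge 1/s$. Then integrate $\int_s^\infty t^{-p'/2}\,dW(t)$ by parts, with $W(t)\le A^{p'}\Phi(1/s)^{-p'/q}(t/s)^{p'/q}$ for $t\ge s$.
\end{itemize}
Either way the supremum is at most $CA$. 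Since $p\le q$, one of $p<2$ or $q>2$ holds unless $p=q=2$.

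The case $p=q=2$ is part of the statement, so leaving it open is a second gap. There the estimates above degenerate into $\int^\infty t^{-1}\,dt$, which is why Heinig needs an extra condition in $\mathbb{R}^n$. The paper switches argument, following Gorbachev--Liflyand--Tikhonov. It proves the integrated bound $\int_0^s(\tilde f^*)^2\,dt\le C\int_0^s\big(\int_0^{1/t}f^*\big)^2dt$ for every $s>0$. Since $U^2$ is decreasing, Hardy's lemma upgrades this to $\int_0^\infty(\tilde f^*)^2U^2\le C\int_0^\infty\big(\int_0^{1/t}f^*\big)^2U^2$. This removes the second operator entirely, so only Theorem~\ref{weighted hardy}(a) and \eqref{pitts cond1} are needed.
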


\begin{proof}
    First consider the case when \(p <\infty\) and \((p,q)\neq (2,2)\).
   Consider the operator \(T(f)=\tilde{f}\) on \(L^1(\mathbb{H}^n)+L^{2,1} (\mathbb{H}^n)\). Note that \(L^{1,1}=L^1\) and \(L^{2,1} \subset L^{2,2}=L^2\). By \eqref{L1 to L inf bdd}, the operator \(T\) is strong type \((1,\infty)\) and so it is weak type \((1,\infty)\). Again from Plancherel theorem \eqref{plancherel}, \(T\) is weak type \((2,2)\).
  Applying Theorem \ref{calderon's operator theorem}, it follows that 
\begin{equation*}
   (Tf)^*(t) \leq C\, S_{\sigma} f^*(t),
\end{equation*}
for every \(t \in (0,\infty)\), where \(\sigma\) is the line segment with endpoints \((1,0)\) and \(\bigl(\tfrac{1}{2},\tfrac{1}{2}\bigr)\).

Now using the definition of operator \(S_{\sigma}\) as in \eqref{S_sigma equation}, we have 
\begin{equation}
   (\tilde{f})^*(t)= (Tf)^*(t)\leq C\left \{ \int\limits_{0}^{1/t}f^*(s)ds+ t^{-1/2}\int\limits_{1/t}^{\infty}s^{-1/2}f^*(s)ds \right \} 
\end{equation}

Multiplying both sides of the inequality by $U(t)$ and integrating with respect to $t$, we obtain
\begin{align}
   \left( \int\limits_{0}^{\infty} \left[ (\tilde{f})^*(t) U(t) \right]^q \, dt \right)^{1/q} 
   &\leq C \Bigg\{ 
   \left( \int\limits_{0}^{\infty} U(t)^q \left( \int\limits_{0}^{1/t} f^*(s)\, ds \right)^q dt \right)^{1/q} \nonumber\\
   &\quad+ \left( \int\limits_{0}^{\infty} U(t)^q t^{-q/2} 
   \left( \int\limits_{1/t}^{\infty} s^{-1/2} f^*(s)\, ds \right)^q dt \right)^{1/q} 
   \Bigg\}. \label{I_1+I_2}
\end{align}


Define  
\[
I_1 = \left( \int\limits_{0}^{\infty} U(t)^q \left( \int\limits_{0}^{1/t} f^*(s)\, ds \right)^q dt \right)^{1/q}.
\]
Applying the change of variables \(t \mapsto t^{-1}\), we get
\[
I_1 = \left( \int\limits_{0}^{\infty} U(t^{-1})^q\, t^{-2} 
        \left( \int\limits_{0}^{t} f^*(s)\, ds \right)^q dt \right)^{1/q}.
\]
Next, we invoke our hypothesis together with the weighted Hardy inequalities. 
In particular, by Theorem~\ref{weighted hardy}~(a), we obtain
\begin{equation}\label{estimate of I_1}
    I_1 \leq C \left( \int\limits_{0}^{\infty} \big( V(t) f^*(t) \big)^p \, dt \right)^{1/p},
\end{equation}
if and only if
\[
   \sup_{s>0} \left( \int\limits_{s}^{\infty} U(t^{-1})^q t^{-2} \, dt \right)^{1/q}
   \left( \int\limits_{0}^{s} V(t)^{-p'} \, dt \right)^{1/p'} < \infty.
\]

   
By a suitable change of variables, this condition coincides with our hypothesis \eqref{pitts cond1}.

   $$
   \sup\limits_{s>0}\left ( \displaystyle\int\limits_{0}^{1/s} U(t)^q dt\right)^{1/q}\left ( \displaystyle\int\limits_{0}^{s} V(t)^{-p'} dt\right)^{1/p'} < \infty.
   $$
   Now we work with the integral 
   $$
   I_2=\left[ \int\limits_{0}^{\infty}U(t)^q t^{-q/2} \left ( \int\limits_{1/t}^{\infty}s^{-1/2}f^*(s)ds\right)^qdt\right]^{1/q} .
   $$
   Again, by change of variables
   $$
   I_2=\left[ \int\limits_{0}^{\infty}U(t^{-1})^q t^{q/2}t^{-2} \left ( \int\limits_{t}^{\infty}s^{-1/2}f^*(s)ds\right)^qdt\right]^{1/q} .
   $$
Applying Theorem \ref{weighted hardy} \((b)\) with  
\[
u_1(t) = U(t^{-1})\, t^{1/2} t^{-2/q}, 
\quad v_1(t) = t^{1/2} V(t), 
\quad f_1(t) = t^{-1/2} f^*(t),
\] 
we deduce that  
\[
I_2 \leq C \left( \int\limits_{0}^{\infty} \bigl(V(t) f^*(t)\bigr)^p \, dt \right)^{1/p}
\]
if and only if  
\[
\sup_{s>0} 
\left( \int\limits_{0}^{s} U(t^{-1})^q\, t^{q/2} t^{-2} \, dt \right)^{1/q}
\left( \int\limits_{s}^{\infty} t^{-p'/2} V(t)^{-p'} \, dt \right)^{1/p'} < \infty.
\]
   $$
   $$
   
After performing the change of variables \(t \mapsto t^{-1}\), the condition becomes  
\begin{equation}\label{newcond_1}
  \sup_{s>0} 
  \left( \int\limits_{1/s}^{\infty} U(t)^q\, t^{-q/2} \, dt \right)^{1/q}
  \left( \int\limits_{s}^{\infty} t^{-p'/2} V(t)^{-p'} \, dt \right)^{1/p'} < \infty.
\end{equation}
However, in the hypothesis, we only have 
$$
\sup\limits_{s>0}\left ( \displaystyle\int\limits_{0}^{1/s} U(t)^q dt\right)^{1/q}\left ( \displaystyle\int\limits_{0}^{s} V(t)^{-p'} dt\right)^{1/p'} < \infty.
$$
Now, using \cite[Lemma 3.3]{weighted-fourier-symmetric-spaces-Pusti}, with \(N=1\) and \(q_0=2\), we obtain that \eqref{pitts cond1} implies \eqref{newcond_1}.  
Thus, we get 
\begin{equation}\label{estimate of I_2}
    I_2\leq C \left[ \int\limits_{0}^{\infty}\left( V(t)f^*(t)\right)^p\right]^{1/p}.
\end{equation}

Therefore, by combining \eqref{I_1+I_2}, \eqref{estimate of I_1}, and \eqref{estimate of I_2}, 
it follows that for \(p < \infty\) and \((p,q) \neq (2,2)\), there exists a constant \(C > 0\) such that  
\begin{equation}\label{combined estimate of integral}
    \left( \int\limits_{0}^{\infty} \bigl[(\tilde{f})^*(t) U(t)\bigr]^q \, dt \right)^{1/q} 
    \leq C \left( \int\limits_{0}^{\infty} \bigl[V(t) f^*(t)\bigr]^p \, dt \right)^{1/p}.
\end{equation}

\end{proof}

Next, applying the property \eqref{norm in dec arr} of decreasing rearrangements together with 
\eqref{combined estimate of integral}, we obtain  
\begin{align}
\left( \int\limits_{\Omega \times \mathbb{C}^n} |\tilde{f}(a,w)|^q \, u(a,w)^q \, dw \, d\nu_2(a) \right)^{1/q} 
&\leq \left( \int\limits_{0}^{\infty} \bigl[(\tilde{f})^*(t) U(t)\bigr]^q \, dt \right)^{1/q} \nonumber \\
&\leq C \left( \int\limits_{0}^{\infty} \bigl[V(t) f^*(t)\bigr]^p \, dt \right)^{1/p} \nonumber \\
&= C \left( \int\limits_{0}^{\infty} \left[ \frac{1}{(1/v)^*}(t) f^*(t) \right]^p \, dt \right)^{1/p} \nonumber \\
&\leq C \left( \int\limits_{\mathbb{H}^n} |f(z,t)|^p v(z,t)^p \, dz \, dt \right)^{1/p}.
\end{align}
This completes the proof of Pitt's inequality in the present case.  
For the case when \((p,q) = (2,2)\), we follow the line of reasoning presented in \cite[Theorem 3.1]{Tikhonov-weighted_norm_integral_transforms}. By a direct adaptation of the argument used in the proof of \cite[Theorem 2.3]{Tikhonov-weighted_norm_integral_transforms}, for \(s > 0\), the following inequality holds
\[
\left( \int\limits_{0}^{s} \tilde{f}^\ast(t)^2 dt \right)^{1/2} \le C \left( \int\limits_{0}^{x} \left( \int\limits_{0}^{1/t} f^\ast(s) ds \right)^2 dt \right)^{1/2}.
\]
Using the above inequality, we can deduce that: 
\begin{equation}\label{apply dec rearr}
\int\limits_{0}^{\infty} \tilde{f}^\ast(t)^2 U(t)^2 dt \le C \int\limits_{0}^{\infty} \left( \int\limits_{0}^{1/t} f^\ast(s) ds \right)^2 U(t)^2 dt.
\end{equation}
By applying a change of variables, the integral on the right-hand side becomes:
\[
\int\limits_{0}^{\infty} \left( \int\limits_{0}^{t} f^\ast(s) ds \right)^2 U(1/t)^2 t^{-2} dt.
\]
Next, by invoking the weighted Hardy inequality (Theorem~\ref{weighted hardy}~(a)), we conclude that 
\[
\int\limits_{0}^{\infty} \left( \int\limits_{0}^{t} f^\ast(s) ds \right)^2 U(1/t)^2 t^{-2} dt \leq C \int\limits_{0}^{\infty} \left( V(t) f^\ast(t) \right)^2 dt,
\]
if and only if 
\[
\sup_{s > 0} \left( \int\limits_{s}^{\infty} U(1/t)^2 t^{-2} dt \right)^{1/2} \left( \int\limits_{0}^{s} V(t)^{-2} dt \right)^{1/2} < \infty.
\]
This condition can be reformulated as:
\[
\sup_{s > 0} \left( \int\limits_{0}^{1/s} U(t)^2 dt \right)^{1/2} \left( \int\limits_{0}^{s} V(t)^{-2} dt \right)^{1/2} < \infty.
\]
Finally, by utilizing \eqref{apply dec rearr} and the methods applied in the previous case, we can derive the required inequality.
\subsection{An uncertainty principle} 

The uncertainty principle asserts that a function and its Fourier transform cannot both be simultaneously localized. 
A classical inequality that encapsulates this phenomenon is the Heisenberg–Pauli–Weyl inequality in \(\mathbb{R}^n\). 
In \cite{Cowling-Price-HPW-inequality}, inequalities of the type  
\begin{equation}\label{an uncertainty principle in Rn}
    \lVert f \rVert_{2}^2 \leq C \, \lVert u \, \widehat{f} \rVert_{q} \, \lVert v f \rVert_{p},
\end{equation}
for suitable choices of the weight functions \(v\) and \(u\), were studied.

Using Pitt's inequality on \(\mathbb{H}^n\), we establish an analogue of the inequality \eqref{an uncertainty principle in Rn} for the class of \(C_0^\infty(\mathbb{H}^n)\) functions. This result is motivated from \cite[Theorem 6.1]{Pitts_inequalities_Carli-Tikhonov}.
\begin{thm}
    Let \(1 \leq p,q \leq \infty\) and \(u\) and \(v\) be weights such that Pitt's inequality \eqref{pitts inequality Hn} holds. Then we have 
\begin{align}
\int\limits_{\mathbb{H}^n}\left(f(z,t)\right)^2dz\;dt\leq C \left( \int\limits_{\Omega \times \mathbb{C}^n}\left|\lambda u^{-1}\left(a,w\right)\tilde{f}\left(a,w \right)\right|^{q'}d\nu_2(a)\;dw\right)^{1/q'} \left( \int\limits_{\mathbb{H}^n}\left|tv(z,t)f(z,t)\right|^pdz\;dt\right)^{1/p}.
\end{align}
\end{thm}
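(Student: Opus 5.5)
The plan is to mimic the Euclidean argument of \cite[Theorem 6.1]{Pitts_inequalities_Carli-Tikhonov}. We write $\|f\|_2^2$ as an $L^2$ pairing and insert the weights. The key observation is that for $f\in C_0^\infty(\mathbb{H}^n)$ the function $g=\partial f/\partial t$, or more precisely $Tf$ up to the factor $i$, has Strichartz transform $\lambda\tilde f$. This holds because $-i\partial_t e_a=\lambda e_a$ and the kernel $e_a((z,t)^{-1}(w,0))$ depends on $t$ only through $e^{-i\lambda t}$. So integrating by parts in $t$ moves the $t$-derivative onto the kernel and gives $\widetilde{\partial_t f}(a,w)=c\,\lambda\,\tilde f(a,w)$ with $|c|=1$. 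On the limiting ray $R_\infty$ the factor is $0$, but that ray has $\nu_2$-measure zero.

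In the $t$-variable, integration by parts yields
\[
\int_{\mathbb{H}^n}|f|^2\,dz\,dt=-\int_{\mathbb{H}^n} t\,\partial_t\big(|f|^2\big)\,dz\,dt=-2\,\mathrm{Re}\int_{\mathbb{H}^n} t\,f\,\overline{\partial_t f}\,dz\,dt.
\]
We then apply Parseval in the polarized form of \eqref{plancherel} to the pair $h=t f$ and $\partial_t f$:
\[
\Big|\int_{\mathbb{H}^n} t f\,\overline{\partial_t f}\Big|=\Big|\int_{\Omega\times\mathbb{C}^n}\widetilde{tf}\;\overline{\widetilde{\partial_t f}}\;\Big(\tfrac{k!(n-1)!}{(k+n-1)!}\Big)^{-2}\cdots\Big|.
\]
Here the normalizations must be tracked carefully. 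Since \eqref{plancherel} is written with $\nu_2$ and $\tilde f$, the bilinear form is exactly $\int \tilde h\,\overline{\tilde g}\,dw\,d\nu_2$, so no extra factor appears. One caveat: the identity $\int |f|^2=\int|\tilde f|^2 d\nu_2\,dw$ polarizes because $f\mapsto\tilde f$ is linear and this map is isometric onto its image.

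Next we insert $u\,u^{-1}$ and use Hölder with exponents $q,q'$:
\[
\Big|\int \widetilde{tf}\,\overline{\lambda\tilde f}\,d\nu_2\,dw\Big|\le \|u\,\widetilde{tf}\|_{L^q(d\nu_2 dw)}\;\|\lambda u^{-1}\tilde f\|_{L^{q'}(d\nu_2 dw)}.
\]
Finally, Pitt's inequality \eqref{pitts inequality Hn} applied to the function $tf$, which lies in $C_0^\infty\subset\mathcal{S}$, gives $\|u\,\widetilde{tf}\|_q\le C\|v\,t f\|_p$. Combining the three displays gives the stated bound with $|\lambda u^{-1}\tilde f|^{q'}$ and $|t v f|^p$.

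The main obstacle is the intertwining identity $\widetilde{\partial_t f}=\pm i\lambda\tilde f$, whose sign and conjugation conventions have to be checked. It requires computing $e_a((z,t)^{-1}(w,0))=e^{-i\lambda t}e^{i\lambda\,\mathrm{Im}(\cdot)/2}\varphi^{n-1}_{k,\lambda}(w-z)$, so the $t$-dependence factors out as a pure exponential. The argument also needs the endpoint cases $q=1$ or $q=\infty$ of Hölder; these are harmless. Since the statement writes $(f(z,t))^2$, I would treat complex $f$ via $|f|^2$ as above. For real $f$ the identity $\int f^2=-2\int t f\,\partial_t f$ is immediate.
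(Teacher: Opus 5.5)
Your proposal is correct and follows the paper's own proof. Both use integration by parts in $t$ to write $\|f\|_2^2=-2\,\mathrm{Re}\langle \partial_t f, tf\rangle$, then polarized Plancherel with respect to $d\nu_2\,dw$, the intertwining $\widetilde{\partial_t f}=\pm i\lambda\tilde f$, H\"older with exponents $q,q'$, and finally Pitt's inequality applied to $tf\in C_0^\infty$. You are right that only $|c|=1$ matters: since the kernel $e_a((z,t)^{-1}(w,0))$ carries the factor $e^{-i\lambda t}$, the factor actually comes out as $+i\lambda$ rather than the paper's $-i\lambda$, and this does not affect the estimate.
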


\begin{proof}
    For \(f \in C_0^\infty(\mathbb{H}^n)\), integration by parts yields 
    \begin{align*}
        \int\limits_{-\infty}^{\infty}|f(z,t)|^2dt&=0-\int\limits_{-\infty}^{\infty}t \frac{\partial}{\partial t}|f(z,t)|^2\\
        &=-2\operatorname{Re}\left (\int\limits_{-\infty}^{\infty}t \overline{f(z,t)} \frac{\partial}{\partial t}f(z,t)dt\right).
    \end{align*}
    Integrating with respect to \(z\), we obtain
    \begin{align}
       \int\limits_{\mathbb{H}^n}|f(z,t)|^2 dz\;dt &=-2\operatorname{Re}\left (\int\limits_{\mathbb{C}^n \times \mathbb{R}}t \overline{f(z,t)} \frac{\partial}{\partial t}f(z,t)dt\right) \nonumber \\
       &=-2\operatorname{Re}\left \langle \frac{\partial f}{\partial t}, tf \right \rangle \nonumber \\
       &=-2\operatorname{Re}\left \langle \widetilde{\frac{\partial f}{\partial t}}, \widetilde{t f\,} \right \rangle. \label{L2 norm application}
    \end{align}
    The final equality follows from the Plancherel theorem.
    
    \noindent Here we recall that the function \(e_a\) is an eigenfunction of \(T=\frac{\partial}{\partial t}\) with eigen value \(i \la\). Using this, it is easy to very that 
\begin{equation}\label{transform of partial_t}
  \widetilde{\frac{\partial f}{\partial t}}(a,w)=-i \la \tilde{f}(a,w). 
\end{equation}
Substituting \eqref{transform of partial_t} into \eqref{L2 norm application} and applying Pitt's inequality, we obtain  
\begin{align}
    \int\limits_{\mathbb{H}^n} |f(z,t)|^2 \, dz \, dt
    &= 2 \, \operatorname{Re} \left( i \int\limits_{\Omega \times \mathbb{C}^n} 
        \lambda \, \tilde{f}(a,w) \, \overline{\widetilde{t f}(a,w)} \, dw \, d\nu_2(a) \right) \nonumber \\
    &= 2 \, \operatorname{Re} \left( i \int\limits_{\Omega \times \mathbb{C}^n} 
        \bigl(\lambda u^{-1} \tilde{f}(a,w)\bigr) \, 
        \overline{u \, (\widetilde{t f})(a,w)} \, dz \, d\nu_2 \right) \nonumber \\
    &\leq C \, \lVert \lambda u^{-1} \tilde{f} \rVert_{q'} \, 
            \lVert u (\widetilde{t f}) \rVert_{q} \nonumber \\
    &\leq C \, \lVert \lambda u^{-1} \tilde{f} \rVert_{q'} \, 
            \lVert v t f \rVert_{p}.
\end{align}
\noindent This completes the proof.

\end{proof}

\section{Necessary Condition for Pitt's inequality}\label{sec 4}

In this section, we establish the necessary conditions for the validity of Pitt’s inequality 
\eqref{pitts inequality Hn} in the setting of general radial weights.

In case of \(\mathbb{H}^n\), a function \(f(z,t)\) is said to be radial if it is radial in the first component. First, we recall that the normalized Strichartz Fourier transform of a radial function \(f\) on \(\mathbb{H}^n\) is given by 
\begin{equation}
    \tilde{f}(a,w)=\frac{k!(n-1)!}{(k+n-1)!}R^{n-1}_k(-\lambda,f)e^{n-1}_{k,\lambda}(w,0),
\end{equation}
where \(a=(\la,(2k+n)|\la|)\) and 
\begin{equation}\label{kth lag coeff}
    R^{n-1}_{k}(-\la,f)=\frac{k!(n-1)!}{(k+n-1)!}\int\limits_{\mathbb{C}^n}f^{-\la}(z)\varphi^{n-1}_{k,-\la}(z)dz=\frac{k!(n-1)!}{(k+n-1)!}\int\limits_{\mathbb{C}^n}f^{-\la}(z)\varphi^{n-1}_{k,\la}(z)dz.
\end{equation}


We begin by recalling some results from \cite[(9.5.2), (22.16.8)]{handbook} concerning the zeros of Bessel functions 
and Laguerre polynomials \(L^{n-1}_k\).


Let \(k \geq 1\) and \(\nu \geq 0\). Denote by \(J_\nu\) the Bessel function of the first kind, and let \(j_{\nu,s}\) be the \(s^{\text{th}}\) positive zero of \(J_\nu\). These zeros satisfy the interlacing property
\[
j_{\nu,1} < j_{\nu+1,1} < j_{\nu,2} < j_{\nu+1,2} < j_{\nu,3} < \cdots.
\]
For the Laguerre polynomials \(L^{n-1}_k\), the first positive zero \(x_1\) satisfies
\[
x_1 > \frac{j_{n-1,1}^2}{4 M_{k,n}}, \quad \text{where } M_{k,n} = k + \frac{n}{2}.
\]
Moreover, the derivative of \(L^{n-1}_k\) is given by
\begin{equation}\label{derivative of lag poly}
\frac{d}{dx} L^{n-1}_k(x) = -L^n_{k-1}(x), \quad k \geq 1.
\end{equation}


\noindent Observe that \(L^n_{k-1}(x)\) is positive on the interval 
\[
\left[0, \frac{j_{n,1}^2}{4 M_{k-1,n+1}}\right], \quad \text{where } M_{k-1,n+1} = k-1 + \frac{n+1}{2}.
\] 
This follows from the fact that the first positive zero of \(L^n_{k-1}(x)\) exceeds \(\frac{j_{n,1}^2}{4 M_{k-1,n+1}}\). 

\noindent Moreover, a straightforward comparison shows that
\[
\frac{j_{n-1,1}^2}{4 M_{k,n}} 
= \frac{j_{n-1,1}^2}{4 \left(k + n/2\right)}
< \frac{j_{n-1,1}^2}{4 \left(k-1 + (n+1)/2\right)}
< \frac{j_{n,1}^2}{4 \left(k-1 + (n+1)/2\right)}
= \frac{j_{n,1}^2}{4 M_{k-1,n+1}}.
\]

\noindent Consequently, \(L^n_{k-1}(x)\) is positive on 
\[
\left[0, \frac{j_{n-1,1}^2}{4 M_{k,n}}\right].
\] 
Combining this with \eqref{derivative of lag poly}, it follows that \(L^{n-1}_k(x)\) is decreasing on the interval 
\[
\left[0, \frac{j_{n-1,1}^2}{4 M_{k,n}}\right].
\]
We are now ready to derive the necessary condition on a radial weight for Pitt's inequality to hold. 
Assume that \(u\) and \(v^{-p'}\) are locally integrable.


\begin{theorem}\label{thm-necessary conditions for radial functions}
Let \(u\) and \(v\) be functions defined on \(\Omega \times \mathbb{C}^n\) and \(\mathbb{H}^n\), respectively, with \(v\) radial, and suppose that Pitt's inequality \eqref{pitts inequality Hn} holds for \(1<p,q<\infty\). Then the following necessary conditions hold:

\begin{itemize}
    \item For each \(k \geq 1\),
    \begin{equation}\label{necessary conditions k>=1}
    \sup_{s>0} 
    \left( \int\limits_{R_{k,s}} \int\limits_{|w|^2 < 2 \beta_k s} u\big(\lambda, (2k+n)|\lambda|, w\big)^q |\lambda|^{2n} \, dw \, d\lambda \right)^{1/q}
    \left( \int\limits_{|z|^2 < 2 \beta_k s} \int\limits_{|t| < s} v(z,t)^{-p'} \, dz \, dt \right)^{1/p'} < \infty,
    \end{equation}
    where 
    \(\beta_k = \frac{j_{n-1,1}^2}{4 M_{k,n}}\) and \(R_{k,s} = \{ (\lambda, (2k+n)|\lambda|) : |\lambda| < 1/s \}\).
    
    \item For \(k = 0\),
    \begin{equation}\label{necessary conditions k=0}
    \sup_{s>0} 
    \left( \int\limits_{R_{0,s}} \int\limits_{|w|^2 < 2 s} u\big(\lambda, n|\lambda|, w\big)^q |\lambda|^{2n} \, dw \, d\lambda \right)^{1/q}
    \left( \int\limits_{|z|^2 < 2 s} \int\limits_{|t| < s} v(z,t)^{-p'} \, dz \, dt \right)^{1/p'} < \infty.
    \end{equation}
\end{itemize}
\end{theorem}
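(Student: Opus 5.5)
The plan is to follow Heinig's duality-plus-test-function argument. Fix $s>0$ and $k\geq 0$ (with $\beta_0$ replaced by $1$ when $k=0$). By duality in $L^p$ we have
\[
\Bigl(\int_{E_s} v^{-p'}\Bigr)^{1/p'}=\sup\Bigl\{\int_{E_s} g: g\ge 0,\ \operatorname{supp} g\subset E_s,\ \|gv\|_p\le 1\Bigr\},
\]
where $E_s=\{|z|^2<2\beta_k s,\ |t|<s\}$. Since $v$ is radial, we may restrict to $g$ radial in $z$, nonnegative and supported in $E_s$; the extremal is $g=v^{-p'}\chi_{E_s}$, suitably normalized and truncated so that it lies in the admissible class. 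Local integrability of $v^{-p'}$ makes this legitimate. Approximating $g$ by Schwartz functions, Pitt's inequality \eqref{pitts inequality Hn} applies to $g$.

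The key step is a pointwise lower bound for $\tilde g$ on the set
\[
F_s=\{(a,w): a=(\lambda,(2k+n)|\lambda|)\in R_{k,s},\ |w|^2<2\beta_k s\}.
\]
We use the radial formula
\[
\tilde g(a,w)=\tfrac{k!(n-1)!}{(k+n-1)!}R^{n-1}_k(-\lambda,g)\,e^{n-1}_{k,\lambda}(w,0),
\]
with $R^{n-1}_k$ as in \eqref{kth lag coeff}, where $g^{-\lambda}(z)=\int g(z,t)e^{-i\lambda t}\,dt$. For $|\lambda|<1/s$ and $|t|<s$ we have $|\lambda t|<1$, so $\operatorname{Re}e^{-i\lambda t}\ge\cos 1>0$. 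For $|z|^2<2\beta_k s$ we get $\tfrac12|\lambda||z|^2<\beta_k$. By the monotonicity shown just before the theorem, $L^{n-1}_k$ is decreasing on $[0,\beta_k]$ and positive there, since $\beta_k$ lies below the first zero. Hence
\[
\varphi^{n-1}_{k,\lambda}(z)\ge L^{n-1}_k(\beta_k)\,e^{-\beta_k/2}=:c_k>0.
\]
For $k=0$ we have $L^{n-1}_0\equiv1$, so the bound $e^{-\tfrac14|\lambda||z|^2}\ge e^{-1/2}$ holds on $|z|^2<2s$. In both cases
\[
\operatorname{Re}R^{n-1}_k(-\lambda,g)\ge c\int g,
\]
and likewise $e^{n-1}_{k,\lambda}(w,0)\ge c_k$ for $|w|^2<2\beta_k s$. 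Thus $|\tilde g(a,w)|\ge c'\int g$ on $F_s$, with $c'$ depending only on $n,k$ (the normalizing factorial ratios are fixed for fixed $k$).

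Plugging this into \eqref{pitts inequality Hn} and restricting the left side to $F_s$, the measure $d\nu_2$ restricted to the ray $R_k$ is a constant times $|\lambda|^{2n}d\lambda$. This gives
\[
c'\int g\,\Bigl(\int_{R_{k,s}}\int_{|w|^2<2\beta_k s}u^q|\lambda|^{2n}\,dw\,d\lambda\Bigr)^{1/q}\le C\|gv\|_p\le C.
\]
Taking the supremum over $g$ yields the product bound with a constant independent of $s$, which is \eqref{necessary conditions k>=1} and, with $\beta$ replaced by $1$, \eqref{necessary conditions k=0}.

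The main obstacle is making the duality step rigorous. One must ensure that the extremal $g$ (or an approximating sequence) has finite $\|gv\|_p$ and can be approximated by Schwartz functions so that Pitt's inequality applies, while preserving the lower bound on $\tilde g$. I would handle this by truncating $v^{-p'}$ at level $N$, smoothing, and using monotone convergence, together with the local integrability assumptions on $u$ and $v^{-p'}$. The positivity estimate for $\varphi^{n-1}_{k,\lambda}$ also needs $L^{n-1}_k(\beta_k)>0$, which follows from the bound $x_1>\beta_k$ on the first zero.
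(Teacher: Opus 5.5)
Your proposal is correct and is essentially the paper's argument. The paper plugs in the extremal of your duality directly, $f_k=v^{-p'}\chi_{E_s}$. It uses the same bounds $\cos(\lambda t)\ge\cos 1$ and $\varphi^{n-1}_{k,\lambda}\ge L^{n-1}_k(\beta_k)e^{-\beta_k/2}$ to get $|\tilde f_k|\gtrsim\int_{E_s}v^{-p'}$ on $F_s$, and then divides Pitt's inequality by $\bigl(\int_{E_s}v^{-p'}\bigr)^{1/p}$. Your supremum over admissible $g$, with truncation and approximation, only adds rigor the paper takes for granted: it applies Pitt's inequality to this non-Schwartz test function directly, assuming only that $v^{-p'}$ is locally integrable.
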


\begin{proof}
First, fix \(k \geq 1\). For any \(s > 0\), consider the radial function
\begin{equation}\label{necessary proof func def}
f_k(z,t) =
\begin{cases}
v(z,t)^{-p'}, & \text{if } |z|^2 < 2 \beta_k s \text{ and } |t| < s,\\[2mm]
0, & \text{otherwise}.
\end{cases}
\end{equation}
Since \(v(z,t)\) is radial, it follows that \(f_k(z,t)\) is radial on \(\mathbb{H}^n\).

Observe that if \(|\lambda| < 1/s\) and \(|z|^2 < 2 \beta_k s\), then
\[
\frac{1}{2} |\lambda| |z|^2 < \frac{1}{2 s} \cdot 2 \beta_k s = \beta_k.
\]
Hence, using the decreasing property of \(L^{n-1}_k(x)\) established earlier, we have
\begin{equation}\label{necessary proof eq 1}
L^{n-1}_k\Big(\frac{1}{2} |\lambda| |z|^2\Big) > L^{n-1}_k(\beta_k).
\end{equation}

\noindent Similarly, we also have
\begin{equation}\label{necessary proof eq 2}
e^{-\frac{1}{4} |\lambda| |z|^2} > e^{-\beta_k / 2}.
\end{equation}
Combining \eqref{necessary proof eq 1} and \eqref{necessary proof eq 2}, we obtain 
\begin{equation}\label{bound for lag func}
\varphi^{n-1}_{k,\la}(z)=L^{n-1}_k\left( \frac{1}{2}|\la||z|^2\right)e^{-\frac{1}{4}|\la||z|^2}>L^{n-1}_k(\beta_k)e^{-\frac{\beta_k}{2}}=B_k(\text{ say}).
\end{equation}
  Consequently, we deduce that if \(|\la|<1/s\) and \(|z|^2< 2\beta_k s\), then \(\varphi^{n-1}_{k,\la}(z)\) is bounded below by a constant \(B_k\). 
  For the choice of \(f\) as in \eqref{necessary proof func def}, we have 
  $$
  R^{n-1}_k(-\la,f)=c_{n,k}\int\limits_{|z|^2<2\beta_k s}\int\limits_{|t|<s} v(z,t)^{-p'}e^{-i\la t}\varphi^{n-1}_{k,\la}(z)dt\;dz,
  $$
  where \(c_{n,k}=\frac{k!(n-1)!}{(k+n-1)!}\).
  
\noindent Next, taking the real part gives
 \begin{align}
\operatorname{Re}\big(R^{n-1}_k(-\lambda, f)\big) 
&= c_{n,k} \int\limits_{|z|^2 < 2 \beta_k s} \int\limits_{|t| < s} v(z,t)^{-p'} \cos(\lambda t) \, \varphi^{n-1}_{k,\lambda}(z) \, dt \, dz \nonumber \\
&\geq c_{n,k} \cos(1) \int\limits_{|z|^2 < 2 \beta_k s} \int\limits_{|t| < s} v(z,t)^{-p'} \, \varphi^{n-1}_{k,\lambda}(z) \, dt \, dz \nonumber \\
&\geq c_{n,k} \cos(1) \, B_k \int\limits_{|z|^2 < 2 \beta_k s} \int\limits_{|t| < s} v(z,t)^{-p'} \, dt \, dz \nonumber \\
&= B'_k \int\limits_{|z|^2 < 2 \beta_k s} \int\limits_{|t| < s} v(z,t)^{-p'} \, dt \, dz, \label{bound for real part}
\end{align}
where \(B'_k = c_{n,k} \cos(1) \, B_k\).

Let 
\[
R_{k,s} = \big\{ (\lambda, (2k+n)|\lambda|) : |\lambda| < 1/s \big\}.
\] 
Then we have
\begin{align}
&\Bigg( \int\limits_{R_{k,s}} \int\limits_{|w|^2 < 2 \beta_k s} 
\big| \tilde{f}(\lambda, (2k+n)|\lambda|, w) \big|^q 
\, u(\lambda, (2k+n)|\lambda|, w)^q \, |\lambda|^{2n} \, dw \, d\lambda \Bigg)^{1/q} \nonumber \\
&= c_{n,k} \Bigg( \int\limits_{R_{k,s}} \int\limits_{|w|^2 < 2 \beta_k s} 
u(\lambda, (2k+n)|\lambda|, w)^q \, |R^{n-1}_k(-\lambda, f)|^q \, |e^{n-1}_{k,\lambda}(w)|^q \, |\lambda|^{2n} \, dw \, d\lambda \Bigg)^{1/q} \nonumber \\
&= c_{n,k} \Bigg( \int\limits_{R_{k,s}} \int\limits_{|w|^2 < 2 \beta_k s} 
u(\lambda, (2k+n)|\lambda|, w)^q \, |R^{n-1}_k(-\lambda, f)|^q \, |\varphi^{n-1}_{k,\lambda}(w)|^q \, |\lambda|^{2n} \, dw \, d\lambda \Bigg)^{1/q} \nonumber \\
&> C_k \Bigg( \int\limits_{R_{k,s}} \int\limits_{|w|^2 < 2 \beta_k s} 
u(\lambda, (2k+n)|\lambda|, w)^q 
\Big( \int\limits_{|z|^2 < 2 \beta_k s} \int\limits_{|t| < s} v(z,t)^{-p'} \, dt \, dz \Big)^q 
\, |\lambda|^{2n} \, dw \, d\lambda \Bigg)^{1/q}, \label{estimate for integral in R_ks}
\end{align}
where the constant \(C_k\) is independent of the choice of \(s > 0\).
  
Applying Pitt's inequality \eqref{pitts inequality Hn} together with \eqref{estimate for integral in R_ks}, we obtain
\begin{align}
&C_k \Bigg( \int\limits_{R_{k,s}} \int\limits_{|w|^2 < 2 \beta_k s} 
u(\lambda, (2k+n)|\lambda|, w)^q 
\Big( \int\limits_{|z|^2 < 2 \beta_k s} \int\limits_{|t| < s} v(z,t)^{-p'} \, dt \, dz \Big)^q 
\, |\lambda|^{2n} \, dw \, d\lambda \Bigg)^{1/q} \nonumber \\
&\leq C \Bigg( \int\limits_{|z|^2 < 2 \beta_k s} \int\limits_{|t| < s} v(z,t)^{-p'p} \, v(z,t)^p \, dz \, dt \Bigg)^{1/p} \nonumber \\
&= C \Bigg( \int\limits_{|z|^2 < 2 \beta_k s} \int\limits_{|t| < s} v(z,t)^{-p'} \, dz \, dt \Bigg)^{1/p}.
\end{align}




The above inequality can be rewritten as
\begin{align}
\Bigg( \int\limits_{R_{k,s}} \int\limits_{|w|^2 < 2 \beta_k s} u(\lambda, (2k+n)|\lambda|, w)^q \, |\lambda|^{2n} \, dw \, d\lambda \Bigg)^{1/q} 
\Bigg( \int\limits_{|z|^2 < 2 \beta_k s} \int\limits_{|t| < s} v(z,t)^{-p'} \, dt \, dz \Bigg)^{1/p'} 
< C'_k.
\end{align}

Taking the supremum over \(s > 0\) then yields the required necessary conditions on the weights.

The case \(k = 0\) is similar, where we define
\begin{equation}
f_0(z,t) = 
\begin{cases}
v(z,t)^{-p'}, & \text{if } |z|^2 < 2 s \text{ and } |t| < s,\\[1mm]
0, & \text{otherwise}.
\end{cases}
\end{equation}

Now we use that \(L^{n-1}_0=1\) and for \(|\la|<1/s, |z|^2<2s\),
$$
\varphi^{n-1}_0(z)=L^{n-1}_0\left(\frac{1}{2} |\la||z|^2 \right)e^{-\frac{1}{4}|\la||z|^2}=e^{-\frac{1}{4}|\la||z|^2}>e^{-\frac{1}{2}}.
$$
\end{proof}

\section{Pitt's inequality for polynomial type weights}\label{sec 5}
In this section, we examine the condition \eqref{pitts cond1} for polynomial-type weights, thereby providing examples of non-trivial weights that satisfy Pitt's inequality. In this section, we use \(X \asymp Y\) to denote that \(X=CY\) for some constant \(C>0\) and \(X \lesssim Y\) to denote \(X\leq CY\) for some constant \(C>0\).

\subsection{Weight on the function side}
For \(\al>0\), let us consider the following weight
\begin{equation}\label{weight on function side}
    v_{\alpha}(z,t)=\left(|z|^4+16t^2\right)^{\alpha/4}.
\end{equation}
To find the decreasing rearrangement of the function \(1/v_{\alpha}\), we first calculate its distribution function. 
For \(M>0\),
\begin{align*}
    d_{1/v_{\alpha}}(M)&=\left|\left\{ (z,t):\left(|z|^4+16t^2\right)^{\alpha/4}>M \right \} \right| \nonumber \\
    &= \left|\left \{ (z,t):|z|^4+16t^2<M^{-4/\alpha} \right \} \right|.
\end{align*}

Here, \(|\cdot|\) denotes the measure of a set with respect to the measure \(dz \, dt\) on \(\mathbb{H}^n\).  
Thus, we have
\begin{equation*}
    d_{1/v_{\alpha}}(M)\asymp M^{-\frac{2(n+1)}{\alpha}}.
\end{equation*}

Then, the decreasing rearrangement of \(1/v_{\alpha}\) is given by 
\begin{align}
    (1/v_{\alpha})^*(t)&=\inf\left\{ M>0: d_{1/v}(M) \leq t \right\}\nonumber\\
    &\asymp t^{-\frac{\alpha}{2(n+1)}}\label{estimate of (1/v)*}.
\end{align}

\subsection{Weight on the Fourier transform side}
We consider two weights on the Fourier transform side. For \(\sigma,\rho>0\), consider the following two functions on \(\Omega \times \mathbb{C}^n\) given by
\begin{enumerate}
    \item \begin{equation}\label{second weight on transform side}
        u_{\rho}(a,w)=u\left(\la,(2k+n)|\la|,w\right)=\begin{cases}
        \left(|a|+|w|\right)^{-\rho}  & \text{ for } w \neq 0, \lambda \neq 0,\\
        0 & \text{ otherwise},
    \end{cases} 
    \end{equation}
    where \(|a|=(2k+n+1)|\lambda|\).
    \item \begin{equation}\label{weight on transform side}
    u_{\sigma,\rho}(a,w)=u\left(\la,(2k+n)|\la|,w\right)=\begin{cases}
        \left((2k+n)|\la|\right)^{-\sigma} |w|^{-\rho} & \text{ for } w \neq 0, |\lambda| > 1,\\
        0 & \text{ otherwise}.
    \end{cases}
\end{equation}
\end{enumerate}

We need to find the decreasing rearrangement of \(u_{\rho}\) and \(u_{\sigma,\rho}\). We denote \(| \cdot |\) as the measure of subset of \(\Omega \times \mathbb{C}^n\) with respect to measure \(d\nu_2\;dw\). 
For \(M>0\), observe that 
$$
\{ (a,w): u_\rho(a,w) > M \} 
\subseteq 
 \{ (a,w): |w| < M^{-1/\rho},  (2k+n)|\lambda| < M^{-1/\rho} \}.
$$
Using this we get 
\begin{align*}
    d_{u_{\rho}}(M)&=\left| \left\{( (\lambda,(2k+n)|\lambda|),w): u_{\rho}(a,w)>M \right \} \right|\\
    &\leq \left| \left\{ ( (\lambda,(2k+n)|\lambda|),w): |w| < M^{-1/\rho},  (2k+n)|\lambda| < M^{-1/\rho} \right\} \right|\\
    &\asymp  M^{-(4n+1)/\rho}.
\end{align*}
\noindent By definition, the decreasing rearrangement of \(u_{\rho}\) is given by 
\begin{align}
    u^\ast_{\rho}(t)&=\inf\{M>0: d_{u_\rho}(M) \leq t\}\nonumber \\
    &\lesssim t^{-\rho/(4n+1)}. \label{estimate of u*}
\end{align}
We also find the decreasing rearrangement for the function \(u_{\sigma,\rho}\).
For \(M>0\),
\begin{align*}
    d_{u_{\sigma,\rho}}(M)&=\left| \left\{( (\la,(2k+n)|\la|),w): \left((2k+n)|\la|\right)^{-\sigma }|w|^{-\rho}>M, |\lambda| > 1 \right \} \right|\\
    &=\left| \left \{ ( (\la,(2k+n)|\la|),w): |w|^{\rho}<M^{-1}\left((2k+n)|\la|\right)^{-\sigma }, |\la|>1\right\} \right|\\
    &\asymp M^{-2n/\rho}\int\limits_{|\lambda|>1} \sum\limits_{k} \left ( \frac{(k+n-1)!}{k!(n-1)!}\right )^2 (2k+n)^{-2n\sigma / \rho} |\lambda|^{-2n\sigma/ \rho}|\lambda|^{2n} d\lambda\\
    &\asymp M^{-2n/\rho},
\end{align*}
provided \(\sigma>\frac{\rho(2n+1)}{2n}\).

\noindent By definition, the decreasing rearrangement of \(u_{\sigma,\rho}\) is given by 
\begin{align}
    {u^\ast_{\sigma,\rho}}(t)&=\inf\{M>0: d_{u_{\sigma,\rho}}(M) \leq t\}\nonumber \\
    &\asymp t^{-\rho /2n}\label{estimate of u'*}.
\end{align}
when \(\sigma>\frac{\rho(2n+1)}{2n}\).
\subsection{Sufficient conditions on weights}
Using decreasing rearrangements of weights, we now determine conditions on weights using Theorem \ref{theorem-Pitts for Hn}, so that Pitt's inequality \eqref{pitts inequality Hn} holds.
We consider two pairs of weights \(v_{\alpha}\), \(u_{\rho}\) and \(v_{\alpha}\), \(u_{\sigma,\rho}\).

\begin{thm}
     Let \(1 < p \leq q < \infty\). Then the inequality \eqref{pitts inequality Hn} is satisfied for the weights \(u_{\rho}\) and \(v_{\alpha}\) if the following hold
        \begin{equation*}
            0<\alpha,\; 0<\rho,\; \alpha p'<2(n+1),\; q\rho<4n+1,\; 
        \end{equation*}
        and 
    \begin{equation*}
           - \frac{\alpha}{2(n+1)}+\frac{1}{p'}+\frac{\rho}{4n+1}-\frac{1}{q}=0.   
        \end{equation*}
\end{thm}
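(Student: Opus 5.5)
The plan is to verify the sufficient condition \eqref{pitts cond1} of Theorem \ref{theorem-Pitts for Hn} for the pair $u_\rho$, $v_\alpha$. Everything reduces to the rearrangement estimates already computed. By \eqref{estimate of u*} we have $U(t)=u_\rho^*(t)\lesssim t^{-\rho/(4n+1)}$. By \eqref{estimate of (1/v)*} we have $V(t)=1/(1/v_\alpha)^*(t)\asymp t^{\alpha/(2(n+1))}$, so that $V(t)^{-p'}\asymp t^{-\alpha p'/(2(n+1))}$.

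Since the upper bound for $U$ enters the left factor of \eqref{pitts cond1} monotonically, it suffices to bound the two integrals separately. For the first,
\[
\int_0^{1/s}U(t)^q\,dt\lesssim \int_0^{1/s}t^{-q\rho/(4n+1)}\,dt\asymp s^{-1+q\rho/(4n+1)},
\]
and this integral is finite exactly because $q\rho<4n+1$. For the second,
\[
\int_0^{s}V(t)^{-p'}\,dt\asymp s^{1-\alpha p'/(2(n+1))},
\]
which converges at $0$ because $\alpha p'<2(n+1)$.

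Taking the powers $1/q$ and $1/p'$ respectively, the product in \eqref{pitts cond1} is bounded by a constant times
\[
s^{-\frac1q+\frac{\rho}{4n+1}}\, s^{\frac1{p'}-\frac{\alpha}{2(n+1)}}.
\]
The exponent here vanishes precisely under the stated balance condition. The supremum over $s>0$ is therefore finite, and Theorem \ref{theorem-Pitts for Hn} (applicable since $1<p\le q<\infty$) yields \eqref{pitts inequality Hn}.

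The main point requiring care is the rearrangement of $u_\rho$. Only the inclusion $\{u_\rho>M\}\subseteq\{|w|<M^{-1/\rho},\,(2k+n)|\lambda|<M^{-1/\rho}\}$ was used, giving the upper bound $d_{u_\rho}(M)\lesssim M^{-(4n+1)/\rho}$. I would check that this suffices: a smaller distribution function gives a pointwise smaller rearrangement, and the left factor of \eqref{pitts cond1} is monotone in $U$.

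I would also double-check the exponent $4n+1$. Computing the $d\nu_2$-measure of $\{(2k+n)|\lambda|<R\}$ gives
\[
\sum_k \Bigl(\tfrac{(k+n-1)!}{k!(n-1)!}\Bigr)^2\int_{|\lambda|<R/(2k+n)}|\lambda|^{2n}\,d\lambda\asymp R^{2n+1}\sum_k k^{2n-2}(2k+n)^{-2n-1}.
\]
This sum converges, since the summand behaves like $k^{-3}$. Multiplying by $R^{2n}$ from the $w$-ball then gives $R^{4n+1}$ with $R=M^{-1/\rho}$, as claimed.
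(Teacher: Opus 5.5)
Your proposal is correct and follows essentially the same route as the paper's proof. Like the paper, you verify \eqref{pitts cond1} using the bounds $U(t)\lesssim t^{-\rho/(4n+1)}$ and $V(t)^{-p'}\asymp t^{-\alpha p'/(2(n+1))}$, integrate under $q\rho<4n+1$ and $\alpha p'<2(n+1)$, and observe that the resulting power of $s$ vanishes under the balance condition. Your extra checks are sound and supply details the paper leaves implicit: first, that an upper bound on $d_{u_\rho}$ suffices because rearrangement is monotone in the distribution function; second, that the $d\nu_2\,dw$-measure computation gives the exponent $4n+1$ through the convergent sum $\sum_k k^{-3}$.
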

\begin{proof}
Let us denote the decreasing rearrangement of \(u_{\rho}\) by \(U\) and the decreasing rearrangement of \(1/v_{\alpha}\) by \(1/V\).

\noindent We find the conditions on \(\alpha,\rho\) such that \eqref{pitts cond1} holds, that is, 
\begin{equation*}
        \sup_{s>0}\left ( \int\limits_{0}^{1/s}U(t)^qdt \right )^{1/q} \left ( \int\limits_{0}^{s}V(t)^{-p'}dt \right )^{1/p'} <\infty.
    \end{equation*}
Using \eqref{estimate of (1/v)*}, we have 
\begin{align}
    \left(\int\limits_{0}^sV(t)^{-p'}dt\right)^{1/p'}\asymp & \left(\int\limits_{0}^s  t^{-\frac{\alpha p'}{2(n+1)}} dt\right)^{1/p'}\nonumber\\
 &\asymp s^{-\frac{\alpha}{2(n+1)}+\frac{1}{p'}},\nonumber
    \end{align}
provided 
\begin{equation}\label{weights cond1}
    \frac{\alpha p'}{2(n+1)}<1.
\end{equation}
Similarly, using \eqref{estimate of u*}, we have
\begin{align}
   \left( \int\limits_{0}^{1/s}U(t)^qdt\right)^{1/q}& \lesssim \left( \int\limits_{0}^{1/s}t^{-\frac{q\rho}{4n+1}}dt\right)^{1/q}\nonumber\\
   &\asymp s^{\frac{\rho}{4n+1}-\frac{1}{q}},\nonumber,
\end{align}
provided
\begin{equation}\label{weights cond2}
    \frac{q\rho}{4n+1}<1.
\end{equation}
Thus we deduce, 
\begin{align*}
     \sup_{s>0}\left ( \int\limits_{0}^{1/s}U(t)^qdt \right )^{1/q} \left ( \int\limits_{0}^{s}V(t)^{-p'}dt \right )^{1/p'} &\lesssim\sup_{s>0} s^{-\frac{\alpha}{2(n+1)}+\frac{1}{p'}+\frac{\rho}{4n+1}-\frac{1}{q}}.
\end{align*}
The above is finite if and only if 
\begin{equation}\label{weights cond3}
   - \frac{\alpha}{2(n+1)}+\frac{1}{p'}+\frac{\rho}{4n+1}-\frac{1}{q}=0.
\end{equation}
This completes the proof.
\end{proof}
The next theorem provides sufficient conditions for the Pitt's inequality \eqref{pitts inequality Hn} to hold for weights \(v_{\alpha}\) and \(u_{\sigma,\rho}\).
\begin{thm}
    Let \(1 < p \leq q < \infty\). Then the inequality \eqref{pitts inequality Hn} is satisfied for the weights \(u_{\sigma,\rho}\) and \(v_{\alpha}\) if the following hold
        \begin{equation*}
            0<\alpha,\; 0<\sigma ,\; 0<\rho,\; \rho(2n+1)<2n\sigma,\; \alpha p'<2(n+1),\; q\rho<2n,\; 
        \end{equation*}
        and 
    \begin{equation*}
           -\frac{\alpha}{2(n+1)}+\frac{1}{p'}+\frac{\rho}{2n}-\frac{1}{q}=0.  
        \end{equation*}
\end{thm}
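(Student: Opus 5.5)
The plan mirrors the proof of the preceding theorem for the pair \(u_{\rho}\), \(v_{\alpha}\). The goal is to verify the sufficient condition \eqref{pitts cond1} of Theorem \ref{theorem-Pitts for Hn} for the pair \(U=u_{\sigma,\rho}^\ast\) and \(1/V=(1/v_\alpha)^\ast\). Once that is done, Theorem \ref{theorem-Pitts for Hn} (valid since \(1<p\le q<\infty\)) gives \eqref{pitts inequality Hn} directly.

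\textbf{Function-side factor.} By \eqref{estimate of (1/v)*} we have \(V(t)^{-p'}\asymp t^{-\alpha p'/(2(n+1))}\). Under the hypothesis \(\alpha p'<2(n+1)\) this power is integrable at \(0\), so
\[
\Bigl(\int_0^s V(t)^{-p'}\,dt\Bigr)^{1/p'}\asymp s^{-\frac{\alpha}{2(n+1)}+\frac{1}{p'}}.
\]

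\textbf{Transform-side factor.} The hypothesis \(\rho(2n+1)<2n\sigma\) is exactly the condition \(\sigma>\rho(2n+1)/(2n)\) under which \eqref{estimate of u'*} holds, namely \(U(t)\asymp t^{-\rho/2n}\). This condition is what makes the \(\lambda\)-integral over \(|\lambda|>1\) and the sum over \(k\) in the distribution function converge. Then \(q\rho<2n\) makes \(t^{-q\rho/2n}\) integrable near \(0\), so
\[
\Bigl(\int_0^{1/s}U(t)^q\,dt\Bigr)^{1/q}\asymp s^{\frac{\rho}{2n}-\frac1q}.
\]

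\textbf{Conclusion.} Multiplying the two factors, the supremum in \eqref{pitts cond1} is comparable to \(\sup_{s>0}s^{\gamma}\) with
\[
\gamma=-\frac{\alpha}{2(n+1)}+\frac1{p'}+\frac{\rho}{2n}-\frac1q.
\]
This is finite precisely when \(\gamma=0\), which is the stated balance condition. Theorem \ref{theorem-Pitts for Hn} then yields the inequality.

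The only step needing care is the distribution function of \(u_{\sigma,\rho}\) behind \eqref{estimate of u'*}. That estimate is already recorded in the paper, but its convergence deserves a check. The relevant sum is
\[
\sum_k \Bigl(\tfrac{(k+n-1)!}{k!(n-1)!}\Bigr)^2(2k+n)^{-2n\sigma/\rho},
\]
whose terms behave like \(k^{2n-2-2n\sigma/\rho}\). This sum converges iff \(2n\sigma/\rho>2n-1\). The \(\lambda\)-integral \(\int_{|\lambda|>1}|\lambda|^{2n-2n\sigma/\rho}\,d\lambda\) converges iff \(2n\sigma/\rho>2n+1\), which is the stronger of the two requirements and is exactly the hypothesis \(\rho(2n+1)<2n\sigma\). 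So the hypothesis guarantees both that the integrated constant is finite and that \(d_{u_{\sigma,\rho}}(M)\asymp M^{-2n/\rho}\), and everything else is routine.
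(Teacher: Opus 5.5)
Your proposal is correct and follows the paper's proof essentially step for step. Like the paper, you use $(1/v_\alpha)^*(t)\asymp t^{-\alpha/(2(n+1))}$ and $u_{\sigma,\rho}^*(t)\asymp t^{-\rho/2n}$ to reduce condition \eqref{pitts cond1} to $\sup_{s>0}s^{\gamma}$ with $\gamma=0$, and then invoke Theorem~\ref{theorem-Pitts for Hn}. Your extra convergence check goes slightly beyond the paper: the $\lambda$-integral needs $2n\sigma/\rho>2n+1$ and the $k$-sum needs only $2n\sigma/\rho>2n-1$, and this correctly justifies the distribution-function estimate that the paper simply asserts.
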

\begin{proof}
    Proceeding on the same lines as previous theorem, we denote the decreasing rearrangement of \(u_{\sigma,\rho}\) by \(U\) and the decreasing rearrangement of \(1/v_{\alpha}\) by \(1/V\).
    We have the estimate
\begin{equation*}
    \left(\int\limits_{0}^sV(t)^{-p'}dt\right)^{1/p'}\asymp s^{-\frac{\alpha}{2(n+1)}+\frac{1}{p'}},
    \end{equation*}
provided 
\begin{equation}\label{second weights cond1}
    \frac{\alpha p'}{2(n+1)}<1.
\end{equation}
Similarly using decreasing rearrangement of \(u_{\sigma,\rho}\), we have 
\begin{align}
   \left( \int\limits_{0}^{1/s}U(t)^qdt\right)^{1/q}& \asymp \left( \int\limits_{0}^{1/s}t^{-\frac{q\rho}{2n}}dt\right)^{1/q}\nonumber\\
   &\asymp s^{\frac{\rho}{2n}-\frac{1}{q}},\nonumber
\end{align}
provided
\begin{equation}\label{second weights cond2}
    \frac{q\rho}{2n}<1.
\end{equation}
\noindent Therefore, we have 
\begin{equation*}
        \sup_{s>0}\left ( \int\limits_{0}^{1/s}U(t)^qdt \right )^{1/q} \left ( \int\limits_{0}^{s}V(t)^{-p'}dt \right )^{1/p'} \asymp \sup_{s>0}s^{-\frac{\alpha}{2(n+1)}+\frac{1}{p'}+\frac{\rho}{2n}-\frac{1}{q}}.
    \end{equation*}
The above supremum is finite if and only if
\begin{equation}\label{weights cond3}
   - \frac{\alpha}{2(n+1)}+\frac{1}{p'}+\frac{\rho}{2n}-\frac{1}{q}=0.
\end{equation}
\end{proof}


\subsection{Necessary conditions on weights} In this subsection, we derive necessary conditions on the weights using Theorem \ref{thm-necessary conditions for radial functions}. 
Note that the weight defined in \eqref{weight on function side} is radial on \(\mathbb{H}^n\). As in previous subsection, we first consider the pair of weights \(v_{\alpha}\), \(u_{\rho}\).
\begin{thm}\label{thm 1 necessary conditions}
   Assume that for the weights \(v_{\alpha}\), \(u_{\rho}\),  Pitt's inequality \eqref{pitts inequality Hn} holds for \(1<p,q<\infty\). Then the following conditions necessarily hold :
$$0<\alpha,\,0<\rho,\,\alpha p'<2(n+1),$$
and 
$$-\rho < -\frac{(n+1)}{q}-\frac{\alpha}{2}+\frac{(n+1)}{p'}<\frac{\rho}{2}.$$
   \end{thm}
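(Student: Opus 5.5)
The plan is to specialize Theorem \ref{thm-necessary conditions for radial functions} to the pair $v_\alpha, u_\rho$ and to use only the $k=0$ condition \eqref{necessary conditions k=0}. The weight $v_\alpha$ depends on $z$ only through $|z|$, so it is radial and the theorem applies. The conditions $0<\alpha$ and $0<\rho$ are part of the standing definitions of $v_\alpha$ and $u_\rho$. For each $s>0$ I will compute the two factors in \eqref{necessary conditions k=0} as powers of $s$, up to constants. Then I will read off what boundedness of their product forces in the two regimes $s\to 0$ and $s\to\infty$.

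\emph{The $v$-factor.} I will use the Heisenberg dilations $z\mapsto s^{1/2}z$, $t\mapsto st$, under which $dz\,dt$ scales by $s^{n+1}$ and $|z|^4+16t^2$ scales by $s^2$. This gives
\begin{equation*}
\int\limits_{|z|^2<2s}\int\limits_{|t|<s} v_\alpha(z,t)^{-p'}\,dz\,dt = s^{\,n+1-\alpha p'/2}\int\limits_{|z|^2<2}\int\limits_{|t|<1}\left(|z|^4+16t^2\right)^{-\alpha p'/4}dz\,dt .
\end{equation*}
The last integral is over a neighbourhood of the origin of a function homogeneous of degree $-\alpha p'$ with respect to dimension $2n+2$. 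It is finite if and only if $\alpha p'<2(n+1)$; this is seen in polar coordinates adapted to the Korányi norm. The necessary condition is meaningless unless this factor is finite for some $s$ (this is the standing local integrability of $v^{-p'}$). So $\alpha p'<2(n+1)$ follows, and the factor is $\asymp s^{(n+1)/p'-\alpha/2}$.

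\emph{The $u$-factor and the two limits.} Here I only need lower bounds, obtained by restricting to subregions where $(n+1)|\lambda|+|w|$ is comparable to a single term. First take $s$ large. Restrict to $|\lambda|<1/s$ and $s<|w|^2<2s$; then $|\lambda|\le 1\lesssim |w|$, so $u_\rho\gtrsim s^{-\rho/2}$ there. The integral is then $\gtrsim s^{-\rho q/2}\cdot s^{n}\cdot s^{-(2n+1)}$, and the $u$-factor is $\gtrsim s^{-\rho/2-(n+1)/q}$. The product is therefore $\gtrsim s^{-\rho/2-(n+1)/q+(n+1)/p'-\alpha/2}$, and boundedness as $s\to\infty$ forces $-(n+1)/q-\alpha/2+(n+1)/p'\le \rho/2$. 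Next take $s$ small. Restrict to $1/(2s)<|\lambda|<1/s$ and $|w|^2<2s$; then $|w|\lesssim s^{1/2}\lesssim |\lambda|$, so $u_\rho\gtrsim s^{\rho}$. The integral is $\gtrsim s^{\rho q}\, s^{-2n-1}\, s^{n}$, and the $u$-factor is $\gtrsim s^{\rho-(n+1)/q}$. The product is $\gtrsim s^{\rho-(n+1)/q+(n+1)/p'-\alpha/2}$, and boundedness as $s\to 0$ forces $-\rho\le -(n+1)/q-\alpha/2+(n+1)/p'$.

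\emph{Main obstacle.} The delicate point is the strictness of the two inequalities in the statement. The simple restriction argument gives only non-strict inequalities. To get strict ones I will try to show that in the borderline cases the full $u$-integral picks up an extra logarithmic factor, or a divergent one, that the subregion estimate misses. For example, when $s\to\infty$ with $\rho q\ge 2n$, the integral of $|w|^{-\rho q}$ near small $|w|$ (with $|\lambda|<1/s$) produces extra growth. If this cannot be arranged uniformly in the parameters, I will state the conclusion with $\le$. A secondary issue is checking that the comparisons $|\lambda|\lesssim|w|$ and $|w|\lesssim|\lambda|$ hold with constants independent of $s$, which is immediate from the chosen regions.
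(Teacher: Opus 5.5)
Your route is the paper's route in substance. Both of you specialize Theorem~\ref{thm-necessary conditions for radial functions} to $(u_\rho,v_\alpha)$, compute the $v$-factor as $s^{(n+1)/p'-\alpha/2}$, and bound the $u$-integral from below by a power of $s$. The paper takes $k\ge 1$ and uses the single bound $u_\rho\ge\bigl(\tfrac{2k+n+1}{s}+\sqrt{2\beta_k s}\bigr)^{-\rho}$ on the whole region. You take $k=0$ and two subregions adapted to $s\to\infty$ and $s\to0$, and your dilation computation of the $v$-factor is cleaner than the paper's $\asymp$. The exponents come out identical. Writing $X=\frac{n+1}{p'}-\frac{n+1}{q}-\frac{\alpha}{2}$, both arguments give exactly $-\rho\le X\le\rho/2$.

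The remaining issue, strictness, is a genuine gap, but the paper does not fill it either. Its concluding claim is that $\sup_{s>0}s^{X}\bigl(\tfrac{2k+n+1}{s}+\sqrt{2\beta_k s}\bigr)^{-\rho}<\infty$ if and only if $-\rho<X<\rho/2$. This is false at both endpoints:
\begin{itemize}
\item at $X=-\rho$ the expression equals $\bigl(2k+n+1+\sqrt{2\beta_k}\,s^{3/2}\bigr)^{-\rho}$;
\item at $X=\rho/2$ it equals $\bigl((2k+n+1)s^{-3/2}+\sqrt{2\beta_k}\bigr)^{-\rho}$.
\end{itemize}
Both are bounded in $s$.

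Your proposed repair cannot close the gap in general. Let $I(s)$ be the $u$-integral in \eqref{necessary conditions k=0}. If $\rho q<2n$, the bound $((n+1)|\lambda|+|w|)^{-\rho q}\le|w|^{-\rho q}$ gives, for every $s>0$,
\[
I(s)\le\int\limits_{|\lambda|<1/s}|\lambda|^{2n}\,d\lambda\int\limits_{|w|^2<2s}|w|^{-\rho q}\,dw\asymp s^{-(n+1)-\rho q/2}.
\]
If $\rho q<2n+1$, the bound $((n+1)|\lambda|+|w|)^{-\rho q}\le((n+1)|\lambda|)^{-\rho q}$ gives $I(s)\lesssim s^{\rho q-(n+1)}$ for every $s>0$.

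So for $\rho q<2n$ your two lower bounds are sharp up to constants, and no logarithm appears. The condition of Theorem~\ref{thm-necessary conditions for radial functions} then holds precisely when $-\rho\le X\le\rho/2$, endpoints included. This is true for every $k$, since the $k\ge1$ integrals obey the same bounds with different constants. The extra growth you anticipate occurs only in two ranges:
\begin{itemize}
\item at the upper endpoint, when $\rho q\ge 2n$ (a logarithm at $\rho q=2n$, a larger power or divergence beyond);
\item at the lower endpoint, when $\rho q\ge 2n+1$.
\end{itemize}

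The dilation $f\mapsto\delta_r f$ also yields only the closed conditions: $r\to\infty$ forces $X\ge-\rho$, and $r\to0$ forces $X\le\rho/2$. So what you, and the paper, actually prove is the statement with $\le$ in place of $<$. Exact evaluation of $I(s)$ recovers strictness only in the ranges above. In general the strict form needs an input beyond Theorem~\ref{thm-necessary conditions for radial functions} and scaling, or it should be weakened accordingly.
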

\begin{proof}
    \ Let \(1<p,q<\infty\) be fixed. Now recall from Theorem \ref{thm-necessary conditions for radial functions}, the necessary conditions on functions \(u_{\rho}\) and \(v_{\alpha}\) state that for each \(k\geq 1\), 
     \begin{equation*}
\sup_{s>0}\left(\int\limits_{R_{k,s}}\int\limits_{|w|^2< 2\beta_k s} u_{\rho}((\la,(2k+n)|\la|,w)^q|\la|^{2n}dw\;d\la\right)^{1/q}\left( \int\limits_{|z|^2 < 2\beta_k s}\int\limits_{|t|<s}v_{\alpha}(z,t)^{-p'} dz\;dt\right)^{1/p'}< \infty,   
    \end{equation*}
    where \(\beta_k=\frac{j^2_{n-1,1}}{4M_k}\), \(R_{k,s}=\{(\la,(2k+n)|\la|): |\la|<1/s\}\) and for \(k=0\),
    \begin{equation*}
    \sup_{s>0}\left(\int\limits_{R_{0,s}}\int\limits_{|w|^2< 2 s} u_{\rho}((\la,n|\la|,w)^q|\la|^{2n}dw\;d\la\right)^{1/q}\left( \int\limits_{|z|^2 < 2 s}\int\limits_{|t|<s}v_{\alpha}(z,t)^{-p'} dz\;dt\right)^{1/p'}< \infty.   
    \end{equation*} 
  Let \(k\geq 1\).
To derive these conditions, we first compute the integrals appearing in the above expression.
We proceed by finding a lower bound for
\[I_{k,s}=\left(\int\limits_{R_{k,s}}\int\limits_{|w|^2< 2 \beta_k s} u_{\rho}((\lambda,(2k+n)|\lambda|,w)^q|\lambda|^{2n}dw\;d\lambda\right)^{1/q}.\]
\noindent For \(|\lambda|<1/s\) and \(|w|<\sqrt {2\beta_k s}\), one can see that 
$$
(u_\rho(a,w))^q=((2k+n+1)|\lambda|+|w|)^{-\rho q} \geq ((2k+n+1)/s + \sqrt{2 \beta_k s})^{-\rho q}
$$
Thus 
\begin{align*}
 I_{k,s} &\geq  \left( \frac{2k+n+1}{s}+\sqrt{2 \beta_k s} \right)^{-\rho} \left(\int\limits_{R_{k,s}}\int\limits_{|w|^2< 2 \beta_k s} |\lambda|^{2n}dw\;d\lambda\right)^{1/q}\\&\asymp\left( \frac{2k+n+1}{s}+\sqrt{2 \beta_k s} \right)^{-\rho}s^{-\frac{(n+1)}{q}}. 
\end{align*}
Moreover, we also have 
 \begin{align*}
     J_{k,s}&=\left( \int\limits_{|z|^2 < 2\beta_k s}\int\limits_{|t|<s}v_{\alpha}(z,t)^{-p'} dz\;dt\right)^{1/p'}\nonumber\\
     &=\left( \int\limits_{|z|^2 < 2\beta_k s}\int\limits_{|t|<s}(|z|^4+16t^2)^{-p'\alpha/4}dz\;dt\right)^{1/p'}\nonumber\\
     &\asymp s^{-\frac{\alpha }{2}+\frac{n+1}{p'}}.
 \end{align*}
 This holds provided \(\alpha p'<2(n+1)\).

 \noindent Combining the estimates for \(I_{k,s}\) and \(J_{k,s}\), we get
\begin{align*}
&\sup_{s>0}\left(\int\limits_{R_{k,s}}\int\limits_{|w|^2< 2\beta_k s} u_{\rho}((\lambda,(2k+n)|\lambda|,w)^q|\lambda|^{2n}dw\;d\lambda\right)^{1/q}\left( \int\limits_{|z|^2 < 2\beta_k s}\int\limits_{|t|<s}v_{\alpha}(z,t)^{-p'} dz\;dt\right)^{1/p'}\\
&\gtrsim \sup_{s>0} s^{-\frac{\alpha }{2}+\frac{n+1}{p'}}\left( \frac{2k+n+1}{s}+\sqrt{2 \beta_k s} \right)^{-\rho}s^{-\frac{(n+1)}{q}}
\end{align*}
The above is finite if and only if 
$$-\rho < -\frac{(n+1)}{q}-\frac{\alpha}{2}+\frac{(n+1)}{p'}<\frac{\rho}{2}.$$
\end{proof}
\begin{rmk}
    In deriving the necessary conditions in the preceding theorem, we used a lower bound for the integral. 
While this approach identifies conditions that must be satisfied, it does not capture all the necessary constraints that would follow from evaluating the integral explicitly.
\end{rmk}
\noindent The next theorem concerns the necessary conditions for the pair of weights \(v_{\alpha}\) and \(u_{\sigma,\rho}\).
\begin{thm}
   Suppose that for the weights \(u_{\sigma,\rho}\), \(v_{\alpha}\)  Pitt's inequality \eqref{pitts inequality Hn} holds for \(1<p,q<\infty\). Then the following conditions necessarily hold :
   \begin{equation*}
       0<\alpha,\; 0<\sigma ,\; 0<\rho,\;\alpha p'<2(1+n),\;q\rho<2n,\; 
   \end{equation*}    
  and 
   \begin{equation*}
\begin{aligned}
-\frac{\alpha}{2}+\frac{n+1}{p'}+\frac{2n-q\rho}{2q}+\frac{-2n+q\sigma-1}{q} 
&\ge 0, 
&& \text{if } q\sigma < 2n+1, \\
 -\frac{\alpha}{2}+\frac{n+1}{p'}+\frac{2n-q\rho}{2q}
&> 0, 
&& \text{if } q\sigma = 2n+1, \\
 -\frac{\alpha}{2}+\frac{n+1}{p'}+\frac{2n-q\rho}{2q}
&\ge 0, 
&& \text{if } q\sigma > 2n+1.
\end{aligned}
\end{equation*}
\end{thm}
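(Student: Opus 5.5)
We follow the route of Theorem \ref{thm 1 necessary conditions}. The radial necessary condition of Theorem \ref{thm-necessary conditions for radial functions} is specialised to the pair $u_{\sigma,\rho}$, $v_\alpha$, and we track the power of $s$ produced by each factor. We fix $1<p,q<\infty$ and one $k\ge 1$; the case $k=0$ gives nothing new. The conditions to analyse are \eqref{necessary conditions k>=1} and \eqref{necessary conditions k=0} with $u=u_{\sigma,\rho}$. The $v$-factor is already known from the proof of Theorem \ref{thm 1 necessary conditions}:
\[
J_{k,s}=\Bigl(\int\limits_{|z|^2<2\beta_k s}\int\limits_{|t|<s}v_\alpha(z,t)^{-p'}\,dz\,dt\Bigr)^{1/p'}\asymp s^{-\frac{\alpha}{2}+\frac{n+1}{p'}}.
\]
Finiteness of this quantity (local integrability of $v_\alpha^{-p'}$, which is implicitly assumed in Theorem \ref{thm-necessary conditions for radial functions}) forces $\alpha p'<2(n+1)$. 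The conditions $\alpha,\sigma,\rho>0$ are simply part of the definition of the weights.

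The main work is the $u$-factor. Since $u_{\sigma,\rho}$ vanishes for $|\lambda|\le 1$, the $\lambda$-integral over $R_{k,s}$ runs over $1<|\lambda|<1/s$. It is therefore nonzero only for $s<1$, and the supremum reduces to the regime $s\to 0$. The expression factorises exactly:
\[
I_{k,s}^q=(2k+n)^{-\sigma q}\int\limits_{1<|\lambda|<1/s}|\lambda|^{2n-\sigma q}\,d\lambda\int\limits_{|w|^2<2\beta_k s}|w|^{-\rho q}\,dw .
\]
The $w$-integral is finite if and only if $q\rho<2n$, which gives that necessary condition (otherwise $I_{k,s}=\infty$ and the supremum cannot be finite). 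In that case it is $\asymp s^{(2n-q\rho)/2}$, contributing $s^{\frac{2n-q\rho}{2q}}$ after taking the $1/q$ power. The $\lambda$-integral splits into three cases according to the sign of $2n+1-\sigma q$:
\begin{itemize}
\item if $q\sigma<2n+1$ it is $\asymp s^{-(2n+1-\sigma q)}$ as $s\to0$;
\item if $q\sigma=2n+1$ it is $\asymp \log(1/s)$;
\item if $q\sigma>2n+1$ it tends to a positive constant.
\end{itemize}

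Multiplying by $J_{k,s}$, the product behaves for small $s$ like $s^{E}$ times the appropriate $\lambda$-factor, where
\[
E=-\frac{\alpha}{2}+\frac{n+1}{p'}+\frac{2n-q\rho}{2q}.
\]
Boundedness as $s\to0$ then yields the three stated inequalities:
\begin{itemize}
\item for $q\sigma<2n+1$ the exponent becomes $E+\frac{-2n+q\sigma-1}{q}\ge 0$;
\item for $q\sigma=2n+1$ we need $s^{E}\log(1/s)^{1/q}$ bounded, which forces $E>0$ strictly;
\item for $q\sigma>2n+1$ we need $E\ge 0$.
\end{itemize}
The main obstacle is bookkeeping rather than any new idea. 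One has to justify that the restriction $|\lambda|>1$ kills the large-$s$ regime, so that only $s\to0$ matters. One also has to handle the logarithmic borderline case carefully, since this is exactly what makes that inequality strict.
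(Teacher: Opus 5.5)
Your proposal is correct and follows essentially the same route as the paper. You specialise the radial necessary condition of Theorem~\ref{thm-necessary conditions for radial functions} for a fixed $k\ge 1$ and note that the $u_{\sigma,\rho}$-factor vanishes for $s\ge 1$. It factorises into a $w$-integral ($\asymp s^{(2n-q\rho)/2}$, finite iff $q\rho<2n$) and a $\lambda$-integral over $1<|\lambda|<1/s$, split into three cases according to $q\sigma$ versus $2n+1$. Combining this with $J_{k,s}\asymp s^{-\alpha/2+(n+1)/p'}$, the three conditions follow from boundedness as $s\to 0$, and the paper does the same. As in the paper, $\alpha,\sigma,\rho>0$ and $\alpha p'<2(n+1)$ enter as standing assumptions: the first from the definition of the weights, the second from local integrability of $v_\alpha^{-p'}$. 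They are not genuinely derived there either.
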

\begin{proof}
   \ Let \(1<p,q<\infty\) be fixed.
   Let \(k\geq 1\).
To derive necessary conditions, we first compute the integrals appearing in \eqref{necessary conditions k>=1}.
\noindent For \(s>0\), let 
    \begin{align*}       I_{k,s}&=\left(\int\limits_{R_{k,s}}\int\limits_{|w|^2< 2 \beta_k s} u_{\sigma,\rho}((\la,n|\la|,w)^q|\la|^{2n}dw\;d\la\right)^{1/q}\\
        &\asymp \begin{cases}
            \left( \displaystyle\int\limits_{1<|\la|<1/s}\int\limits_{|w|^2<2\beta_k s}|w|^{-q\rho}|\la|^{-q\sigma}|\la|^{2n}dw\;d\la \right)^{1/q} & \text{ if } s<1,\\
            0 & \text{ if } s\geq 1
        \end{cases} \\
         &\asymp \begin{cases}
     \left(s^{(2n-q\rho)/2} \displaystyle\int\limits_{1<|\lambda|<1/s}|\lambda|^{2n-q\sigma}d\lambda \right)^{1/q} & \text{ if } s<1,\\
        0 & \text{ if } s \geq 1.
        \end{cases}
    \end{align*}
The above holds when \(q\rho<2n\). Thus we are left with the case when \(s<1\).
Furthermore, 
\begin{align*}
    \displaystyle\int\limits_{1<|\lambda|<1/s}|\lambda|^{2n-q\sigma}d\lambda&= \begin{cases}
        s^{-2n+q\sigma-1}-1 & \text{ if } 2n-q\sigma \neq -1,\\
        \operatorname{log(1/s)} & \text{ if } 2n-q\sigma=-1.
    \end{cases}
\end{align*}
Thus, for \(s<1\),
\begin{equation*}
    I_{k,s}\asymp\begin{cases}
        \left(s^{2n-q\rho)/2} \left(\frac{s^{-2n+q\sigma-1}-1}{2n-q\sigma+1}\right) \right)^{1/q} & \text{ if } 2n-q\sigma \neq -1,\\
        \left( s^{2n-q\rho)/2} \operatorname{log}(1/s)\right)^{1/q} & \text{ if } 2n-q\sigma=-1.
    \end{cases}
\end{equation*}

\noindent Using the proof of Theorem \ref{thm 1 necessary conditions}, we have
 \begin{align*}
     J_{k,s}&=\left( \int\limits_{|z|^2 < 2\beta_k s}\int\limits_{|t|<s}v_{\alpha}(z,t)^{-p'} dz\;dt\right)^{1/p'}\nonumber\\
     &\asymp s^{-\frac{\alpha }{2}+\frac{n+1}{p'}},
 \end{align*}
provided \(\alpha p'<2(n+1)\).

\noindent Again combining the estimates for \(I_{k,s}\) and \(J_{k,s}\), we deduce
\begin{align}
&\sup_{s > 0} 
\Bigg( \int\limits_{R_{k,s}} \int\limits_{|w|^2 < 2\beta_k s} u_{\sigma,\rho}(\lambda, (2k+n)|\lambda|, w)^q \, |\lambda|^{2n} \, dw \, d\lambda \Bigg)^{1/q} 
\Bigg( \int\limits_{|z|^2 < 2\beta_k s} \int\limits_{|t| < s} v_{\alpha}(z,t)^{-p'} \, dz \, dt \Bigg)^{1/p'} \nonumber \\
&= \sup_{0<s<1} 
\Bigg( \int\limits_{R_{k,s}} \int\limits_{|w|^2 < 2\beta_k s} u_{\sigma,\rho}(\lambda, (2k+n)|\lambda|, w)^q \, |\lambda|^{2n} \, dw \, d\lambda \Bigg)^{1/q} 
\Bigg( \int\limits_{|z|^2 < 2\beta_k s} \int\limits_{|t| < s} v_{\alpha}(z,t)^{-p'} \, dz \, dt \Bigg)^{1/p'} \nonumber\\
&\asymp \displaystyle\sup_{0<s<1} \begin{cases}
 s^{-\frac{\alpha}{2}+\frac{n+1}{p'}+\frac{2n-q\rho}{2q}} \left(\frac{s^{-2n+q\sigma-1}-1}{2n-q\sigma+1}\right)^{1/q} & \text{ if } 2n-q\sigma \neq 1,\\
 s^{-\frac{\alpha}{2}+\frac{n+1}{p'}+\frac{2n-q\rho}{2q}} \left(\operatorname{log}(1/s) \right)^{1/q} & \text { if } 2n-q\sigma =-1.\label{finiteness of supremum}
\end{cases}
\end{align}
To conclude the conditions that imply finiteness of the above quantity, we break our study in three cases.
\begin{enumerate}
    \item Case: \(2n-q\sigma>-1\)\\
    In this case, the supremum in \eqref{finiteness of supremum} is finite if and only if 
    \begin{equation}
        -\frac{\alpha}{2}+\frac{n+1}{p'}+\frac{2n-q\rho}{2q}+\frac{-2n+q\sigma-1}{q} \geq 0.
    \end{equation}
    \item Case: \(2n-q\sigma<-1\)\\
    In this case, the supremum in \eqref{finiteness of supremum} is finite if and only if 
    \begin{equation}
        -\frac{\alpha}{2}+\frac{n+1}{p'}+\frac{2n-q\rho}{2q} \geq 0.
    \end{equation}
    \item Case: \(2n-q\sigma=-1\)\\
    In this case, the supremum in \eqref{finiteness of supremum} is finite if and only if 
    \begin{equation}
        -\frac{\alpha}{2}+\frac{n+1}{p'}+\frac{2n-q\rho}{2q} > 0.
    \end{equation}
\end{enumerate}
\end{proof}
\begin{rmk}
    Even though we have obtained sufficient and necessary conditions for the above choice of weights, however, we have not got if and only if result as in the case of \({\mathbb{R}^n}\).
\end{rmk}
In the Euclidean case, if we let $ f_r(x) = f(rx), r>0 $ then $ \hat{f_r}(\xi) = r^{-n}\hat{f}(r^{-1}\xi).$  This allows us to get a necessary condition for the weighted norm inequality 
$$ \left( \int_{\mathbb R^n} |\hat{f}(\xi)|^q u(\xi) d\xi \right)^{1/q} \leq C \left( \int_{\mathbb R^n} |f(x)|^p v(x) dx \right)^{1/p}, $$
when $ u $ and $ v $ are both homogeneous under the above dilation. We can get prove a similar result for the Strichartz Fourier transform.

From the definition, it is easy to see that the Strichartz Fourier transform has the following property. Let $  \delta_rf(z,t) = f(rz, r^2t) $ be the non-isotropic dilation of the function $ f.$  Then it follows from the definition that
$ \widetilde{\delta_rf}(a,z) =  r^{-Q} \widetilde{f}(r^{-2} a, r z) =: r^{-Q} D_{r^{-1}} \widetilde{f}(a,z).$

\begin{thm} Let the weights $ u $ and $ v $ be such that $  \delta_r v(z,t) = r^{\alpha} v(z,t) $ and $ D_r u(a,z) = r^{\beta} u(a,z).$  A necessary condition for the validity of the inequality \eqref{pitts inequality Hn} is 
that  $$ \frac{1}{p} + \frac{1}{q} = 1-\frac{(\alpha+\beta)}{Q}.$$
\end{thm}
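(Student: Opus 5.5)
The approach is a scaling argument: apply \eqref{pitts inequality Hn} to the dilates $\delta_r f$ of one fixed test function and compare the powers of $r$ on the two sides as $r$ ranges over $(0,\infty)$. We fix $f\in\mathcal{S}(\mathbb{H}^n)$ for which both sides of \eqref{pitts inequality Hn} are finite and the left side is nonzero, and we write $A(f)$ for the left side and $B(f)$ for the right side. Applying the inequality to $\delta_r f$ and rescaling reduces each side to a power of $r$ times $A(f)$ or $B(f)$. Since $r>0$ is arbitrary, the two exponents must agree.

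First, the function side. Substituting $z\mapsto r^{-1}z$ and $t\mapsto r^{-2}t$ gives $dz\,dt\mapsto r^{-Q}dz\,dt$ with $Q=2n+2$, and $v(r^{-1}z,r^{-2}t)=\delta_{r^{-1}}v(z,t)=r^{-\alpha}v(z,t)$. Hence
\[
\Big(\int_{\mathbb{H}^n}|\delta_rf|^p v^p\,dz\,dt\Big)^{1/p}=r^{-\alpha-Q/p}B(f).
\]

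Second, the transform side. We use the stated identity $\widetilde{\delta_rf}(a,w)=r^{-Q}\tilde f(r^{-2}a,rw)$. This is compatible with the normalisation $\tilde f=\frac{k!(n-1)!}{(k+n-1)!}\widehat f$, because $a\mapsto r^{-2}a$ maps each ray $R_k$ to itself and so leaves $k$ unchanged. In the integral against $dw\,d\nu_2(a)$ we substitute $\lambda=r^2\lambda'$ and $w=r^{-1}w'$. The measure then transforms as follows:
\begin{itemize}
\item the weights $\big(\frac{(k+n-1)!}{k!(n-1)!}\big)^2$ are unchanged;
\item $|\lambda|^{2n}d\lambda=r^{4n+2}|\lambda'|^{2n}d\lambda'$;
\item $dw=r^{-2n}dw'$.
\end{itemize}
The net factor is $r^{2n+2}=r^{Q}$. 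Meanwhile $u(r^2a',r^{-1}w')=D_ru(a',w')=r^{\beta}u(a',w')$. Therefore
\[
\Big(\int_{\Omega\times\mathbb{C}^n}|\widetilde{\delta_rf}|^q u^q\,dw\,d\nu_2\Big)^{1/q}=r^{-Q+\beta+Q/q}A(f).
\]

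Inserting both identities into \eqref{pitts inequality Hn} gives
\[
r^{\beta-Q+Q/q}A(f)\le C\,r^{-\alpha-Q/p}B(f)\qquad\text{for all } r>0.
\]
Since $0<A(f)$ and $B(f)<\infty$, letting $r\to0$ and $r\to\infty$ forces $\beta-Q+Q/q=-\alpha-Q/p$. This is exactly $\frac1p+\frac1q=1-\frac{\alpha+\beta}{Q}$.

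The main obstacle is the existence of a test function with $0<A(f)$ and $B(f)<\infty$; the scaling computation itself is routine bookkeeping of the measure $d\nu_2\,dw$ under $a\mapsto r^{-2}a$. If no such $f$ exists, the inequality holds trivially and no condition can be extracted, so the statement implicitly assumes the weights are non-degenerate. I would first try $f\in C_c^\infty$ supported away from the singular set of $v$, with $v$ locally $L^p$ there, so that $B(f)<\infty$. The requirement $A(f)>0$ then only needs $\tilde f\neq0$ on a set where $u>0$. By injectivity of the transform (Plancherel) and homogeneity of $u$, this should be arrangeable by a translate or a dilate of $f$. I would state this non-degeneracy as the standing interpretation of the theorem.
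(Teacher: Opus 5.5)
Your proposal is correct and follows the paper's proof essentially verbatim. Both arguments apply \eqref{pitts inequality Hn} to $\delta_r f$, obtain the factor $r^{-Q/p-\alpha}$ on the function side and $r^{Q/q-Q+\beta}$ on the transform side (via $\widetilde{\delta_r f}(a,w)=r^{-Q}\tilde f(r^{-2}a,rw)$ and the substitution in $dw\,d\nu_2$), and then equate the exponents. Your explicit $r\to 0$, $r\to\infty$ argument, together with the observation that one needs some $f$ with $0<\|u\tilde f\|_q$ and $\|vf\|_p<\infty$ (so the weights must be non-degenerate), makes precise a point the paper leaves implicit when it simply invokes the independence of $C$ from $f$.
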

\begin{proof} Let \(u\) and \(v\) be weights with the condition given in hypothesis for which Pitt's inequality \eqref{pitts inequality Hn} holds. To establish the conditions on \(\alpha\) and \(\beta\), we replace \(f\) by \(\delta_rf\) in \eqref{pitts inequality Hn}. 
Using change of variables, it is easy to see that 
\begin{align}
  \left(  \int\limits_{\mathbb{H}^n} \left| \delta_rf(z,t)\right|^p v
  (z,t)^p\,dz\,dt\right)^{1/p}&=\left(  \int\limits_{\mathbb{H}^n} \left| f(rz,r^2t)\right|^p v
  (z,t)^p\,dz\,dt\right)^{1/p}\nonumber\\
  &=\left(  \int\limits_{\mathbb{H}^n} r^{-Q}\left| f(z,t)\right|^p \left(\delta_{r^{-1}}v
  (z,t)\right)^p\,dz\,dt\right)^{1/p}\nonumber\\
  &=\left(  \int\limits_{\mathbb{H}^n} r^{-Q-p\alpha}\left| f(z,t)\right|^p v
  (z,t)^p\,dz\,dt\right)^{1/p}\nonumber\\
  &=r^{-Q/p-\alpha}\left(  \int\limits_{\mathbb{H}^n} \left| f(z,t)\right|^p v
  (z,t)^p\,dz\,dt\right)^{1/p}\label{inequality after dilation on Hn}.
\end{align}
Furthermore, using the relation $ \widetilde{\delta_rf}(a,z) =  r^{-Q} \widetilde{f}(r^{-2} a, r z)$ and applying change of variables we obtain 
\begin{align}
    \left( \int\limits_{\Omega \times \mathbb{C}^n}\left|\widetilde{\delta_rf}(a,w)\right|^q u(a,w)^q dw\;d\nu_2(a)  \right )^{1/q}&=\left( \int\limits_{\Omega \times \mathbb{C}^n}r^{-qQ}\left|\tilde{f}(r^{-2}a,rw)\right|^q u(a,w)^q dw\;d\nu_2(a)  \right )^{1/q}\nonumber\\
    &=\left( \int\limits_{\Omega \times \mathbb{C}^n}r^{-qQ+Q}\left|\tilde{f}(a,w)\right|^q \left(D_ru(a,w)\right)^q dw\;d\nu_2(a)  \right )^{1/q}\nonumber\\
    &=\left( \int\limits_{\Omega \times \mathbb{C}^n}r^{-qQ+Q+q\beta}\left|\tilde{f}(a,w)\right|^q u(a,w)^q dw\;d\nu_2(a)  \right )^{1/q}\nonumber\\
    &=r^{Q/q-Q+\beta}\left( \int\limits_{\Omega \times \mathbb{C}^n}\left|\tilde{f}(a,w)\right|^q u(a,w)^q dw\;d\nu_2(a)  \right )^{1/q}\label{inequality after dilation on Omega x Cn}.
\end{align}
We know that the constant appearing in \eqref{pitts inequality Hn} is independent of choice of \(f\). Thus from \eqref{inequality after dilation on Hn} and \eqref{inequality after dilation on Omega x Cn}, we deduce that 
\begin{equation*}
    -\frac{Q}{p}-\alpha=\frac{Q}{q}-Q+\beta,
\end{equation*}
which implies the necessary condition 
\begin{equation*}
    \frac{1}{p}+\frac{1}{q}=1-\frac{(\alpha+\beta)}{Q}.
\end{equation*}
\end{proof}

\section{Paley's Inequality}\label{sec 6}
In the context of \(\mathbb{R}^n\), the Hausdorff-Young inequality and Pitt's inequality examine the \(L^p-L^{p'}\) and \(L^p-L^q\) boundedness of the Fourier transform respectively. By contrast, Paley's inequality addresses the boundedness of the Fourier transform from \(L^p\) to \(L^p_{\nu}\) for a specified measure \(\nu\). In 1960, H\"ormander \cite[Theorem 1.10]{Hormander-1960-paley}, established the following result :
\begin{theorem}\label{hormander-paley}
    Let \(\phi\) be a positive measurable function in \(L^{1,\infty}(\mathbb{R}^n)\).
    Then for \(1<p \leq 2\), we have 
   $$
   \left(\int\limits_{\mathbb{R}^n}|\widehat{f}(\xi)|^p\phi(\xi)^{2-p}d\xi\right)^{1/p}\leq C_p\lVert f \rVert_p.
   $$
\end{theorem}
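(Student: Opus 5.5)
The plan is to interpolate between the trivial $L^1$ endpoint and the Plancherel endpoint. I will use the Marcinkiewicz interpolation theorem with change of measure on the weighted space $L^p(\mathbb{R}^n,\phi^2\,d\xi)$, acting on the sublinear operator
\[
Af(\xi)=\widehat f(\xi)/\phi(\xi).
\]
The target inequality then becomes
\[
\int|Af|^p\phi^2\,d\xi\le C\|f\|_p^p,
\]
because $|\widehat f|^p\phi^{2-p}=|Af|^p\phi^2$. So it suffices to prove that $A$ is of weak type $(1,1)$ and of weak type $(2,2)$ from $L^p(\mathbb{R}^n,dx)$ into $L^p(\mathbb{R}^n,\mu)$ with $d\mu=\phi^2\,d\xi$. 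Marcinkiewicz then gives strong type $(p,p)$ for $1<p<2$, and the case $p=2$ is Plancherel itself.

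For the $(2,2)$ endpoint I will simply compute. We have
\[
\int|Af|^2\phi^2\,d\xi=\int|\widehat f|^2\,d\xi=C\|f\|_2^2,
\]
so strong type, and hence weak type, holds.

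For the weak $(1,1)$ endpoint, fix $f\in L^1$ and $\gamma>0$. I will use $|\widehat f|\le\|f\|_1$, so on the set $\{|Af|>\gamma\}$ we get $\phi<\|f\|_1/\gamma=:s$. Hence
\[
\mu\{|Af|>\gamma\}\le\int_{\{\phi<s\}}\phi^2\,d\xi .
\]
I will bound this by the layer-cake formula,
\[
\int_{\{\phi<s\}}\phi^2\,d\xi\le\int_0^s 2\tau\,|\{\tau<\phi\}|\,d\tau .
\]
The hypothesis $\phi\in L^{1,\infty}$ gives $|\{\phi>\tau\}|\le C_\phi/\tau$, so the right side is at most $2C_\phi s=2C_\phi\|f\|_1/\gamma$. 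This is exactly the weak $(1,1)$ bound.

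The step needing care is this weak $(1,1)$ estimate, specifically arranging the change of measure so that only the weak-$L^1$ size of $\phi$ enters. One also has to check that $A$ is well defined where $\phi>0$; since $\phi$ is positive, the set $\{\phi=0\}$ carries no $\mu$-mass, so this is harmless. Finally, the Marcinkiewicz constants depend only on $p$ and $C_\phi$, which yields $C_p$. For $p=2$ the inequality is Plancherel, and the statement follows.
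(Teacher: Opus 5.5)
Your argument is correct and is essentially the one the paper uses. The paper cites this $\mathbb{R}^n$ result from H\"ormander without proof, but its proof of the Strichartz analogue is explicitly modeled on the Euclidean case and runs exactly along your lines: the operator $\widehat f/\phi$ into $L^p(\phi^2\,d\xi)$, Plancherel for the $(2,2)$ endpoint, the $L^1\to L^\infty$ bound plus the layer-cake estimate $\int_{\{\phi<s\}}\phi^2\,d\xi\le 2C_\phi s$ for weak $(1,1)$, and then Marcinkiewicz. One cosmetic slip: the endpoints should read $L^1\to L^{1,\infty}(\mu)$ and $L^2\to L^{2,\infty}(\mu)$, not ``from $L^p(dx)$ into $L^p(\mu)$''.
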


Here, \(|\cdot|\) denotes the measure with respect to the Plancherel measure. 

We aim to establish a corresponding result for the Strichartz Fourier transform on the Heisenberg group, for which we employ the Marcinkiewicz interpolation theorem \cite[Theorem 4.13, Corollary 4.14]{bennett's_book}.

\begin{theorem}\label{Marcinkiewicz}
    Let \((X,\mu)\) and \((Y,\nu)\) be two measure spaces and \(1 \leq p_1 < p_2 < \infty\) and \(1 \leq q_1,q_2 \leq \infty, \; q_1 \neq q_2,\; p_i\leq q_i,\; \{i=1,2\}\). Suppose T is a quasilinear operator on \(L^{p_1,1}(X,\mu)+L^{p_2,1}(X,\mu)\) taking values in \((Y,\nu)\) such that \(T\) is simultaneously weak type \((p_1,q_1)\) and \((p_2,q_2)\), then for any \(t \in (0,1)\), and 
    $$
    \frac{1}{p}=\frac{1-t}{p_1}+\frac{t}{p_2},\; \frac{1}{q}=\frac{1-t}{q_1}+\frac{t}{q_2},
    $$
    the operator is bounded from \(L^p\) to \(L^q\).
\end{theorem}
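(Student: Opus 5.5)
The plan is to derive Theorem~\ref{Marcinkiewicz} from Calder\'on's characterization, Theorem~\ref{calderon's operator theorem}, followed by two applications of the weighted Hardy inequalities of Theorem~\ref{weighted hardy} with power weights. Since $T$ is simultaneously of weak types $(p_1,q_1)$ and $(p_2,q_2)$, and $L^{p_i,1}\subseteq L^{p_i,\infty}$ gives boundedness from $L^{p_i,1}$ to $L^{q_i,\infty}$, Theorem~\ref{calderon's operator theorem} makes $T$ of joint weak type $(p_1,q_1;p_2,q_2)$. That is,
\[
(Tf)^*(t)\le C\,S_\sigma f^*(t), \qquad t>0,
\]
where $\sigma$ is the segment joining $(1/p_1,1/q_1)$ and $(1/p_2,1/q_2)$ and $m$ is its slope. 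By \eqref{norm in dec arr}, $\|Tf\|_{L^q(Y)}=\|(Tf)^*\|_{L^q(0,\infty)}$ and $\|f\|_{L^p(X)}=\|f^*\|_{L^p(0,\infty)}$. It therefore suffices to prove that $S_\sigma$ is bounded from $L^p(0,\infty)$ to $L^q(0,\infty)$ on non-negative decreasing functions, and in fact on all non-negative functions.

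Next, note that $q<\infty$: the exponents $q_1\neq q_2$ cannot both be infinite, and $t\in(0,1)$. Also $p<\infty$ because $p_2<\infty$. Moreover
\[
\frac1q=\frac{1-t}{q_1}+\frac{t}{q_2}\le\frac{1-t}{p_1}+\frac{t}{p_2}=\frac1p,
\]
so $p\le q$, which is the range in which Theorem~\ref{weighted hardy} applies. I split $S_\sigma f^*$ into its two pieces and, in each, substitute $r=t^m$ in the outer integral (if $m<0$ the orientation flips and the roles of parts (a) and (b) interchange).

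For the first piece, the outer $L^q$ norm becomes $\bigl(\int_0^\infty [u_1(r)\int_0^r f_1(s)\,ds]^q\,dr\bigr)^{1/q}$ with
\[
f_1(s)=s^{1/p_1-1}f^*(s),\qquad u_1(r)=c\,r^{\gamma}
\]
for an explicit exponent $\gamma$ built from $m$, $q_1$ and $q$, and with $v_1(s)=s^{1-1/p_1}$ so that $v_1f_1=f^*$. Theorem~\ref{weighted hardy}(a) then reduces boundedness to finiteness of
\[
\sup_{s>0}\Bigl(\int_s^\infty r^{\gamma q}\,dr\Bigr)^{1/q}\Bigl(\int_0^s r^{-(1-1/p_1)p'}\,dr\Bigr)^{1/p'}.
\]
The second piece is handled in the same way with Theorem~\ref{weighted hardy}(b), using $f_1(s)=s^{1/p_2-1}f^*(s)$, $v_1(s)=s^{1-1/p_2}$ and the tail integrals.

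The main obstacle, really the only one, is checking these power-weight suprema. Both integrals in each supremum must converge, and the product must be a power $s^{0}$. The convergence conditions amount to $1/p$ lying strictly between $1/p_1$ and $1/p_2$, and likewise for $1/q$, which is exactly where $t\in(0,1)$ and $q_1\neq q_2$ are used. The homogeneity exponent vanishes precisely because $(1/p,1/q)$ lies on the segment $\sigma$, i.e. $1/q_1-1/q=m(1/p_1-1/p)$. I would first carry out this exponent bookkeeping in the case $m>0$ and then treat $m<0$ by the orientation swap noted above. Combining the two Hardy bounds gives $\|S_\sigma f^*\|_q\le C\|f^*\|_p$, and hence $\|Tf\|_q\le C\|f\|_p$.
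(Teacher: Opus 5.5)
Your proposal is correct and is essentially the standard proof behind the result, which the paper does not prove but simply cites (Bennett--Sharpley, Theorem~4.13 and Corollary~4.14): Calder\'on's joint weak type bound $(Tf)^*\le C\,S_\sigma f^*$, followed by Hardy inequalities for the two pieces of $S_\sigma$. The only cosmetic difference is that you use the two-exponent criterion of Theorem~\ref{weighted hardy} to go directly from $L^p$ to $L^q$, whereas the standard route proves $L^{p,r}\to L^{q,r}$ and then uses $L^{q,p}\subseteq L^{q}$ for $p\le q$.

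There are two small slips. First, the embedding that turns an ordinary weak-type hypothesis into the $L^{p_i,1}\to L^{q_i,\infty}$ form is $L^{p_i,1}\subseteq L^{p_i}$, not $L^{p_i,1}\subseteq L^{p_i,\infty}$. Second, after the substitution $r=t^m$ the first piece still involves $\int_0^r$ and the second still involves $\int_r^\infty$ for either sign of $m$. So parts (a) and (b) never interchange; only the factor $|1/m|$ appears, and your convergence and homogeneity checks go through as you describe.
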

We now state Paley's inequality for the Strichartz Fourier Transform.

\begin{theorem}
    Let \(\phi\) be a positive function on \(\Omega \times \mathbb{C}^n\) and suppose that \(\phi \in L^{1,\infty}(\Omega \times \mathbb{C}^n, dw\;d\nu_2)\). Then for \(1< p \leq 2\), there exists a constant \(C_p\) such that
    \begin{equation}
        \left( \int\limits_{\Omega \times \mathbb{C}^n}\left | \tilde{f}(a,w) \right|^p\phi^{2-p}(a,w)dw \; d\nu_2(a)\right)^{1/p}\leq C_p \lVert f \rVert_p.
    \end{equation}
\end{theorem}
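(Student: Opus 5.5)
We follow Hörmander's argument, adapted to the measure space \((\Omega\times\mathbb{C}^n, \phi^2\,dw\,d\nu_2)\). Let \(\mu\) denote the measure \(d\mu=\phi^2\,dw\,d\nu_2\) on \(\Omega\times\mathbb{C}^n\), and consider the sublinear operator
\[
Af(a,w)=\frac{|\tilde f(a,w)|}{\phi(a,w)} .
\]
The claimed inequality is
\[
\Big(\int |Af|^p\,d\mu\Big)^{1/p}=\Big(\int|\tilde f|^p\phi^{2-p}\,dw\,d\nu_2\Big)^{1/p}\le C_p\|f\|_p .
\]
So it suffices to show that \(A\) is of weak type \((2,2)\) and of weak type \((1,1)\) from \(L^p(\mathbb{H}^n)\) into \(L^{p,\infty}(\mu)\). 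Theorem \ref{Marcinkiewicz} with \(p_1=q_1=1\), \(p_2=q_2=2\) then gives \(L^p\to L^p(\mu)\) boundedness for \(1<p<2\). The endpoint \(p=2\) is exactly the Plancherel formula \eqref{plancherel}, since there \(\phi^{0}=1\).

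\textbf{Weak \((2,2)\).} By Chebyshev and \eqref{plancherel},
\[
\mu\{Af>y\}\le y^{-2}\int |Af|^2\,d\mu=y^{-2}\int|\tilde f|^2\,dw\,d\nu_2=y^{-2}\|f\|_2^2 .
\]
This endpoint is immediate.

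\textbf{Weak \((1,1)\).} This is the main step. Normalize \(\|f\|_1=1\), so \(|\tilde f|\le c_{n,1}\) by \eqref{L1 to L inf bdd}. If \(|\tilde f|/\phi>y\), then \(\phi<c_{n,1}/y\). Hence
\[
\mu\{Af>y\}\le\int_{\{\phi<c/y\}}\phi^2\,dw\,d\nu_2 .
\]
We estimate this by the layer-cake formula:
\[
\int_{\{\phi<c/y\}}\phi^2=\int_0^{c/y}2s\,\big|\{s<\phi<c/y\}\big|\,ds\le\int_0^{c/y}2s\,\big|\{\phi>s\}\big|\,ds .
\]
Since \(\phi\in L^{1,\infty}\), we have \(|\{\phi>s\}|\le\|\phi\|_{L^{1,\infty}}/s\). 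The integral is therefore at most \(2\|\phi\|_{L^{1,\infty}}\,c/y\), which is the weak \((1,1)\) bound. Homogeneity, replacing \(f\) by \(f/\|f\|_1\), gives \(\mu\{Af>y\}\lesssim\|f\|_1/y\) in general.

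\textbf{Technical points.} Two things need checking. First, \(A\) must be sublinear and defined on \(L^1+L^2\); this holds because \(f\mapsto\tilde f\) is linear and extends to \(L^1+L^2\) by \eqref{L1 to L inf bdd} and \eqref{plancherel}. Second, the weak type bounds must be stated with respect to the changed measure \(\mu\), so that Theorem \ref{Marcinkiewicz} applies on \((Y,\mu)\). The hypotheses \(p_i\le q_i\) and \(q_1\ne q_2\) are satisfied.

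One could object that Theorem \ref{Marcinkiewicz} asks for weak type on \(L^{p_i,1}\). Strong-to-weak bounds on \(L^{p_i}\) imply this, since \(L^{p_i,1}\subset L^{p_i}\). The conclusion then follows for Schwartz \(f\), and for all \(f\in L^p\) by density.
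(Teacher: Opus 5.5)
Your proposal is correct and follows the paper's proof essentially step by step. The shared steps are: the operator $f\mapsto \tilde f/\phi$ into $(\Omega\times\mathbb{C}^n,\phi^2\,dw\,d\nu_2)$, weak type $(2,2)$ from Plancherel, weak type $(1,1)$ from the $L^1\to L^\infty$ bound combined with the layer-cake estimate $\int_{\{\phi<\sigma\}}\phi^2\,dw\,d\nu_2\le 2\|\phi\|_{L^{1,\infty}}\sigma$, and then Marcinkiewicz interpolation. One wording slip: the endpoint bounds should read $L^{p_i}(\mathbb{H}^n)\to L^{p_i,\infty}(\mu)$ for $p_i=1,2$, not ``from $L^p$ into $L^{p,\infty}(\mu)$''.
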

\begin{proof}
    We prove this theorem along the same lines as in \(\mathbb{R}^n\). Note that for \(p=2\), the inequality directly follows from the Plancherel theorem. Now, fix \(1<p <2\) and consider the operator \( T\) on \(L^{1,1}(\mathbb{H}^n)+ L^{2,1}(\mathbb{H}^n) \), defined as 
    \begin{equation}
        T(f)=\frac{\tilde{f}}{\phi},
    \end{equation} which takes values in the space of measurable functions on the measure space \((\Omega \times \mathbb{C}^n; \phi^2(a,w) dw\; d\nu_2)\).
    
    \noindent Observe that for \(f \in L^2(\mathbb{H}^n)\), 
    \begin{align*}
        \int\limits_{\Omega \times \mathbb{C}^n}(T(f)(a,w))^2\phi^2(a,w)dw\;d\nu_2(a)&=\int\limits_{\Omega \times \mathbb{C}^n}\left(\tilde{f}(a,w)\right)^2 dw\;d\nu_2(a)\\
        &=\lVert f \rVert_2^2.
    \end{align*}
Hence, the operator T is of strong type \((2,2)\) and consequently also of weak type \((2,2)\).

   Next, we show that \(T\) is of weak type \((1,1)\), which is equivalent to proving that
    \begin{equation}\label{equiv weak 1,1}
    \left| \{(a,w): |Tf(a,w)|>s \} \right|_{\phi^2} \leq C'\frac{\lVert f \rVert_1}{s},\; s>0.
    \end{equation}
    
Here, \(|\cdot|_{\phi^2}\) denotes the weighted measure with respect to \(\phi^2(a,w)\). 
Note that it is assumed that \(\phi \in L^{1,\infty}(\Omega \times \mathbb{C}^n)\).
Therefore there exists a constant \(C\) such that 
\begin{equation}\label{phi-in-L1inf}
\left |\{(a,w): \phi(a,w)>s \}\right| \leq \frac{C}{s}.
\end{equation}
Using the estimate \eqref{L1 to L inf bdd}, we have
$$
\left|Tf(a,w)\right|=\left| \frac{\tilde{f}(a,w)}{\phi(a,w)}\right|\leq c_{n,1}\frac{\lVert f \rVert_1}{\phi(a,w)}.
$$
Hence, if 
$$
\left | Tf(a,w)\right|>s,
$$
then 
$$
\phi(a,w) \leq c_{n,1}\frac{\lVert f \rVert_1}{s}.
$$
Thus, to obtain the estimate \eqref{equiv weak 1,1}, we show that for \(\sigma >0\),
\begin{equation}\label{claim for weak type}
  \left | \{(a,w):\phi(a,w) \leq \sigma\} \right|_{\phi^2}  \leq 2K\sigma,
\end{equation}
where the constant \(K\) is independent of \(\sigma\).
By Cake Layer representation theorem in \cite{Lieb-Loss} and \eqref{phi-in-L1inf}, we obtain
\begin{align*}
  \left | \{(a,w):\phi(a,w) \leq \sigma\} \right|_{\phi^2}  &=\int\limits_{\phi(a,w) \leq \sigma}\phi^2(a,w)dw\;d\nu_2(a)\\
  &=\int\limits_{0}^\infty2t \left| \{(a,w): t< \phi \leq \sigma\}\right|dt\\
  &= \int\limits_0^\sigma 2t( m(t)-m(\sigma))dt,\; \text{here } m(t)=\left|\{(a,w): \phi(a,w)>t\} \right|\\
  &=\int\limits_0^\sigma 2t m(t)dt-\sigma^2m(\sigma)\\
  & \leq \int\limits_0^\sigma 2t m(t)dt \leq 2C\sigma .
  \end{align*}
This establishes the claim in \eqref{claim for weak type}. 
Consequently, the operator \(T\) is of weak type \((1,1)\). 
Applying the Marcinkiewicz interpolation theorem (Theorem \ref{Marcinkiewicz}) then yields the desired result.
\end{proof}
\section{ Hardy's inequality and Pitt's inequality}\label{sec 7}
In this section, we see the relation between Pitt's inequality and Hardy's inequality for some specific choice of weights. For \(f \in \mathcal{S}(\mathbb{R}^n)\), as discussed in the introduction the classical Pitt's inequality in \(\mathbb{R}^n\) states that
\begin{equation*}
    \left( \int\limits_{\mathbb{R}^n} |\xi|^{-\sigma q} |\widehat{f}(\xi)|^q \, d\xi \right)^{1/q} 
    \leq C_{\sigma} \left( \int\limits_{\mathbb{R}^n} |x|^{\alpha p} |f(x)|^p \, dx \right)^{1/p},    
\end{equation*}  
which holds if and only if  
\[
   0 \leq \sigma < \frac{n}{q}, 
   \qquad 0 \leq \alpha < \frac{n}{p'}, 
   \qquad \text{and} \qquad 
   \alpha - \sigma = n \left( 1 - \frac{1}{p} - \frac{1}{q} \right).
\]  
When \(p=q=2\), the inequality reduces to 
\begin{equation}\label{pitts p=2}
   \left( \int\limits_{\mathbb{R}^n} |\xi|^{-2\sigma } |\widehat{f}(\xi)|^2 \, d\xi \right)^{1/2} 
    \leq C_{\sigma} \left( \int\limits_{\mathbb{R}^n} |x|^{2\sigma } |f(x)|^2 \, dx \right)^{1/2} ,
\end{equation}
for \(0 \leq\sigma<n/2.\)
\noindent  Replacing \(f\) by its Fourier transform in the above equation, we obtain
\begin{equation}\label{pitts replace fourier}
   \left( \int\limits_{\mathbb{R}^n} |x|^{-2\sigma } |{f}(x)|^2 \, dx\right)^{1/2} 
    \leq C_{\sigma} \left( \int\limits_{\mathbb{R}^n} |\xi|^{2\sigma } |\widehat{f}(\xi)|^2 \, d\xi \right)^{1/2}.
\end{equation}
\noindent We can rewrite the above two inequalities in terms of fractional powers of the Laplacian. The inequality \eqref{pitts p=2} leads to 
\begin{equation}\label{pitts laplacian}
    \langle (-\Delta)^{-\sigma}f,f \rangle \leq C_{\sigma} \int\limits_{\mathbb{R}^n}|x|^{2\sigma}|f(x)|^2\,dx.
\end{equation}
In addition, the inequality \eqref{pitts replace fourier} reduces to 
\begin{equation}\label{hardy ineq}
\int\limits_{\mathbb{R}^n}|x|^{-2\sigma}|f(x)|^2\,dx \leq C_{\sigma} \langle(-\Delta)^{\sigma}f,f\rangle,
\end{equation}
which is the Hardy's inequality for fractional powers of the Laplacian.
By reversing the argument, we can deduce inequality Pitt's inequality via Hardy's inequality. We can summarize this as the following proposition.
\begin{proposition}
    For \(f \in \mathcal{S}(\mathbb{R}^n)\) and \(0 \leq \sigma <n/2\), the Pitt's inequality
    $$
    \langle (-\Delta)^{-\sigma}f,f \rangle \leq C_{\sigma} \int\limits_{\mathbb{R}^n}|x|^{2\sigma}|f(x)|^2\,dx.
    $$
    is equivalent to the Hardy's inequality 
    $$
    \int\limits_{\mathbb{R}^n}|x|^{-2\sigma}|f(x)|^2\,dx \leq C_{\sigma} \langle(-\Delta)^{\sigma}f,f\rangle.
    $$
\end{proposition}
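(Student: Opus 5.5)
The plan is to pass from each inequality to the other by substituting $f\mapsto \widehat f$ (or $f\mapsto \check f$) and using the Plancherel theorem. Both directions are needed, and each rests on writing the fractional Laplacian as a Fourier multiplier:
\[
\langle(-\Delta)^{\pm\sigma}f,f\rangle=(2\pi)^{-n}\int_{\mathbb{R}^n}|\xi|^{\pm2\sigma}|\widehat f(\xi)|^2\,d\xi ,
\]
with whatever normalisation of $\widehat f$ is in force. The constants $(2\pi)^{\pm n}$ will be absorbed into $C_\sigma$. The restriction $0\le\sigma<n/2$ guarantees that $|\xi|^{-2\sigma}$ is locally integrable. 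Hence for $f\in\mathcal S(\mathbb{R}^n)$ the left side of the Pitt form is finite and the identity is legitimate. For the Hardy form, $|x|^{-2\sigma}|f|^2$ is likewise integrable.

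\emph{Pitt $\Rightarrow$ Hardy.} Given $f\in\mathcal S$, I would apply the Pitt inequality, in the form \eqref{pitts p=2}, to $g=\widehat f\in\mathcal S$. Then $\widehat g(\xi)=(2\pi)^n f(-\xi)$. The left side becomes a constant multiple of $\int|\xi|^{-2\sigma}|f(-\xi)|^2\,d\xi=\int|x|^{-2\sigma}|f(x)|^2\,dx$, since the weight is radial. The right side is $\int|x|^{2\sigma}|\widehat f(x)|^2\,dx$, which by the multiplier identity equals a constant times $\langle(-\Delta)^\sigma f,f\rangle$. Squaring gives \eqref{hardy ineq}.

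\emph{Hardy $\Rightarrow$ Pitt.} Conversely, given $f\in\mathcal S$, I would apply \eqref{hardy ineq} to $g=\widehat f$. The left side is $\int|\xi|^{-2\sigma}|\widehat f(\xi)|^2\,d\xi$, which by the multiplier identity is a constant times $\langle(-\Delta)^{-\sigma}f,f\rangle$. For the right side, write $\langle(-\Delta)^\sigma g,g\rangle=c\int|x|^{2\sigma}|\widehat g(x)|^2\,dx$ and use $\widehat g(x)=(2\pi)^nf(-x)$. This produces $\int|x|^{2\sigma}|f(x)|^2\,dx$, which is \eqref{pitts laplacian}.

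\emph{Main obstacle.} The only delicate point is justifying the multiplier formula for the negative power $(-\Delta)^{-\sigma}$, which is not a differential operator. I would take it as the definition, $(-\Delta)^{-\sigma}f=\mathcal F^{-1}(|\xi|^{-2\sigma}\widehat f)$, and check that it makes sense for $f\in\mathcal S$ when $\sigma<n/2$. There $|\xi|^{-2\sigma}\widehat f\in L^1+L^2$, so the pairing with $f$ is well defined via Plancherel/Parseval. Equivalently, it is given by the Riesz potential $c_{n,\sigma}|x|^{2\sigma-n}*f$, which converges because $2\sigma<n$. With this in place, both directions are purely formal, since $\mathcal S$ is invariant under the Fourier transform.
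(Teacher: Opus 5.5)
Your proposal is correct and is essentially the paper's argument. Apply \eqref{pitts p=2} to $\widehat f$, use $\widehat{\widehat f}(x)=(2\pi)^n f(-x)$ together with the multiplier form of $(-\Delta)^{\pm\sigma}$ to obtain \eqref{hardy ineq}, and run the same substitution backwards for the converse; the paper only indicates that direction by ``reversing the argument'', and also sketches a Cauchy--Schwarz route that needs Hardy for $(-\Delta)^{-\sigma}f\notin\mathcal{S}(\mathbb{R}^n)$, which your version avoids by staying inside $\mathcal{S}(\mathbb{R}^n)$.
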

\noindent This shows the connection between Pitt's inequality and the Hardy's inequality.

The inequality \eqref{pitts laplacian} can also be obtained from the inequality \eqref{hardy ineq} by the following argument.
\begin{equation*}
    \langle (-\Delta)^{\sigma}f,f \rangle^2\leq \left(\int\limits_{\mathbb{R}^n}|(-\Delta)^{\sigma}f(x)|^2|x|^{2\sigma}\,dx \right) \left( \int\limits_{\mathbb{R}^n}|x|^{-2\sigma}|f(x)|^2\,dx \right).
\end{equation*}
Now applying Hardy's inequality the above takes the form
\begin{equation*}
    \langle (-\Delta)^{\sigma}f,f \rangle\leq C_{\sigma} \left(\int\limits_{\mathbb{R}^n}|(-\Delta)^{\sigma}f(x)|^2|x|^{2\sigma}\,dx \right).
\end{equation*}
Lastly, replacing \(f\) by \((-\Delta)^{-\sigma}f\), we obtain the desired result.

In the case of the Heisenberg group, the authors in \cite{Roncal-thangavelu-Hardy_inequality} established versions of Hardy's inequality for the Heisenberg group for non-homogeneous and homogeneous weights. First, the inequality was established for the conformally invariant fractional powers of the sublaplacian, denoted by \(\mathcal{L}_s\). The operator \(\mathcal{L}_s\) arises naturally in the context of CR geometry on the Heisenberg group. One can refer to \cite{Branson-inequalities_on_CR_Sphere, Frank_and_Lieb-sharp_constants_on_Hn} for more details.
We recall the definition of \(\mathcal{L}_s\) here. 

For a function \(f\) on \(\mathbb{H}^n\) and \(\lambda \in \mathbb{R}^*\), we denote \(\widehat{f}(\lambda)\) as its usual Fourier transform, which is operator valued, details of which can be found in \cite{thangaveluheisenberg}. The Fourier transform acts on \(\mathcal{L}f\) as
\begin{equation*}
     \widehat{(\mathcal{L}f)}(\lambda)=\widehat{f}(\lambda)H(\lambda).
\end{equation*}
\noindent Using the spectral decomposition of the Hermite operator \(H(\lambda)\), the spectral decomposition of the sublaplacian \(\mathcal{L}\) can be written as 
\begin{equation*}
\mathcal{L}f(z,t) = (2\pi)^{-n-1} \int\limits_{-\infty}^{\infty} \left( \sum_{k=0}^{\infty} (2k+n)|\lambda| f^{\lambda} * \varphi_{k}^{\lambda}(z) \right) e^{-i\lambda t} |\lambda|^n d\lambda.
\end{equation*}
The fractional power of the sublaplacian is defined via the spectral decomposition in a canonical way as 
\begin{equation*}
    \mathcal{L}^s f(z,w) = (2\pi)^{-n-1} \int\limits_{-\infty}^{\infty} \left( \sum_{k=0}^{\infty} \left((2k+n)|\lambda|\right)^s f^{\lambda} * \varphi_{k}^{\lambda}(z) \right) e^{-i\lambda w} |\lambda|^n d\lambda.
\end{equation*}

\noindent Note that \(\widehat{(\mathcal{L}^sf)}(\lambda)=\widehat{f}(\lambda)H^s(\lambda)\).

For \(0 \leq s <(n +1)\) the operator \(\mathcal{L}_s\) is defined as 
\begin{equation}\label{definition of L_s}
    \mathcal{L}_s f(z,w) = (2\pi)^{-n-1} \int\limits_{-\infty}^{\infty} \left( \sum_{k=0}^{\infty} (2|\lambda|)^s \frac{\Gamma\left(\frac{2k+n}{2} + \frac{1+s}{2}\right)}{\Gamma\left(\frac{2k+n}{2} + \frac{1-s}{2}\right)} f^{\lambda} * \varphi_{k}^{\lambda}(z) \right) e^{-i\lambda w} |\lambda|^n d\lambda.
\end{equation}
Taking the inverse Fourier transform in the central variable, we get 
\begin{equation}
\int\limits_{-\infty}^{\infty} \mathcal{L}_s f(z,w) e^{i\lambda w} dw = (2\pi)^{-n} |\lambda|^n \sum_{k=0}^{\infty} (2|\lambda|)^s \frac{\Gamma\left(\frac{2k+n}{2} + \frac{1+s}{2}\right)}{\Gamma\left(\frac{2k+n}{2} + \frac{1-s}{2}\right)} f^{\lambda} * \varphi_{k}^{\lambda}(z).
\end{equation}
The operator \(\mathcal{L}_s\) is known to admit explicit fundamental solution \cite{Cowling-comp_bdd_multipliers}.

 Motivated by the case of \(\mathbb{R}^n\), we deduce an analogue of inequality \eqref{pitts laplacian} for non-zero function \(f\in \mathcal{S}(\mathbb{H}^n)\) and \(p=q=2\). 
 
 We start with
$$\langle \mathcal{L}_{-s}f, f\rangle = \int\limits_{-\infty}^{\infty}(2\pi)^{-1}\langle L_{-s}^\lambda f^\lambda, f^\lambda\rangle \,d\lambda$$
where $L_{-s}^\lambda f^\lambda = (\mathcal{L}_{-s}f)^\lambda$ and is explicitly given by
\begin{equation}\label{expression for Lslambda}
    L_{-s}^\lambda f^\lambda(z) = (2|\lambda|)^{-s} \, (2\pi)^{-n} |\lambda|^n \sum_{k=0}^{\infty} \frac{\Gamma\left(\frac{2k+n+1-s}{2}\right)}{\Gamma\left(\frac{2k+n+1+s}{2}\right)} f^\lambda *_\lambda \varphi_{k,\lambda}^{n-1}(z).
\end{equation}
For \(0<s<1\), we use the following expansion of the Macdonald function \cite{thangavelu-roncal-oscar-Hardy_inequalities}
\begin{equation}
\pi^{-1/2} 2^{s+n-1} \frac{\Gamma(\frac{n-1-s}{2})}{\Gamma(s)} (|\lambda| |z|^2)^{-(n-s)/2} K_{(n-s)/2} \left( \frac{|\lambda|}{2} |z|^2 \right) 
= \frac{2}{\Gamma(n)} \sum_{k=0}^{\infty} \frac{\Gamma\left(\frac{2k+n+1-s}{2}\right)}{\Gamma\left(\frac{2k+n+1+s}{2}\right)}\varphi_{k,\lambda}^{n-1}(z).
\end{equation}
Using this, we see that
$$L_{-s}^\lambda f^\lambda(z) = (2|\lambda|)^{-s} \, (2\pi)^{-n} |\lambda|^n \, f^\lambda *_\lambda k_s(z)$$
with the kernel $k_s$ given by
$$k_s(z) = \pi^{-1/2} 2^{s+n-1} \frac{\Gamma(n) \Gamma(\frac{(n-1-s)}{2})}{\Gamma(s)} (|\lambda| |z|^2)^{-(n-s)/2} K_{(n-s)/2} \left( \frac{|\lambda|}{2} |z|^2 \right)$$
From \cite{handbook}, we have that for \(y>0\) the Macdonald function $K_\nu$ has the integral representation
$$ K_\nu(y)=\frac{\Gamma(\nu + 1/2)}{\Gamma(1/2)}(2y)^\nu \int_{0}^{\infty} \frac{\cos t}{(t^2 + y^2)^{\nu+1/2}} \, dt. $$
Using the bound for the integral on the right, we obtain the estimate 
$$ K_\nu(y) \leq \frac{\Gamma(\nu + 1/2)}{\Gamma(1/2)}(2y)^\nu y^{-2\nu} \int_{0}^{\infty} (t^2 + 1)^{-\nu-1/2} \, dt. $$
Now,
$$ 2 \int_{0}^{\infty} (t^2 + 1)^{-\nu-1/2} \, dt = \int_{0}^{\infty} \frac{t^{-1/2}}{(1 + t)^{\nu+1/2}} \, dt = B(1/2, \nu)=\frac{\Gamma(1/2)\Gamma(\nu)}{\Gamma(\nu+1/2)} .$$
This gives the estimate $K_\nu(y) \leq \Gamma(\nu) 2^{\nu-1} y^{-\nu}$. 

\noindent Subsequently, from this we deduce  $k_s(z) \leq d_{n,s} (|\lambda|)^{-n+s} |z|^{-2n+2s}$ where
$$ d_{n,s} = 2^{2n-2} \frac{\Gamma(\frac{n-1-s}{2})\Gamma(n) \Gamma(n-s)}{\Gamma(s)\Gamma(1/2)}. $$
Therefore, we have 
\begin{align*}
    \left| \langle L_{-s}^\lambda f^\lambda, f^\lambda \rangle \right|&=|\lambda|^{n-s}\int_{\mathbb{C}^n}|f^\lambda| \ast k_s(z) |f^\lambda|(z)dz\\
    & \leq d_{n,s} \int_{\mathbb{C}^n}(-\Delta^{-s})|f^\lambda|(z) |f^\lambda|(z)dz\\
    &=d_{n,s}\langle (\Delta^{-s})|f^\lambda|,|f^\lambda| \rangle\\
    &\leq d_{n,s}\pi^{2s}\left(\frac{\Gamma(\frac{n-2s}{4})}{\Gamma(\frac{n+2s}{4})}\right)^2\int_{\mathbb{C}^n}|z|^{2s}|f^\lambda(z)|^2dz.
\end{align*}
The last inequality follows from the Pitt's inequality on \(\mathbb{R}^{2n}\). 

\noindent Thus we have 
\begin{equation}\label{pitts with weight |z|-2s}
    \langle \mathcal{L}_{-s}f,f\rangle \leq d_{n,s}\pi^{2s}\left(\frac{\Gamma(\frac{n-2s}{4})}{\Gamma(\frac{n+2s}{4})}\right)^2 \int_{\mathbb{H}^n}|z|^{2s}|f(z,t)|^2\,dz\,dt. 
\end{equation}
The above inequality is an analogue of the inequality \eqref{pitts laplacian}.
Also, it leads to the following Hardy's inequality
\begin{equation}\label{equiv hardy for weight |z|2s}
    \langle \mathcal{L}_sf,f\rangle \geq d_{n,s}^{-1} \pi^{-2s} \left(\frac{\Gamma(\frac{n+2s}{4})}{\Gamma(\frac{n-2s}{4})}\right)^2 \int_{\mathbb{H}^n}|z|^{-2s}|f(z,t)|^2\,dz\,dt,
\end{equation}
which is analogue of \eqref{hardy ineq}.

We can also reformulate \eqref{pitts with weight |z|-2s} in terms of the Strichartz Fourier transform on \(\mathbb{H}^n\).
Using the expression for \(L^{\lambda}_{-s}f^\lambda\) as in \eqref{expression for Lslambda}, we obtain the identity
\begin{equation}\label{expression L_-sf,f}
    \langle \mathcal{L}_{-s}f,f\rangle=(2\pi)^{-1}\int\limits_{-\infty}^{\infty}(2|\lambda|)^{-s}(2\pi)^{-2n}|\lambda|^{2n} \sum\limits_{k=0}^\infty \frac{\Gamma\left(\frac{2k+n+1-s}{2}\right)}{\Gamma\left(\frac{2k+n+1+s}{2}\right)} \lVert f^{\lambda}\ast_{\lambda}\varphi^{n-1}_{k,\lambda}\rVert_2^2\,d\lambda.
\end{equation}
Moreover we have 
$$
\lVert f^\lambda \ast_{\lambda} \varphi^{n-1}_{k,\lambda} \rVert_2^2=(2\pi)^{n}|\lambda|^{-n}\lVert \widehat{f}(\lambda)P_k(\lambda)\rVert_{HS}^2,
$$
where \(P_k(\lambda)\) denotes the orthogonal projection of \(L^2(\mathbb{R}^n)\) on the \(k\)-th eigenspace of the Hermite operator \(H(\lambda)\).

Using this in \eqref{expression L_-sf,f}, we obtain
\begin{align}
\langle \mathcal{L}_{-s}f,f\rangle&=(2\pi)^{-1}\int\limits_{-\infty}^{\infty}(2|\lambda|)^{-s}(2\pi)^{-n}|\lambda|^{n} \sum\limits_{k=0}^\infty \frac{\Gamma\left(\frac{2k+n+1-s}{2}\right)}{\Gamma\left(\frac{2k+n+1+s}{2}\right)} \lVert \widehat{f}(\lambda)P_k(\lambda)\rVert_{HS}^2\,d\lambda\nonumber\\
&=(2\pi)^{-1}\int\limits_{-\infty}^{\infty}(2|\lambda|)^{-s}(2\pi)^{-2n}|\lambda|^{2n} \sum\limits_{k=0}^\infty \frac{\Gamma\left(\frac{2k+n+1-s}{2}\right)}{\Gamma\left(\frac{2k+n+1+s}{2}\right)}\left(c_{n,k}^{-2}\int\limits_{\mathbb{C}^n}\lvert\tilde{f}(\lambda,(2k+n)|\lambda|),w)\rvert^2 \,dw\right)\,d\lambda\nonumber \\
&=\int\limits_{\Omega \times \mathbb{C}^n}(2|\lambda|)^{-s}\left(\frac{\Gamma\left(\frac{2k+n+1-s}{2}\right)}{\Gamma\left(\frac{2k+n+1+s}{2}\right)}\right)\left| \tilde{f}(a,w)\right|^2 d\nu_2(a)\;dw .
\end{align}
The last equation uses the following relation from \cite{Strichartz_Fourier_thangavelu}
\begin{equation}\label{strichartz relation formula}
\int\limits_{\mathbb{C}^n}\left|\tilde{f}(a,w) \right|^2dw=c_{n,k}^2\int\limits_{\mathbb{C}^n}\left|\widehat{f}(a,w) \right|^2dw=c_{n,k}^2(2\pi)^n|\la|^{-n}\left \lVert \widehat{f}(\lambda)P_k(\lambda)\right \rVert^2_{HS}.
\end{equation}

We summarize this result in the following theorem.
\begin{theorem}\label{theorem pitts weight |z|2s}
    Let \(f \in \mathcal{S}(\mathbb{H}^n)\) and \(0<s<1\). Then the following Pitt's inequality holds 
    \begin{align*}
   \int\limits_{\Omega \times \mathbb{C}^n}(2|\lambda|)^{-s}&\left(\frac{\Gamma\left(\frac{2k+n+1-s}{2}\right)}{\Gamma\left(\frac{2k+n+1+s}{2}\right)}\right)\left| \tilde{f}(a,w)\right|^2 d\nu_2(a)\;dw \\
&\leq  d_{n,s}\pi^{2s}\left(\frac{\Gamma(\frac{n-2s}{4})}{\Gamma(\frac{n+2s}{4})}\right)^2 \int_{\mathbb{H}^n}|z|^{2s}|f(z,t)|^2\,dz\,dt. 
        \end{align*}
\end{theorem}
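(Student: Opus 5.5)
The plan is to assemble the two computations carried out just before the statement. The first gives an upper bound for the quadratic form $\langle \mathcal{L}_{-s}f,f\rangle$ in terms of $\int|z|^{2s}|f|^2$. The second identifies $\langle \mathcal{L}_{-s}f,f\rangle$ exactly with the weighted $L^2$ norm of $\tilde f$ on $\Omega\times\mathbb{C}^n$.

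\textbf{Step 1: reduce to a $\lambda$-slice.} Write
$$\langle \mathcal{L}_{-s}f,f\rangle=(2\pi)^{-1}\int_{-\infty}^{\infty}\langle L^\lambda_{-s}f^\lambda,f^\lambda\rangle\,d\lambda$$
by Plancherel in the central variable. Use the Macdonald-function expansion to express $L^\lambda_{-s}f^\lambda$ as $(2|\lambda|)^{-s}(2\pi)^{-n}|\lambda|^n f^\lambda*_\lambda k_s$.

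\textbf{Step 2: pointwise kernel bound.} From the integral representation of $K_\nu$, derive $K_\nu(y)\le \Gamma(\nu)2^{\nu-1}y^{-\nu}$. This gives the Riesz-type bound $k_s(z)\le d_{n,s}|\lambda|^{-n+s}|z|^{-2n+2s}$. The twisted convolution carries only a unimodular phase, so
$$|\langle L^\lambda_{-s}f^\lambda,f^\lambda\rangle|\le \int |f^\lambda|*(\text{Riesz kernel})\,|f^\lambda|.$$
Up to the constant $d_{n,s}$, the right-hand side is $\langle(-\Delta)^{-s}|f^\lambda|,|f^\lambda|\rangle$ on $\mathbb{R}^{2n}$. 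Apply the sharp Euclidean Pitt inequality on $\mathbb{R}^{2n}$ with $p=q=2$, which is valid since $0<s<1<n$. Then integrate in $\lambda$ and use Plancherel, $\int\int|z|^{2s}|f^\lambda|^2\,dz\,d\lambda=2\pi\int_{\mathbb{H}^n}|z|^{2s}|f|^2$, to obtain \eqref{pitts with weight |z|-2s}.

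\textbf{Step 3: rewrite the left side through $\tilde f$.} Expand $\langle L^\lambda_{-s}f^\lambda,f^\lambda\rangle$ using orthogonality of the projections $f^\lambda*_\lambda\varphi^{n-1}_{k,\lambda}$. Together with $\|f^\lambda*_\lambda\varphi_{k,\lambda}^{n-1}\|_2^2=(2\pi)^n|\lambda|^{-n}\|\widehat f(\lambda)P_k(\lambda)\|_{HS}^2$ and relation \eqref{strichartz relation formula}, this converts $\|\widehat f(\lambda)P_k(\lambda)\|_{HS}^2$ into $c_{n,k}^{-2}\int|\tilde f(a,w)|^2\,dw$. The factor $c_{n,k}^{-2}=((k+n-1)!/(k!(n-1)!))^2$ together with $(2\pi)^{-2n-1}|\lambda|^{2n}$ is precisely the density of $d\nu_2$. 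Combining Steps 2 and 3 gives the theorem.

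\textbf{Main obstacle.} The delicate point is bookkeeping the constants and powers of $2\pi$ and $|\lambda|$ in Step 3, so that the sum over $k$ matches $d\nu_2$ exactly. A second check is that $\langle\mathcal{L}_{-s}f,f\rangle$ is nonnegative, so that the absolute values in Step 2 cost nothing. This holds because every spectral multiplier $\Gamma(\frac{2k+n+1-s}{2})/\Gamma(\frac{2k+n+1+s}{2})$ is positive.
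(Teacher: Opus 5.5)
Your outline is step for step the paper's own argument, and its skeleton is sound. Step 3 does identify the left-hand side with $\langle\mathcal{L}_{-s}f,f\rangle$, the kernel of $L^\lambda_{-s}$ is positive and dominated by a Riesz kernel, and Pitt on $\mathbb{R}^{2n}$ then closes the argument.

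The gap is the constant bookkeeping in Step 2. You flag it as the main obstacle but do not carry it out, and it cannot produce $d_{n,s}\pi^{2s}\bigl(\Gamma(\frac{n-2s}{4})/\Gamma(\frac{n+2s}{4})\bigr)^2$. There are four problems:
\begin{enumerate}
\item The sentence ``up to the constant $d_{n,s}$, the right-hand side is $\langle(-\Delta)^{-s}|f^\lambda|,|f^\lambda|\rangle$'' drops the Riesz normalisation. On $\mathbb{R}^{2n}$ one has $\iint|g(z)|\,|z-w|^{-2n+2s}|g(w)|\,dz\,dw=\frac{4^s\pi^n\Gamma(s)}{\Gamma(n-s)}\langle(-\Delta)^{-s}|g|,|g|\rangle$.
\item The factor $2^{-s}(2\pi)^{-n}$ left over from $(2|\lambda|)^{-s}(2\pi)^{-n}|\lambda|^n\cdot|\lambda|^{-n+s}$ is never accounted for.
\item The sharp Pitt inequality on $\mathbb{R}^{2n}$ for $(-\Delta)^{-s}$ has constant $4^{-s}\Gamma(\frac{n-s}{2})^2/\Gamma(\frac{n+s}{2})^2$. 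The quotient $\Gamma(\frac{n-2s}{4})/\Gamma(\frac{n+2s}{4})$ belongs to $\mathbb{R}^n$, not $\mathbb{R}^{2n}$.
\item The quoted Macdonald expansion carries $\Gamma(\frac{n-1-s}{2})$, which is negative for $n=1$. Yet $\sum_k\frac{\Gamma(\frac{2k+n+1-s}{2})}{\Gamma(\frac{2k+n+1+s}{2})}\varphi^{n-1}_{k,\lambda}$ is a positive kernel.
\end{enumerate}

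These errors are not cosmetic, because with the stated constant the inequality is false.
\begin{enumerate}
\item For $n\ge 2$, the factor $1/\Gamma(s)$ in $d_{n,s}$ makes the right-hand constant tend to $0$ as $s\to0^+$. For a fixed nonzero $f$, $\int|z|^{2s}|f|^2$ stays bounded. By Fatou and \eqref{plancherel}, the left-hand side has $\liminf$ at least $\|f\|_2^2$.
\item For $n=1$, $d_{1,s}<0$, so the right-hand side is negative.
\end{enumerate}

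What your route actually proves, once constants are tracked, is the following. Write each multiplier as a Beta integral and sum the Laguerre generating function:
\[
S_\lambda(z):=\sum_k\frac{\Gamma(\frac{2k+n+1-s}{2})}{\Gamma(\frac{2k+n+1+s}{2})}\varphi^{n-1}_{k,\lambda}(z)=\frac{2^{s-n}}{\Gamma(s)}\int_0^\infty(\sinh r)^{s-n-1}e^{-\frac14|\lambda||z|^2\coth r}\,dr .
\]
The bounds $\sinh r\ge r$ and $\coth r\ge 1/r$ then give $0<(2|\lambda|)^{-s}(2\pi)^{-n}|\lambda|^nS_\lambda(z)\le\frac{\Gamma(n-s)}{4^s\pi^n\Gamma(s)}|z|^{-2n+2s}$. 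This is exactly the kernel of $(-\Delta_{\mathbb{R}^{2n}})^{-s}$. Hence $|\langle L^\lambda_{-s}f^\lambda,f^\lambda\rangle|\le\langle(-\Delta)^{-s}|f^\lambda|,|f^\lambda|\rangle$.

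Your Steps 2--3 therefore give the theorem with constant $4^{-s}\bigl(\Gamma(\frac{n-s}{2})/\Gamma(\frac{n+s}{2})\bigr)^2$ in place of the stated one. The $\Gamma(s)$ lost in the Riesz normalisation is precisely what cancels the $1/\Gamma(s)$ in $d_{n,s}$.
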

\noindent We can derive another version using the Pitt's inequality on the real line. 
Recall from \eqref{expression L_-sf,f}, we have the identity
\begin{equation*}
    \langle \mathcal{L}_{-s}f,f\rangle=(2\pi)^{-1}\int\limits_{-\infty}^{\infty}(2|\lambda|)^{-s}(2\pi)^{-2n}|\lambda|^{2n} \sum\limits_{k=0}^\infty \frac{\Gamma\left(\frac{2k+n+1-s}{2}\right)}{\Gamma\left(\frac{2k+n+1+s}{2}\right)} \lVert f^{\lambda}\ast_{\lambda}\varphi^{n-1}_{k,\lambda}\rVert_2^2\,d\lambda.
\end{equation*}
Consequently using the special Hermite expansion, we obtain 
\begin{equation*}
     \langle \mathcal{L}_{-s}f,f\rangle \leq (2\pi)^{-1}\int\limits_{\mathbb{C}^n}\int\limits_{-\infty}^{\infty}(2|\lambda|^{-s})|f^{\lambda}(z)|^2\,d\lambda\,dz.
\end{equation*}
Applying the Pitt's inequality for the Fourier transform on the real line, we obtain
\begin{equation}\label{pitts for weight |(z,t)|2s}
    \langle \mathcal{L}_{-s}f,f\rangle \leq 2^{-s} \pi^{s}\left( \frac{\Gamma(\frac{1-s}{4})}{\Gamma(\frac{1+s}{4})}\right)^2\int\limits_{\mathbb{C}^n}\int\limits_{-\infty}^{\infty}|t|^{s}|f(z,t)|^2\,dz\,dt \leq 2^{-s} \pi^{s}\left( \frac{\Gamma(\frac{1-s}{4})}{\Gamma(\frac{1+s}{4})}\right)^2 \int\limits_{\mathbb{H}^n} |(z,t)|^{2s} |f(z,t)|^2\,dz\,dt.
\end{equation}
The above inequality is analogue of Pitt's inequality \eqref{pitts laplacian} for the weight \(|(z,t)|^{-2s}\). Moreover it leads to the Hardy's inequality 
\begin{equation}\label{equiv hardy for weight |(z,t)|2s}
    \langle \mathcal{L}_sf,f \rangle \geq 2^s \pi^{-s}\left( \frac{\Gamma(\frac{1+s}{4})}{\Gamma(\frac{1-s}{4})}\right)^2\int\limits_{\mathbb{H}^n} |(z,t)|^{-2s} |f(z,t)|^2\,dzdt.
\end{equation}
Following the previous method we obtain the following theorem for the weight \(|(z,t)|^{-2s}\).
 \begin{theorem}\label{theorem pitts weight |(z,t)|2s}
     Let \(f\in \mathcal{S}(\mathbb{H}^n)\) and \(0<s< n+1\). Then the following Pitt's inequality holds
      \begin{align*}
    \int\limits_{\Omega \times \mathbb{C}^n}(2|\lambda|)^{-s}&\left(\frac{\Gamma\left(\frac{2k+n+1-s}{2}\right)}{\Gamma\left(\frac{2k+n+1+s}{2}\right)}\right)\left| \tilde{f}(a,w)\right|^2 d\nu_2(a)\;dw \\
&\leq  2^{-s} \pi^{s}\left( \frac{\Gamma(\frac{1-s}{4})}{\Gamma(\frac{1+s}{4})}\right)^2 \int\limits_{\mathbb{H}^n} |(z,t)|^{2s} |f(z,t)|^2\,dz\,dt. 
        \end{align*}   
\end{theorem}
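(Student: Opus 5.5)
The plan is to imitate the derivation of Theorem \ref{theorem pitts weight |z|2s}, but with the Pitt inequality on $\mathbb{R}^{2n}$ in the $z$-variable replaced by the Pitt inequality on $\mathbb{R}$ in the central variable. The chain of inequalities leading to \eqref{pitts for weight |(z,t)|2s} then becomes a statement about $\tilde f$ once we rewrite $\langle \mathcal{L}_{-s}f,f\rangle$ with the Strichartz transform.

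\textbf{Step 1: rewrite the left side.} By \eqref{expression L_-sf,f} together with \eqref{strichartz relation formula}, exactly as in the computation preceding Theorem \ref{theorem pitts weight |z|2s}, the left-hand side of the claimed inequality equals $\langle \mathcal{L}_{-s}f,f\rangle$. This identity is purely spectral, so it does not depend on the size of $s$.

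\textbf{Step 2: control the Gamma ratio.} We need to bound
$$
\frac{\Gamma\left(\frac{2k+n+1-s}{2}\right)}{\Gamma\left(\frac{2k+n+1+s}{2}\right)}\le C\,\bigl((2k+n)\bigr)^{-s}\quad\text{uniformly in } k,
$$
or simply by a constant, since $(2k+n+1\pm s)/2\ge (n+1-s)/2>0$. A version of Gautschi's/Wendel's inequality gives the ratio $\le C_{n,s}$; indeed, for $k$ large it decays like $k^{-s}$. With this, and with the orthogonality $\sum_k \lVert f^\lambda\ast_\lambda\varphi^{n-1}_{k,\lambda}\rVert_2^2 = (2\pi)^{2n}|\lambda|^{-2n}\lVert f^\lambda\rVert_2^2$ (the special Hermite expansion), we obtain
$$
\langle \mathcal{L}_{-s}f,f\rangle\le C(2\pi)^{-1}\int_{\mathbb{C}^n}\int_{-\infty}^{\infty}(2|\lambda|)^{-s}|f^\lambda(z)|^2\,d\lambda\,dz .
$$
This reproduces the second displayed step before \eqref{pitts for weight |(z,t)|2s}.

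\textbf{Step 3: Pitt in the central variable.} For each fixed $z$, $\lambda\mapsto f^\lambda(z)$ is the one-dimensional Fourier transform of $t\mapsto f(z,t)$. We therefore apply the sharp Pitt (Stein–Weiss) inequality on $\mathbb{R}$ with the pair of weights $|\lambda|^{-s}$, $|t|^{s}$. This yields the constant $\pi^{s}\bigl(\Gamma(\frac{1-s}{4})/\Gamma(\frac{1+s}{4})\bigr)^2$, up to the normalisation $2^{-s}$. Integrating in $z$ and using $|t|^{s}\le |(z,t)|^{2s}$, with the Korányi norm $|(z,t)|^4=|z|^4+16t^2$ (so that $|t|\le |(z,t)|^2/4$, which only improves the constant), gives the right-hand side.

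\textbf{Main obstacle: the range of $s$.} The Pitt inequality on $\mathbb{R}$ with weight $|\lambda|^{-s}$ requires $0<s<1$: the weight $|\lambda|^{-s}$ must be locally integrable in dimension one, and the constant $\Gamma(\frac{1-s}{4})$ blows up as $s\to 1$. So Step 3 works only for $0<s<1$, whereas the statement claims $0<s<n+1$.

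For $1\le s<n+1$ we must avoid using the full power $|\lambda|^{-s}$ in $t$ alone. I would try splitting the decay. The factor $\Gamma$-ratio times $|\lambda|^{-s}$ is comparable to $((2k+n)|\lambda|)^{-s}$, i.e. to the symbol of $\mathcal{L}^{-s}$. We would then use the homogeneous Hardy inequality for $\mathcal{L}_s$ with weight $|(z,t)|^{-2s}$, valid for $0<s<n+1$ (from \cite{Roncal-thangavelu-Hardy_inequality}). By duality, as in the Euclidean argument via Cauchy–Schwarz and replacing $f$ by $\mathcal{L}_{-s}f$, this gives $\langle\mathcal{L}_{-s}f,f\rangle\le C\int|(z,t)|^{2s}|f|^2$ with some constant. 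Obtaining exactly the stated constant in that range is the step I expect to fail or need adjustment: likely the constant must be replaced by the sharp Hardy constant there.
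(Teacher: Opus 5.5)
Your Steps 1--3 are exactly the paper's proof. The paper rewrites the left side as $\langle\mathcal{L}_{-s}f,f\rangle$, drops the Gamma ratio via the special Hermite expansion to reach $(2\pi)^{-1}\int_{\mathbb{C}^n}\int_{\mathbb{R}}(2|\lambda|)^{-s}|f^\lambda(z)|^2\,d\lambda\,dz$, applies Pitt's inequality on $\mathbb{R}$ in the central variable, and finishes with $|t|^s\le|(z,t)|^{2s}$. It offers nothing more for large $s$: it simply uses the one-dimensional Pitt inequality on the whole stated range $0<s<n+1$.

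So your diagnosis is right. It exposes a defect of the statement and of the paper's proof, not of your execution. This argument proves the theorem only for $0<s<1$. For $1<s<n+1$, away from the poles $s=5,9,\dots$ of $\Gamma(\tfrac{1-s}{4})$, the stated constant is finite, so the theorem makes a claim that neither your argument nor the paper's supports.

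The obstruction is not merely about constants. For $s>1$ the intermediate bound $\langle\mathcal{L}_{-s}f,f\rangle\le C\int|t|^s|f|^2$ fails for every $C$. Take $f(z,t)=R^{-n}a(z/R)b(t)$ with $a$ radial and $\int b\neq 0$. The symbol of $L^\lambda_{-s}$ is comparable to $((2k+n)|\lambda|)^{-s}$, so Jensen's inequality for the spectral measure of $L_\lambda$ gives $\langle L^\lambda_{-s}a_R,a_R\rangle\gtrsim\langle L_\lambda a_R,a_R\rangle^{-s}\gtrsim R^{2s}$ for $|\lambda|\le R^{-2}$. Hence the left side is $\gtrsim R^{2s-2}$, while $\int|t|^s|f|^2$ stays bounded. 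Any argument for $s>1$ must therefore use the $z$-part of the weight.

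Your fallback does use the $z$-part, but it still does not prove the statement, for two reasons. First, it yields an unspecified constant rather than $2^{-s}\pi^{s}\bigl(\Gamma(\tfrac{1-s}{4})/\Gamma(\tfrac{1+s}{4})\bigr)^2$. Second, it rests on a homogeneous-weight Hardy inequality for $\mathcal{L}_s$ on all of $0<s<n+1$. The paper only quotes the homogeneous version (for $\mathcal{L}^s$) when $0<s<1$. You would have to import the full-range version, e.g. from a Stein--Weiss type inequality on homogeneous groups, valid for $0<s<Q/2=n+1$.

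What can actually be defended is the stated inequality for $0<s<1$, and a version with a non-explicit constant for $1\le s<n+1$.

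A smaller point, for the range where the argument works: to land on the explicit constant you must control the $C$ in Step 2; the paper silently takes $C=1$.
\begin{itemize}
\item The ratio $\Gamma(\tfrac{2k+n+1-s}{2})/\Gamma(\tfrac{2k+n+1+s}{2})$ is $\le 1$ for every $k$ when $n\ge 2$, and for $k\ge 1$ when $n=1$.
\item For $n=1$, $k=0$ it equals $\Gamma(1-\tfrac s2)/\Gamma(1+\tfrac s2)>1$.
\item By log-convexity this ratio is at most $2^s$, which is absorbed by the factor $4^{-s}$ you gain from $|t|\le|(z,t)|^2/4$.
\end{itemize}
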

\begin{remark}
    The following remarks regarding the preceding two theorems are in order.
    \begin{enumerate}
        \item An \(L^p\)- version of \eqref{pitts with weight |z|-2s} with sharp constants has been obtained in \cite{Roncal-Chunxia-Sharp_Lp_Hardy_Stein_Weiss}.
        \item The approach used here to derive Pitt’s inequalities in Theorems \ref{theorem pitts weight |z|2s} and \ref{theorem pitts weight |(z,t)|2s} is based on H\"older’s inequality combined with suitable constants. While this method yields valid bounds, it does not provide sharpness of the constants.
    \end{enumerate}
\end{remark}
There is yet another way to derive Hardy's inequality for \(\mathcal{L}_s\) with a different constant other than the ones obtained from the preceding two methods.
We follow the methods and notations of \cite{Roncal-Thangavelu-Trace_hardy_IMRN}.
For \(\gamma>0\) and \(\rho>0\), define 
\begin{equation*}
    \psi_{s,\gamma}(z,t)=(\gamma+|z|^4+\gamma t^2)^{-\frac{n+1+s}{4}} \text{ \, and \,} \varphi_{_s,\rho}(z,t)=\left((\rho^2+|z|^2)^2+16t^2 \right)^{-\frac{n+1+s}{2}}.
\end{equation*}
Following \cite{Roncal-Thangavelu-Trace_hardy_IMRN}, if we choose  
\begin{equation*}
    c(n,s)=4 \pi^{-n-1/2}\frac{\Gamma(n+s)\Gamma(\frac{n+1+s}{2})}{\Gamma(s)\Gamma(\frac{n+s}{2})},
\end{equation*}
then integral of \(c(n,s)\rho^{2s}\varphi_{s,\rho}\) equals 1. Moreover, \(c(n,s)\rho^{2s}\varphi_{s,\rho}\) forms an approximate identity.

\noindent By replacing \(\varphi_s\) by \(\psi_{s,\gamma}\) in proof of \cite[Theorem 1.5]{Roncal-Thangavelu-Trace_hardy_IMRN} and adapting the same techniques as in \cite[Corollary 1.6]{Roncal-Thangavelu-Trace_hardy_IMRN}, we obtain the following proposition.
\begin{proposition}
    Let \(0<s<1\) and \(f,\mathcal{L}_sf \in L^2(\mathbb{H}^n)\) then
    \begin{equation}\label{analogue cor1.6}
    \langle \mathcal{L}_sf,f\rangle \geq  c(n,s)^{-1} 4^{2s} \left( \frac{\Gamma(\frac{n+1+s}{2})}{\Gamma(\frac{n+1-s}{2})}\right)^2 \int\limits_{\mathbb{H}^n}\frac{\psi_{s,\gamma}(z,t)}{\psi_{s,\gamma}\ast \lvert \cdot \rvert^{-Q+2s}(z,t)}\lvert f(z,t)\rvert^2\,dz\,dt.
\end{equation}
\end{proposition}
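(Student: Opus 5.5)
We follow the route of \cite[Theorem 1.5, Corollary 1.6]{Roncal-Thangavelu-Trace_hardy_IMRN}, but with the Cowling–Haagerup type kernel \(\varphi_{s,\rho}\) replaced by \(\psi_{s,\gamma}\). The engine is a ground state representation. If \(g>0\) satisfies \(\mathcal{L}_s g = w\, g\) for a nonnegative potential \(w\), then for \(f=gh\) one writes
\[
\langle \mathcal{L}_s f,f\rangle-\int_{\mathbb{H}^n} w\,|f|^2 \;=\; c\iint \frac{|h(x)-h(y)|^2\,g(x)g(y)}{|y^{-1}x|^{Q+2s}}\,dx\,dy\;\ge 0 .
\]
This uses the fact that for \(0<s<1\) the conformal operator \(\mathcal{L}_s\) has the integral representation
\[
\mathcal{L}_sf(x)=c_{n,s}\,\mathrm{p.v.}\int \frac{f(x)-f(y)}{|y^{-1}x|^{Q+2s}}\,dy
\]
with the homogeneous Korányi norm, as established in \cite{Roncal-Thangavelu-Trace_hardy_IMRN}. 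The plan is therefore to exhibit \(g\) and \(w\) so that \(w\) is exactly the weight appearing in \eqref{analogue cor1.6}.

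Since \(\mathcal{L}_s\) has an explicit fundamental solution \(c\,|(z,t)|^{-Q+2s}\) \cite{Cowling-comp_bdd_multipliers}, the natural choice is \(g=\psi_{s,\gamma}\ast|\cdot|^{-Q+2s}\), up to the normalizing constant of that fundamental solution. Then \(\mathcal{L}_s g = C\,\psi_{s,\gamma}\), which gives \(w=C\,\psi_{s,\gamma}/(\psi_{s,\gamma}\ast|\cdot|^{-Q+2s})\), precisely the quotient in the statement. The constant \(C\) must come out as \(c(n,s)^{-1}4^{2s}\bigl(\Gamma(\tfrac{n+1+s}{2})/\Gamma(\tfrac{n+1-s}{2})\bigr)^2\). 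To get it we combine two ingredients. The first is the normalization from \cite{Roncal-Thangavelu-Trace_hardy_IMRN} that \(c(n,s)\rho^{2s}\varphi_{s,\rho}\) has integral one. The second is the known constant in \(\mathcal{L}_s|x|^{-Q+2s}=c\,\delta\), which produces the Gamma ratio (the multiplier of \(\mathcal{L}_s\) at the \(k\)-th level is \((2|\lambda|)^s\Gamma(\tfrac{2k+n+1+s}{2})/\Gamma(\tfrac{2k+n+1-s}{2})\), and its inverse kernel is explicit). If matching the constant to \(c(n,s)\) proves awkward, one can instead take \(g=\varphi_{s,\rho}\), for which \cite{Roncal-Thangavelu-Trace_hardy_IMRN} computes \(\mathcal{L}_s\varphi_{s,\rho}\) explicitly, and then relate the result to \(\psi_{s,\gamma}\) through the convolution identity.

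The ordered steps are as follows.
\begin{enumerate}
\item Verify that \(\psi_{s,\gamma}\) is positive, integrable and smooth. Since it decays like \(|(z,t)|^{-(n+1+s)}\cdot|\cdot|^{-0}\) in the homogeneous sense, \(g=\psi_{s,\gamma}\ast|\cdot|^{-Q+2s}\) is finite, positive and smooth.
\item Compute \(\mathcal{L}_sg\) through the fundamental solution, obtaining \(\mathcal{L}_s g = C\psi_{s,\gamma}\).
\item Derive the ground state identity for \(f\in C_c^\infty\) by substituting \(f=gh\) into the singular integral form of \(\mathcal{L}_s\) and symmetrizing.
\item Drop the nonnegative double integral, and pass from \(C_c^\infty\) to the class \(f,\mathcal{L}_sf\in L^2\) by density, using Fatou's lemma on the right-hand side.
\end{enumerate}

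The main obstacle is the ground state identity in step 3 together with tracking the exact constant. The symmetrization requires \(g\) to be smooth enough that the principal value defining \(\mathcal{L}_s g\) converges and coincides with the spectrally defined \(\mathcal{L}_s g\). Because \(g\) is not in \(L^2\), this identification must be justified. We would handle it by approximating \(g\) with \(g_\epsilon=\psi_{s,\gamma}\ast(|\cdot|^{2}+\epsilon)^{(-Q+2s)/2}\), or by testing against \(C_c^\infty\) functions in the distributional sense, and then letting \(\epsilon\to0\) by dominated convergence. The restriction \(0<s<1\) enters exactly here, since it is needed for the singular integral representation to converge.
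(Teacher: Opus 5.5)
Your argument works in outline, but it does not follow \cite[Theorem 1.5, Corollary 1.6]{Roncal-Thangavelu-Trace_hardy_IMRN}, which is the proof the paper transcribes.

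\textbf{The paper's route.} Those results go through the extension problem. A trace Hardy inequality for functions $u(x,\rho)$ on $\mathbb{H}^n\times(0,\infty)$, with weight $\rho^{1-2s}$, is proved by comparing $u$ with the positive solution whose boundary value is $\psi_{s,\gamma}\ast|\cdot|^{-Q+2s}$. The weighted normal derivative of that solution at $\rho=0$ is a multiple of $\mathcal{L}_s(\psi_{s,\gamma}\ast|\cdot|^{-Q+2s})$, i.e.\ of $\psi_{s,\gamma}$. One then takes $u$ to be the solution with boundary value $f$, whose weighted Dirichlet energy is a multiple of $\langle\mathcal{L}_sf,f\rangle$.

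\textbf{Your route.} You stay on $\mathbb{H}^n$ and use the nonlocal representation of $\mathcal{L}_s$ with kernel $|y^{-1}x|^{-Q-2s}$, together with a Frank--Lieb--Seiringer ground state representation.

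\textbf{The common core.} Both routes rest on one computation, and it also fixes your constant. Take the Cowling--Haagerup identity $\mathcal{L}_s\varphi_{-s,\rho}=C_2(n,s)\rho^{2s}\varphi_{s,\rho}$, with $C_2(n,s)=4^{2s}\bigl(\Gamma(\tfrac{n+1+s}{2})/\Gamma(\tfrac{n+1-s}{2})\bigr)^2$. Let $\rho\to0$: $c(n,s)\rho^{2s}\varphi_{s,\rho}$ is an approximate identity and $\varphi_{-s,\rho}\uparrow|\cdot|^{-Q+2s}$. This gives $\mathcal{L}_s|\cdot|^{-Q+2s}=c(n,s)^{-1}C_2(n,s)\delta$, so $\mathcal{L}_sg/g$ is exactly the weight in the statement.

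\textbf{Trade-offs.} Your route yields an identity with an explicit nonnegative remainder, which is useful for sharpness questions. It needs the singular-integral formula, hence $0<s<1$, and a check that the principal value computes $\mathcal{L}_sg$ for $g\notin L^2$. The extension route avoids the nonlocal formula and gives the stronger trace inequality as a by-product.

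\textbf{Corrections needed.}

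First, Step 1 is false. We have $\psi_{s,\gamma}\asymp(1+|x|)^{-(n+1+s)}$ and $n+1+s<Q$, so $\psi_{s,\gamma}\notin L^1(\mathbb{H}^n)$. Integrating out $t$ leaves a multiple of $(\gamma+|z|^4)^{-(n-1+s)/4}$, which is not integrable on $\mathbb{C}^n$. Finiteness of $g$ comes instead from $(n+1+s)+(Q-2s)>Q$, i.e.\ $s<n+1$, and $g$ decays like $|x|^{-(n+1-s)}$. Accordingly, verify $\mathcal{L}_sg=c(n,s)^{-1}C_2(n,s)\psi_{s,\gamma}$ by pairing with $\mathcal{L}_s\phi=O(|x|^{-Q-2s})$ for $\phi\in C_c^\infty$. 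This is legitimate because $\int\psi_{s,\gamma}(y)(1+|y|)^{-Q+2s}\,dy<\infty$.

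Second, your fallback cannot reach the statement. A ground state $g$ produces the weight $\mathcal{L}_sg/g$, so the weight is determined by the choice of $g$. Note also that the Cowling--Haagerup identity concerns $\varphi_{-s,\rho}$, not $\varphi_{s,\rho}$. Taking $g=\varphi_{-s,\rho}$ gives the non-homogeneous weight $C_2(n,s)\rho^{2s}\varphi_{s,\rho}/\varphi_{-s,\rho}$, which is a different inequality.

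Third, for your regularization step, use $\psi_{s,\gamma}\ast\varphi_{-s,\rho}$ rather than $\psi_{s,\gamma}\ast(|\cdot|^2+\epsilon)^{(-Q+2s)/2}$. The former has the explicit image $C_2(n,s)\rho^{2s}\psi_{s,\gamma}\ast\varphi_{s,\rho}$ and increases monotonically to $g$.
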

In the above equation, \(Q=2n+2\) refers to the homogeneous dimension of \(\mathbb{H}^n\). We have not included the proof of this proposition as it is a verbatim repetition of \cite[Corollary 1.6]{Roncal-Thangavelu-Trace_hardy_IMRN} only with some obvious changes.

Using Fatou's lemma in \eqref{analogue cor1.6}, we deduce the inequality
\begin{equation}\label{L_s inequality after fatou's lemma}
   \langle \mathcal{L}_sf,f\rangle \geq  c(n,s)^{-1} 4^{2s} \left( \frac{\Gamma(\frac{n+1+s}{2})}{\Gamma(\frac{n+1-s}{2})}\right)^2 \int\limits_{\mathbb{H}^n}\frac{\psi_{s,0}(z,t)}{\psi_{s,0}\ast \lvert \cdot \rvert^{-Q+2s}(z,t)}\lvert f(z,t)\rvert^2\,dz\,dt. 
\end{equation}
Note that \(\psi_{s,0}(z,t)=|z|^{-(n+1+s)}\) a choice that allows us to calculate \(|\cdot |^{-Q+2s} \ast \psi_{s,0}(z,t)=\psi_{s,0} \ast | \cdot |^{-Q+2s}(z,t)\).

We have 
\begin{align*}
    |\cdot |^{-Q+2s} \ast \psi_{s,0}(z,t)&=\int\limits_{\mathbb{C}^n}\int\limits_{-\infty}^{\infty}\lvert (z,t)(w,b)^{-1}\rvert^{-Q+2s} |w|^{-(n+1+s)}\,db\,dw\\
    &=\int\limits_{\mathbb{C}^n}\int\limits_{-\infty}^{\infty}\lvert (z,t)(-w,-b)\rvert^{-Q+2s} |w|^{-(n+1+s)}\,db\,dw.
\end{align*}
We can calculate the inner integral explicitly as
\begin{align*}
    \int\limits_{-\infty}^{\infty}\lvert (z,t)(-w,-b)\rvert\,db&=\int\limits_{\mathbb{C}^n} \left( |z-w|^4+16(t-b-\operatorname{Im}(z\bar{w}))^2\right)^{-\frac{(n+1-s)}{2}}\,db\\
    &=\int\limits_{\mathbb{C}^n} \left( |z-w|^4+16b^2\right)^{-\frac{(n+1-s)}{2}}\,db.
\end{align*}
 Using the fact that
 $$ \int\limits_{-\infty}^\infty (1+b^2)^{-\nu-1/2} db = B(1/2, \nu), $$ 
 where \(\nu>0\) and \(B(1/2,\nu)\) refers to the beta function evaluated at \((1/2,\nu)\).
 We see that
$$ \int\limits_{-\infty}^\infty |(z,t)(-w,-b)|^{-Q+2s}\,db =  \frac{1}{4}\, B(1/2, (n-s)/2) |z-w|^{-2n+2s}.$$
Therefore,
\begin{equation*}
     |\cdot|^{-Q+2s} \ast \psi_{s,0}(z) =\frac{1}{4}\, B(1/2, (n-s)/2)  \int\limits_{\mathbb{C}^n} |z-w|^{-2n+2s}\, |w|^{-2n+(n-1-s)}\, dw.
\end{equation*}
Next, we make use of the following formula (see \cite[Chapter 5]{book-fractional_integrals})
\begin{equation*}
    \int\limits_{\mathbb{C}^n} |z-w|^{-2n+\alpha}\, |w|^{-2n+\beta}\, dw = \frac{ \zeta(\alpha)\zeta(\beta)}{\zeta(\alpha+\beta)} \, |z|^{-2n+(\alpha+\beta)},
\end{equation*}
where
$$ \zeta(\alpha) =  \frac{\pi^n 2^\alpha \Gamma(\alpha/2)}{\Gamma((2n-\alpha)/2)}.$$
This implies 
$$  |\cdot|^{-Q+2s} \ast \psi_{s,0}(z) \,  =  \frac{1}{4}\, B(1/2, (n-s)/2)  \frac{ \zeta(n-1-s) \zeta(2s)}{\zeta(n-1+s)} \, |z|^{-(n+1-s)}.$$
Consequently applying this in \eqref{L_s inequality after fatou's lemma}, we obtain
$$
\langle \mathcal{L}_{s}f, f \rangle \geq  c(n,s)^{-1}\, b(n,s)^{-1} \,4^{2s} \Big( \frac{\Gamma(\frac{n+1+s}{2})}{\Gamma(\frac{n+1-s}{2})} \Big)^2 \, \int\limits_{\mathbb{H}^n}  |z|^{-2s} \,  |f(z,t)|^2 \, dz \,dt,
$$
where we have written
$$ b(n,s) = \frac{1}{4}\, B(1/2, (n-s)/2)  \frac{ \zeta(n-1-s) \zeta(2s)}{\zeta(n-1+s)}.  $$ 
A simple  calculation  shows that 
$$ b(n,s) = \frac{\pi^{n+1/2}}{4} \, \frac{ \Gamma(\frac{n-s}{2})}{\Gamma(n-s)} \,\frac{ \Gamma(\frac{n-1-s}{2})}{\Gamma( \frac{n-1+s}{2})}\, \frac{ \Gamma(s)}{\Gamma( \frac{n+1+s}{2})}.$$
Moreover, using Legendre's duplication formula 
$$ \Gamma(z) \Gamma(z+1/2) = 2^{1-2z} \sqrt{\pi} \Gamma(2z) $$ 
and simplifying we get
$$ b(n,s) c(n,s) = 4^s    \,\frac{ \Gamma(\frac{n-1-s}{2})}{\Gamma( \frac{n-1+s}{2})}\,\,\frac{ \Gamma(\frac{n+1+s}{2})}{\Gamma( \frac{n+1-s}{2})}\,=  4^s  \,\frac{n-1+s}{n-1-s}.$$
Finally we get the following Hardy inequality: 
 \begin{equation}\label{hardy via trace hardy} 
 \langle\mathcal{L}_{s}f, f \rangle \geq   \,4^{s} \, \frac{n-1-s}{n-1+s} \left(\frac{\Gamma(\frac{n+1+s}{2})}{\Gamma(\frac{n+1-s}{2})}\right)^2  \,  \,  \int\limits_{\mathbb{H}^n}  |z|^{-2s} \,  |f(z,t)|^2 \, dz\, dt.
  \end{equation}

Using the above, we can write the Pitt's inequality 
\begin{equation}\label{pitts via trace hardy}
\langle \mathcal{L}_{-s}f,f \rangle\leq 4^{-s} \, \frac{n-1+s}{n-1-s} \left(\frac{\Gamma(\frac{n+1-s}{2})}{\Gamma(\frac{n+1+s}{2})}\right)^2  \,  \,  \int\limits_{\mathbb{H}^n}  |z|^{2s} \,  |f(z,t)|^2 \, dz\, dt.
\end{equation}
Writing in terms of the Strichartz Fourier transform, we obtain the following theorem.
\begin{theorem}
    Let \(f\in \mathcal{S}(\mathbb{H}^n)\) and \(0<s<1\) then the following Pitt's inequality hold
    \begin{align*}
   \int\limits_{\Omega \times \mathbb{C}^n}(2|\lambda|)^{-s}&\left(\frac{\Gamma\left(\frac{2k+n+1-s}{2}\right)}{\Gamma\left(\frac{2k+n+1+s}{2}\right)}\right)\left| \tilde{f}(a,w)\right|^2 d\nu_2(a)\;dw \\
&\leq  (2\pi)^{-1}4^{-s} \, \frac{n-1+s}{n-1-s} \left(\frac{\Gamma(\frac{n+1-s}{2})}{\Gamma(\frac{n+1+s}{2})}\right)^2  \,  \,  \int\limits_{\mathbb{H}^n}  |z|^{2s} \,  |f(z,t)|^2 \, dz\, dt. 
        \end{align*}
\end{theorem}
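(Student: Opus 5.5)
The plan is to combine the Pitt inequality \eqref{pitts via trace hardy} with the Strichartz Fourier transform identity for $\langle \mathcal{L}_{-s}f,f\rangle$ that was derived above, just before Theorem \ref{theorem pitts weight |z|2s}. Concretely, I will (i) obtain \eqref{pitts via trace hardy} from the Hardy inequality \eqref{hardy via trace hardy}, and (ii) rewrite the left-hand side of \eqref{pitts via trace hardy} as the weighted $L^2$ norm of $\tilde f$ with respect to $d\nu_2\,dw$.

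For step (i), I would copy the Euclidean duality argument given after \eqref{hardy ineq}. Let $g=\mathcal{L}_{-s}f$, so that $f=\mathcal{L}_s g$; this is legitimate because $\mathcal{L}_s$ and $\mathcal{L}_{-s}$ are mutually inverse spectral multipliers. Cauchy--Schwarz with the weight $|z|^{s}$ gives
\[
\langle \mathcal{L}_s g,g\rangle^2=|\langle f,g\rangle|^2\le \Big(\int_{\mathbb{H}^n}|z|^{2s}|f|^2\Big)\Big(\int_{\mathbb{H}^n}|z|^{-2s}|g|^2\Big).
\]
Applying \eqref{hardy via trace hardy} to $g$ bounds the second factor by $A_{n,s}^{-1}\langle \mathcal{L}_s g,g\rangle$, where $A_{n,s}$ is the Hardy constant. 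Dividing through yields $\langle \mathcal{L}_{-s}f,f\rangle=\langle g,\mathcal{L}_s g\rangle\le A_{n,s}^{-1}\int|z|^{2s}|f|^2$, and $A_{n,s}^{-1}$ is exactly the constant appearing in \eqref{pitts via trace hardy}.

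For step (ii), I use \eqref{expression L_-sf,f}, the relation $\|f^\lambda*_\lambda\varphi^{n-1}_{k,\lambda}\|_2^2=(2\pi)^n|\lambda|^{-n}\|\widehat f(\lambda)P_k(\lambda)\|_{HS}^2$, and \eqref{strichartz relation formula}. Together these convert the $k$-sum and $\lambda$-integral into an integral against $d\nu_2(a)\,dw$, with multiplier $(2|\lambda|)^{-s}\Gamma(\frac{2k+n+1-s}{2})/\Gamma(\frac{2k+n+1+s}{2})$. The normalizations need care: $d\nu_2$ carries the factor $(2\pi)^{-2n-1}c_{n,k}^{-2}|\lambda|^{2n}$, while \eqref{expression L_-sf,f} carries $(2\pi)^{-1}(2\pi)^{-2n}|\lambda|^{2n}$. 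Matching the two gives the identity up to a harmless constant, which accounts for the extra factor $(2\pi)^{-1}$ in the stated bound. Once that is checked, substituting into \eqref{pitts via trace hardy} finishes the proof.

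The main obstacle is justifying step (i) for $f\in\mathcal{S}(\mathbb{H}^n)$. The function $g=\mathcal{L}_{-s}f$ need not be Schwartz, so I must verify that $g,\mathcal{L}_sg\in L^2$ and that $\int|z|^{-2s}|g|^2<\infty$; this is what allows the Proposition and \eqref{hardy via trace hardy} to be applied to $g$. I would handle this by approximation. First take $f$ with $\widehat f(\lambda)$ supported away from $\lambda=0$ and in finitely many Hermite eigenspaces; for such $f$ the function $g$ lies in $L^2$ with all needed regularity. Then pass to general $f$ by density, using that both sides of \eqref{pitts via trace hardy} are continuous in the relevant norms, with Fatou's lemma applied on the left. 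A secondary point is that the constant requires $n-1-s>0$, so for $n=1$ the argument has to be read with that restriction in mind.
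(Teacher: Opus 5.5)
Your route is the paper's: \eqref{hardy via trace hardy} gives \eqref{pitts via trace hardy} by the Cauchy--Schwarz duality the paper uses after \eqref{hardy ineq}. Then \eqref{expression L_-sf,f} together with \eqref{strichartz relation formula} rewrites $\langle \mathcal{L}_{-s}f,f\rangle$ in terms of $\tilde f$. Step (i) is fine.

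The step that fails is the end of (ii). The two normalizations you list are the same. First, $(2\pi)^{-1}(2\pi)^{-2n}=(2\pi)^{-2n-1}$. Second, $\|f^\lambda *_\lambda \varphi^{n-1}_{k,\lambda}\|_2^2=c_{n,k}^{-2}\int_{\mathbb{C}^n}|\tilde f(a,w)|^2\,dw$ supplies exactly the weight $c_{n,k}^{-2}$ in $d\nu_2$. So no constant is left over: the left-hand side of the statement equals $\langle \mathcal{L}_{-s}f,f\rangle$ exactly. This is the identity displayed just before Theorem \ref{theorem pitts weight |z|2s}, whose constant carries no $(2\pi)^{-1}$. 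What your argument (and the paper's) actually proves is the inequality with constant $4^{-s}\frac{n-1+s}{n-1-s}\Bigl(\frac{\Gamma(\frac{n+1-s}{2})}{\Gamma(\frac{n+1+s}{2})}\Bigr)^2$. That is $2\pi$ times the stated constant, so the factor $(2\pi)^{-1}$ is not accounted for by anything in your proof.

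Nor can the factor be recovered another way, at least for $n\ge 3$. Run your Cauchy--Schwarz argument in reverse: $|\langle g,h\rangle|^2\le \langle \mathcal{L}_s g,g\rangle\,\langle \mathcal{L}_{-s}h,h\rangle$, then take the supremum over $h$ with $\int|z|^{2s}|h|^2\le 1$. The stated bound would then yield the Hardy inequality with constant $2\pi\, 4^{s}\frac{n-1-s}{n-1+s}\Bigl(\frac{\Gamma(\frac{n+1+s}{2})}{\Gamma(\frac{n+1-s}{2})}\Bigr)^2$. However, the test-function computation behind \eqref{bounds for sharp constant} shows that no constant larger than $4^{s}\frac{n-s}{n}\Bigl(\frac{\Gamma(\frac{n+1+s}{2})}{\Gamma(\frac{n+1-s}{2})}\Bigr)^2$ is admissible. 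For $n\ge 3$ and $0<s<1$ we have $2\pi\frac{n-1-s}{n-1+s}>\frac{2\pi}{3}>1>\frac{n-s}{n}$, which is a contradiction. So the $(2\pi)^{-1}$ in the statement should be regarded as a slip. You should state the constant your argument actually delivers rather than attribute the factor to a normalization mismatch.

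Your caveat that one needs $n\ge 2$ is correct, since for $n=1$ the constant is negative. Finally, you can avoid the approximation scheme for $g=\mathcal{L}_{-s}f$ altogether. Write $\langle \mathcal{L}_{-s}f,f\rangle=\sup_{h}|\langle f,h\rangle|^2/\langle \mathcal{L}_s h,h\rangle$ over nonzero Schwartz $h$. Then apply Cauchy--Schwarz and \eqref{hardy via trace hardy} to $h$ only, which needs no regularity of $g$.
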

 The constant obtained in the inequality \eqref{hardy via trace hardy} may or may not be sharp, so we obtain bounds for the sharp constant in the \eqref{hardy via trace hardy}. Suppose we have
  $$ (\mathcal{L}_{s}f, f ) \geq  C\,  \,  \int\limits_{\mathbb{H}^n}  |z|^{-2s} \,  |f(z,t)|^2 \, dz \,dt,$$
  for sharp constant \(C\).
  
  \noindent By taking $ f = \varphi_{-s,\rho} $  in the above inequality, we get
  $$ \rho^{2s}\,C_2(n,s) \int\limits_{\mathbb{H}^n}  \left((\rho^2+|z|^2)^2+16t^2\right)^{-(n+1)} \,dz\, dt \geq C \int\limits_{\mathbb{H}^n} \varphi_{-s,\rho}(z,t)^2 \, |z|^{-2s} \, dz\, dt, $$
  where 
  $$ C_2(n,s) = 4^{2s} \Big( \frac{\Gamma(\frac{n+1+s}{2})}{\Gamma(\frac{n+1-s}{2})} \Big)^2 .$$
  Note that we have used the Cowling-Haagerup identity \cite[Theorem 3.7]{Roncal-Thangavelu-Trace_hardy_IMRN} to simplify the expression for \(\langle \mathcal{L}_sf,f \rangle\).
  
  \noindent By applying change of variables, the above leads to the inequality
  $$ C \leq C_2(n,s) \frac { \displaystyle\int\limits_{\mathbb{H}^n}  \left((1+|z|^2)^2+16t^2\right)^{-(n+1)} \,dz \,dt }{ \displaystyle\int\limits_{\mathbb{H}^n}  \left((1+|z|^2)^2+16t^2\right)^{-(n+1-s)} \, |z|^{-2s}\,dz\, dt }.$$
Furthermore change of variables $ t \rightarrow \frac{1}{4}(1+|z|^2) t $ simplifies the ratio on  the right hand side into
  $$  \frac { \displaystyle\int\limits_{-\infty}^\infty  \left(1+t^2\right)^{-(n+1)}  \,dt }{\displaystyle \int\limits_{-\infty}^\infty  \left(1+t^2\right)^{-(n+1-s)}  \,dt } \,\,\,\frac { \displaystyle\int\limits_{\mathbb{C}^n}  (1+|z|^2)^{-2n-1}\, dz }{ \displaystyle\int\limits_{\mathbb{C}^n}  (1+|z|^2)^{-2n+2s-1} \, |z|^{-2s}\,dz } . $$
  We compute the above integrals explicitly.
  Recall the formula \cite[(5.12.3)]{frank-oliver-handbook}
  $$
  \int\limits_0^\infty (1+t)^{-b}t^{a-1}\,dt=\frac{\Gamma(a)\Gamma(b-a)}{\Gamma(b)},
  $$
  for \(a>0,\;b>a\).
 Using this, we obtain
  $$
    \frac { \displaystyle\int\limits_{-\infty}^\infty  \left(1+t^2\right)^{-(n+1)} \, dt }{ \displaystyle\int\limits_{-\infty}^\infty  \left(1+t^2\right)^{-(n+1-s)}\,  dt }= \frac{\Gamma(1/2) \Gamma(n+1/2)}{\Gamma(n+1)} \frac{ \Gamma(n-s+1)}{ \Gamma(1/2)\Gamma(n-s+1/2)} .
  $$
  Moreover, writing the remaining integrals in terms of polar coordinates, we have
$$\frac { \displaystyle\int\limits_{\mathbb{C}^n}  (1+|z|^2)^{-2n-1} \,dz }{ \displaystyle\int\limits_{\mathbb{C}^n}  (1+|z|^2)^{-2n+2s-1} \, |z|^{-2s}\,dz }=\frac { \displaystyle\int\limits_{0}^\infty  \left(1+r^2\right)^{-2n-1}  r^{2n-1} dr}{ \displaystyle\int\limits_{0}^\infty  \left(1+r^2\right)^{-2n+2s-1}  r^{2n-2s-1}dr }.$$
After evaluation we get  
 $$\frac { \displaystyle\int\limits_{\mathbb{C}^n}  (1+|z|^2)^{-2n-1} \,dz }{ \displaystyle\int\limits_{\mathbb{C}^n}  (1+|z|^2)^{-2n+2s-1} \, |z|^{-2s}\,dz } =
  \frac{\Gamma(n) \Gamma(n+1)}{\Gamma(2n+1)} \frac{ \Gamma(2n-2s+1)}{ \Gamma(n-s)\Gamma(n+1-s)} .$$
  Therefore, we deduce the inequality 
  $$ C \leq C_2(n,s)\,   \frac{ \Gamma(n+1/2)}{\Gamma(2n+1)}   \frac{ \Gamma(2n-2s+1)}{\Gamma(n-s+1/2)}  \frac{ \Gamma(n)}{ \Gamma(n-s)} . $$
  In view of Legendre's duplication formula, the above expression can be simplified as 
  $$ C \leq C_2(n,s)\, \frac{\sqrt{\pi} 2^{-2n}}{\Gamma(n+1)}\, \frac{2^{2n-2s} \Gamma(n+1-s)}{\sqrt{\pi}}\, \frac{ \Gamma(n)}{ \Gamma(n-s)} = C_2(n,s) 4^{-s} \frac{n-s}{n} .$$

\noindent Note that for $ 0 < s <  n+1 $, $ \frac{n-1-s}{n-1+s} < \frac{n-s}{n} $. Thus, for \(0<s<1\) we see that the sharp constant \(C\), satisfies the bounds
\begin{equation}\label{bounds for sharp constant}
\,4^{s} \, \frac{n-1-s}{n-1+s} \left(\frac{\Gamma(\frac{n+1+s}{2})}{\Gamma(\frac{n+1-s}{2})}\right)^2  \leq  C \leq  \,4^{s} \, \frac{n-s}{n} \left(\frac{\Gamma(\frac{n+1+s}{2})}{\Gamma(\frac{n+1-s}{2})}\right)^2.
\end{equation}

In continuation of the study of bounds for the sharp constant in Hardy's inequality, we would like to add a remark for the sharp constant in the Hardy's inequality with the weight $ |(z,a)|^{-2s} $.
\begin{remark}
If $ C_H $ denotes the sharp constant in the Hardy's inequality with the weight $ |(z,a)|^{-2s} $ we should have $ C \leq C_H. $ From \cite{corr-Roncal-Thangavelu-trace_hardy_imrn} we know $ C_H \leq C_2(n,s) $. Thus we have
 $$ 4^{s} \, \frac{n-1-s}{n-1+s} \left(\frac{\Gamma(\frac{n+1+s}{2})}{\Gamma(\frac{n+1-s}{2})}\right)^2  \leq  C_H \leq  \,4^{2s} \, \left(\frac{\Gamma(\frac{n+1+s}{2})}{\Gamma(\frac{n+1-s}{2})}\right)^2  .$$
\end{remark}

In further addition to our study of relation of Hardy's inequality and Pitt's inequality, we now state the Hardy inequalities for non-homogeneous weight, proved in \cite{Roncal-thangavelu-Hardy_inequality}. Denote \(W^{s,2}(\mathbb{H}^n)\) as the Sobolev space consisting of all square integrable functions \(f\) on \(\mathbb{H}^n\) for which \(\mathcal{L}^{s/2}f \in L^2(\mathbb{H}^n)\).
\begin{theorem}(For non-homogeneous weight)\label{hardy nonhomogeneous for L_s}
 Let \(0 < s < \frac{n+1}{2}\) and \(\delta > 0\). Then for all \(f \in W^{s,2}(\mathbb{H}^n)\) the following holds
\[
(4\delta)^s \frac{\Gamma\left(\frac{1+n+s}{2}\right)^2}{\Gamma\left(\frac{1+n-s}{2}\right)^2} \int\limits_{\mathbb{H}^n} \frac{|f(z,t)|^2}{\left(\left(\delta + \frac{1}{4}|z|^2\right)^2 + t^2\right)^s} dz\,dt \leq \langle \mathcal{L}_s f, f \rangle.
\]
\end{theorem}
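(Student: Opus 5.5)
The plan is to follow the ground-state substitution / trace Hardy method of Roncal--Thangavelu, replacing the homogeneous weight by the non-homogeneous function
\[
\varphi_{s,\delta}(z,t)=\Big(\big(\delta+\tfrac14|z|^2\big)^2+t^2\Big)^{-\frac{n+1+s}{2}},
\]
which is, up to the scaling $\rho^2=4\delta$ and the normalisation $t\mapsto 4t$, the function $\varphi_{s,\rho}$ introduced above. The key input is the explicit action of $\mathcal{L}_s$ on this family, coming from the Cowling--Haagerup identity \cite[Theorem 3.7]{Roncal-Thangavelu-Trace_hardy_IMRN}. That identity gives that $\mathcal{L}_s\varphi_{-s,\delta}$ is a constant multiple of $\varphi_{s,\delta}$:
\[
\mathcal{L}_s\varphi_{-s,\delta}=(4\delta)^s\,\frac{\Gamma\left(\frac{1+n+s}{2}\right)^2}{\Gamma\left(\frac{1+n-s}{2}\right)^2}\,\varphi_{s,\delta}.
\]
Equivalently, for the positive function $\varphi_{-s,\delta}$ we have
\[
\frac{\mathcal{L}_s\varphi_{-s,\delta}}{\varphi_{-s,\delta}}=c\,\big((\delta+\tfrac14|z|^2)^2+t^2\big)^{-s},
\]
with $c$ exactly the constant appearing in Theorem \ref{hardy nonhomogeneous for L_s}. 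The first step is to verify this: compute the group Fourier transform of $\varphi_{-s,\delta}$ (a Laguerre-type expansion in $k$ whose coefficients are ratios of Gamma functions), and check that multiplying by the spectral multiplier $(2|\lambda|)^s\Gamma(\frac{2k+n+1+s}{2})/\Gamma(\frac{2k+n+1-s}{2})$ of $\mathcal{L}_s$ from \eqref{definition of L_s} turns it into the transform of $\varphi_{s,\delta}$ up to that constant.

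The second step is the general Hardy principle. For a positive $g$ with $\mathcal{L}_s g=Vg$ and $V\ge0$, one has $\langle\mathcal{L}_s f,f\rangle\ge\int V|f|^2$. I would obtain this from an integral representation of $\mathcal{L}_s$ for $0<s<1$ as a singular integral with a positive kernel $K_s$. The representation is
\[
\langle\mathcal{L}_s f,f\rangle-\int V|f|^2=\tfrac12\iint\Big|\tfrac{f(x)}{g(x)}-\tfrac{f(y)}{g(y)}\Big|^2g(x)g(y)K_s(y^{-1}x)\,dx\,dy\ge0,
\]
obtained by writing $f=g\cdot(f/g)$ and expanding.

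For the full range $0<s<\frac{n+1}{2}$, where this kernel representation is unavailable, I would instead use the trace/extension method. Represent $\langle\mathcal{L}_s f,f\rangle$ as a minimum of a weighted Dirichlet energy $\int_0^\infty\int|\tilde\nabla u|^2\rho^{1-2s}$ over extensions $u$ of $f$ to $\mathbb{H}^n\times(0,\infty)$. Then take the explicit extension $U$ of $\varphi_{-s,\delta}$, given via $\varphi_{-s,\sqrt{\delta+\rho^2}}$-type functions, and perform the substitution $u=U\,w$ with an integration by parts. The boundary term then produces exactly $\int\frac{\mathcal{L}_s\varphi_{-s,\delta}}{\varphi_{-s,\delta}}|f|^2$. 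Alternatively, one can argue by Cauchy--Schwarz as in Section \ref{sec 7}: write $|\langle f,h\rangle|^2\le\langle\mathcal{L}_s f,f\rangle\langle\mathcal{L}_s^{-1}h,h\rangle$ and test against the kernel of $\mathcal{L}_s^{-1}$, which is a Schur test with the test function $\varphi_{-s,\delta}$.

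The third step is a density argument, extending the inequality from $C_c^\infty$ to $W^{s,2}(\mathbb{H}^n)$, together with a check that the integrability conditions on $\varphi_{\pm s,\delta}$ hold precisely when $s<\frac{n+1}{2}$. The main obstacle is the first step: the exact Fourier-side computation that makes $\varphi_{-s,\delta}$ a generalized eigenfunction. Here I would rely on the Laguerre generating-function identity underlying the Cowling--Haagerup formula rather than recompute it by hand.
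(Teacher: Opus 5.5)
The paper gives no proof of this statement; it quotes it from Roncal--Thangavelu \cite{Roncal-thangavelu-Hardy_inequality}. Their argument rests on exactly your Steps 1 and 2: the generalised Cowling--Haagerup identity $\mathcal{L}_s\varphi_{-s,\delta}=c\,\varphi_{s,\delta}$, an explicit integral representation of $\mathcal{L}_s$ with a positive kernel, and the resulting ground state representation. Your constant is the right one. With $\rho^2=4\delta$ one has $(\rho^2+|z|^2)^2+16t^2=16\big((\delta+\tfrac14|z|^2)^2+t^2\big)$, so the Cowling--Haagerup identity as used in Section~\ref{sec 7} becomes yours with no rescaling of $t$. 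For $0<s<1$ your plan is the cited one.

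\textbf{The step that fails.} It is the one you rely on for $1\le s<\frac{n+1}{2}$, a range that is nonempty as soon as $n\ge 2$. The extension/trace method does not reach it. In the form you write it, the description of $\langle\mathcal{L}_sf,f\rangle$ as the minimal energy $\int_0^\infty\!\int|\tilde\nabla u|^2\rho^{1-2s}$ over extensions of $f$ comes from the extension problem for the CR fractional sublaplacian, which is only available for $0<s<1$. For $s\ge 1$ the weight $\rho^{1-2s}$ is not integrable at $\rho=0$. An extension whose horizontal gradient tends to that of a non-constant $f$ therefore already has infinite energy, and you would need a higher order extension problem. So this branch has exactly the same restriction as the kernel representation it was meant to replace.

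\textbf{What does work.} The alternative you mention only in passing handles the full range and makes the integral representation unnecessary, so it should be your main argument. Let $g=\varphi_{-s,\delta}$ and $V=\mathcal{L}_sg/g=c\,\big((\delta+\tfrac14|z|^2)^2+t^2\big)^{-s}$. Let $G_s$ be the convolution kernel of $\mathcal{L}_s^{-1}$; it is a positive multiple of $|(z,t)|^{-Q+2s}$ for $0<s<n+1$ and is invariant under $x\mapsto x^{-1}$. Step 1 in inverted form says $(Vg)\ast G_s=g$. For $\phi\in C_c^\infty(\mathbb{H}^n)$, integrate the pointwise bound
\[
|\phi(x)\phi(y)|\le\frac12\Big(|\phi(x)|^2\,\frac{(Vg)(y)}{(Vg)(x)}+|\phi(y)|^2\,\frac{(Vg)(x)}{(Vg)(y)}\Big)
\]
against $G_s(y^{-1}x)\,dx\,dy$. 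This gives $\langle\mathcal{L}_s^{-1}\phi,\phi\rangle\le\int|\phi|^2V^{-1}$. Putting $\phi=Vf$ in $|\langle f,\phi\rangle|^2\le\langle\mathcal{L}_sf,f\rangle\,\langle\mathcal{L}_s^{-1}\phi,\phi\rangle$ then yields $\int V|f|^2\le\langle\mathcal{L}_sf,f\rangle$ for $f\in C_c^\infty$, and your density step finishes.

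This route treats the whole range uniformly and needs only positivity of the fundamental solution. What the ground state representation adds, for $0<s<1$, is an explicit nonnegative remainder and hence the equality cases.

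Finally, neither argument uses $\varphi_{-s,\delta}\in L^2$. That condition is equivalent to $s<\frac{n+1}{2}$, and it is what makes the constant attained (by $f=\varphi_{-s,\delta}$), not what makes the inequality true.
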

Next using the operator \(U_s=\mathcal{L}_s \mathcal{L}^{-s}\), with
$$
\|U_s\| = \sup_{k \ge 0} \left(\frac{2k+n}{2}\right)^{-s} \frac{\Gamma\left(\frac{2k+n}{2} + \frac{1+s}{2}\right)}{\Gamma\left(\frac{2k+n}{2} + \frac{1-s}{2}\right)}, 
$$
the authors in \cite{Roncal-thangavelu-Hardy_inequality} derived the following analogue for \(\mathcal{L}^s\).

\begin{theorem}\label{Hardy nonhomogeneous for L^s}
Let \(0 < s < \frac{n+1}{2}\) and \(\delta>0\). Then for all \(f \in W^{s,2}(\mathbb{H}^n)\) the following holds
\[ (4\delta)^s \frac{\Gamma\left(\frac{1+n+s}{2}\right)^2}{\Gamma\left(\frac{1+n-s}{2}\right)^2} \int\limits_{\mathbb{H}^n} \frac{|f(z, t)|^2}{\left(\left(\delta + \frac{1}{4}|z|^2\right)^2 + t^2\right)^s} dz\,dt \le \|U_s\|\langle\mathcal{L}^s f, f\rangle.
\]
\end{theorem}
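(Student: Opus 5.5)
The plan is to derive Theorem \ref{Hardy nonhomogeneous for L^s} directly from Theorem \ref{hardy nonhomogeneous for L_s}. The idea is to compare the quadratic forms $\langle \mathcal{L}_s f,f\rangle$ and $\langle \mathcal{L}^s f,f\rangle$ through the spectral (special Hermite) decomposition. The left-hand sides of the two theorems are identical, so it suffices to prove
\[
\langle \mathcal{L}_s f,f\rangle \le \|U_s\|\,\langle \mathcal{L}^s f,f\rangle
\]
for all $f\in W^{s,2}(\mathbb{H}^n)$, and then chain this with Theorem \ref{hardy nonhomogeneous for L_s}.

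First I would write both forms via the decomposition used in \eqref{expression L_-sf,f}, with $s$ in place of $-s$. By \eqref{definition of L_s} and orthogonality of the projections $f^\lambda\mapsto (2\pi)^{-n}|\lambda|^n f^\lambda*_\lambda\varphi^{n-1}_{k,\lambda}$, we get
\[
\langle \mathcal{L}_s f,f\rangle=(2\pi)^{-1}\int_{-\infty}^{\infty}\sum_{k=0}^\infty (2|\lambda|)^s\frac{\Gamma\left(\frac{2k+n}{2}+\frac{1+s}{2}\right)}{\Gamma\left(\frac{2k+n}{2}+\frac{1-s}{2}\right)}\,(2\pi)^{-2n}|\lambda|^{2n}\|f^\lambda*_\lambda\varphi^{n-1}_{k,\lambda}\|_2^2\,d\lambda .
\]
The analogous formula for $\langle \mathcal{L}^s f,f\rangle$ has multiplier $((2k+n)|\lambda|)^s$. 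Both multipliers are nonnegative, since for $0<s<\frac{n+1}{2}$ the Gamma arguments are positive.

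Next I would factor the $\mathcal{L}_s$ multiplier as
\[
(2|\lambda|)^s\frac{\Gamma(\cdot)}{\Gamma(\cdot)}=\big((2k+n)|\lambda|\big)^s\left(\frac{2k+n}{2}\right)^{-s}\frac{\Gamma\left(\frac{2k+n}{2}+\frac{1+s}{2}\right)}{\Gamma\left(\frac{2k+n}{2}+\frac{1-s}{2}\right)}.
\]
The second factor is exactly the multiplier of $U_s=\mathcal{L}_s\mathcal{L}^{-s}$ on the $k$-th eigenspace. It is bounded termwise by $\|U_s\|$, and summing and integrating gives the desired comparison.

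The only point needing care is that $\|U_s\|<\infty$. By Gautschi/Wendel-type asymptotics, $\Gamma(x+\frac{1+s}{2})/\Gamma(x+\frac{1-s}{2})\sim x^s$ as $x\to\infty$, so the ratio tends to $1$ and the supremum over $k$ is finite. Finiteness of $\langle\mathcal{L}^s f,f\rangle=\|\mathcal{L}^{s/2}f\|_2^2$ for $f\in W^{s,2}$ then justifies all the manipulations, and combining with Theorem \ref{hardy nonhomogeneous for L_s} finishes the proof.
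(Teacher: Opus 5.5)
Your proposal is correct and follows the route the paper indicates, via Roncal--Thangavelu: chain the Hardy inequality for \(\mathcal{L}_s\) with \(\langle \mathcal{L}_s f,f\rangle=\langle U_s\mathcal{L}^{s/2}f,\mathcal{L}^{s/2}f\rangle\le \|U_s\|\,\langle \mathcal{L}^s f,f\rangle\). Your termwise comparison of the two spectral multipliers is the same bound, and your check that \(\|U_s\|<\infty\) from the Gamma-ratio asymptotics is correct.
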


Following similar line of argument as before, we deduce an analogue of Pitts' inequality for non-zero function \(f\in \mathcal{S}(\mathbb{H}^n)\) and \(p=q=2\). 

\noindent Let 
$$
v(z,t)=\left(\left(\delta + \frac{1}{4}|z|^2\right)^2 + t^2\right)^{s/2}.
$$
From Theorem \ref{hardy nonhomogeneous for L_s}, we have 
$$
(4\delta)^s \frac{\Gamma\left(\frac{1+n+s}{2}\right)^2}{\Gamma\left(\frac{1+n-s}{2}\right)^2} \int\limits_{\mathbb{H}^n}\left (\frac{f(z,t)}{v(z,t)} \right)^2  dz\,dt \leq \langle \mathcal{L}_s f, f \rangle.
$$
Then corresponding Pitt's inequality follows as 
\begin{equation}\label{pitts non-homogeneous 1}
    \langle \mathcal{L}_{-s}f,f \rangle \leq (4\delta)^{-s} \frac{\Gamma\left(\frac{1+n-s}{2}\right)^2}{\Gamma\left(\frac{1+n+s}{2}\right)^2} \int\limits_{\mathbb{H}^n}\left (\frac{f(z,t)}{v(z,t)} \right)^2  dz\,dt.
\end{equation}
In terms of Strichartz Fourier transform, we obtain
\begin{equation*}
    \int\limits_{\Omega \times \mathbb{C}^n}(2|\lambda|)^{-s}\left(\frac{\Gamma\left(\frac{2k+n+1-s}{2}\right)}{\Gamma\left(\frac{2k+n+1+s}{2}\right)}\right)\left| \tilde{f}(a,w)\right|^2 d\nu_2(a)\;dw \leq (4\delta)^{-s} \frac{\Gamma\left(\frac{1+n-s}{2}\right)^2}{\Gamma\left(\frac{1+n+s}{2}\right)^2} \int\limits_{\mathbb{H}^n}\left (\frac{f(z,t)}{v(z,t)} \right)^2  dz\,dt.
\end{equation*}

\noindent Using Theorem \ref{Hardy nonhomogeneous for L^s} and a similar technique, we obtain another analogue of Pitt's inequality with non-homogeneous weight as follows 
\begin{equation}\label{Pitts non-homogeneous 2}
 \int\limits_{\Omega \times \mathbb{C}^n}\left( (2k+n)|\lambda|\right)^{-s}   \left|\tilde{f}(a,w) \right|^2 d\nu_2(a) \;dw\leq \lVert U_s \rVert \left( (4\delta)^s \frac{\Gamma\left(\frac{1+n+s}{2}\right)^2}{\Gamma\left(\frac{1+n-s}{2}\right)^2}\right)^{-1} \int\limits_{\mathbb{H}^n}\left|v(z,t)f(z,t)\right|^2dz\;dt.
\end{equation}
Besides the non-homogeneous weight, Hardy's inequality for homogeneous weight has also been proved for \(\mathcal{L}^s\) in the same paper \cite{Roncal-thangavelu-Hardy_inequality}.
\begin{theorem}(Hardy inequality for homogeneous weight)
 Let \(0 < s < 1\). Then for all $f \in C_{0}^{\infty}(\mathbb{H}^n)$,
$$
\frac{2^{2n+3s}\Gamma(\frac{n+s}{2})^2}{\Gamma(1-s)\Gamma(\frac{n}{2})^2} \int_{\mathbb{H}^n} \frac{|f(z,t)|^2}{|(z,t)|^{2s}} dz\,dt \leq \|V_s\| \langle \mathcal{L}^s f, f \rangle.
$$  
\end{theorem}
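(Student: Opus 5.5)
The plan is to prove the inequality first for the conformally invariant operator \(\mathcal{L}_s\), with the weight \(|(z,t)|^{-2s}\), and then pass to \(\mathcal{L}^s\) by a spectral comparison. Throughout I take \(V_s\) to be the bounded spectral multiplier of Roncal--Thangavelu comparing \(\mathcal{L}_s\) with \(\mathcal{L}^s\); this is the analogue of \(U_s=\mathcal{L}_s\mathcal{L}^{-s}\) used in Theorem \ref{Hardy nonhomogeneous for L^s}. Concretely:
\begin{itemize}
\item \(V_s\) acts on the \(k\)-th spectral piece \(f^\lambda *_\lambda \varphi_{k,\lambda}^{n-1}\) by multiplication by the ratio
\[
\frac{(2|\lambda|)^s\,\Gamma\left(\frac{2k+n+1+s}{2}\right)/\Gamma\left(\frac{2k+n+1-s}{2}\right)}{((2k+n)|\lambda|)^s}.
\]
\item The factor \(|\lambda|^s\) cancels, so the ratio depends only on \(k\).
\item By Stirling's formula, \(\Gamma(x+a)/\Gamma(x+b)\sim x^{a-b}\), the ratio tends to \(1\) as \(k\to\infty\). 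Hence \(\|V_s\|=\sup_k(\cdot)<\infty\).
\end{itemize}
Using the spectral decomposition \eqref{definition of L_s} and the corresponding one for \(\mathcal{L}^s\), together with orthogonality of the projections \(f^\lambda*_\lambda\varphi^{n-1}_{k,\lambda}\), one gets \(\langle\mathcal{L}_sf,f\rangle\le\|V_s\|\langle\mathcal{L}^sf,f\rangle\). The theorem therefore reduces to the homogeneous Hardy inequality
\[
\frac{2^{2n+3s}\Gamma(\frac{n+s}{2})^2}{\Gamma(1-s)\Gamma(\frac n2)^2}\int_{\mathbb{H}^n}\frac{|f(z,t)|^2}{|(z,t)|^{2s}}\,dz\,dt\le\langle\mathcal{L}_sf,f\rangle .
\]

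For this reduced inequality I would use the extension-problem (trace Hardy) method of \cite{Roncal-Thangavelu-Trace_hardy_IMRN}, which is the same method used above to obtain \eqref{analogue cor1.6}.
\begin{enumerate}
\item Let \(u(z,t,\rho)\) solve the extension problem for \(\mathcal{L}_s\) with boundary datum \(f\). Then
\[
\langle\mathcal{L}_sf,f\rangle=c_s\int_0^\infty\int_{\mathbb{H}^n}\big(|\nabla_{\mathbb{H}}u|^2+|\partial_\rho u|^2+\tfrac{\rho^2}{4}|\partial_tu|^2\big)\rho^{1-2s}\,dz\,dt\,d\rho .
\]
The constant \(c_s\) contains \(\Gamma(1-s)\), which is why the theorem requires \(0<s<1\).
\item Use a ground state substitution \(u=w\,g\), where \(w\) is the explicit positive solution of the extension equation with boundary value \(w(\cdot,0)=|(z,t)|^{-(Q-2s)/2}\). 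The natural candidate is the Kelvin-type power of the Cowling--Haagerup kernel \(\big((\rho^2+|z|^2)^2+16t^2\big)^{-(n+1-s)/4}\).
\item Integrate by parts in \(\rho\). This leaves a nonnegative energy term in \(g\), which I drop, plus a boundary term
\[
\int_{\mathbb{H}^n}\frac{(\mathcal{L}_s w_0)(z,t)}{w_0(z,t)}\,|f(z,t)|^2\,dz\,dt,\qquad w_0=|(z,t)|^{-(Q-2s)/2}.
\]
\end{enumerate}

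The main obstacle is computing \(\mathcal{L}_s\big(|(z,t)|^{-(Q-2s)/2}\big)\) and showing it equals the constant \(2^{2n+3s}\Gamma(\frac{n+s}{2})^2/(\Gamma(1-s)\Gamma(\frac n2)^2)\) times \(|(z,t)|^{-(Q+2s)/2}\). The reason to expect this identity is that the Heisenberg gauge is homogeneous and \(\mathcal{L}_s\) is conformally covariant.
\begin{itemize}
\item First try: write \(\mathcal{L}_s\) via its explicit fundamental solution, a multiple of \(|(z,t)|^{-Q+2s}\) (cf.\ \cite{Cowling-comp_bdd_multipliers}). Express the power \(|\cdot|^{-(Q-2s)/2}\) as a Riesz-type convolution \(|\cdot|^{-Q+2s}*|\cdot|^{-(Q+2s)/2}\), and evaluate the constant with the group analogue of the Stein composition formula for Riesz potentials quoted above.
\item If that convolution is not tractable, fall back on the strategy of Corollary 1.6 in \cite{Roncal-Thangavelu-Trace_hardy_IMRN}, replacing \(\varphi_s\) by a homogeneous limit. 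This mirrors the passage from \eqref{analogue cor1.6} to \eqref{L_s inequality after fatou's lemma}: apply Fatou's lemma as \(\rho\to0\) and track the Gamma factors with Legendre's duplication formula.
\end{itemize}

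Finally, to reach all of \(C_0^\infty(\mathbb{H}^n)\), I would justify the integrations by parts by approximation. The density argument is routine, since both sides of the inequality are continuous in the \(\mathcal{L}^{s/2}\)-Sobolev norm.
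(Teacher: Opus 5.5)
The paper gives no proof of this statement. It is quoted from \cite{Roncal-thangavelu-Hardy_inequality}, and $V_s$ is never defined in the paper. You take $V_s=U_s$ and reduce the theorem to
\[
K(s)\int_{\mathbb{H}^n}\frac{|f(z,t)|^2}{|(z,t)|^{2s}}\,dz\,dt\le\langle\mathcal{L}_sf,f\rangle,\qquad K(s)=\frac{2^{2n+3s}\Gamma(\frac{n+s}{2})^2}{\Gamma(1-s)\Gamma(\frac n2)^2}.
\]
\textbf{This reduced inequality is false, so the plan cannot be completed.} Fix $0\neq f\in C_0^\infty(\mathbb{H}^n)$ and let $s\to0^+$. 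Then $K(s)\to4^n$, and $\int|f|^2|(z,t)|^{-2s}\to\|f\|_2^2$ by dominated convergence. The multiplier $(2|\lambda|)^s\Gamma(\frac{2k+n+1+s}{2})/\Gamma(\frac{2k+n+1-s}{2})$ of $\mathcal{L}_s$ tends to $1$ while staying below $1+(2k+n+1)|\lambda|$, so $\langle\mathcal{L}_sf,f\rangle\to\|f\|_2^2$. You would get $4^n\le1$. (It also clashes with the bound $C_H\le C_2(n,s)$ in the remark after \eqref{bounds for sharp constant}: for $n=2$, $s=\frac12$, one has $K(s)\approx21$ while $C_2(n,s)\approx4.1$.) In particular, the identity you call the main obstacle, $\mathcal{L}_s|(z,t)|^{-(Q-2s)/2}=K(s)\,|(z,t)|^{-(Q+2s)/2}$, cannot hold.

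The same limit rules out reading $V_s$ as $U_s$. Wendel's inequality $\Gamma(x+s)\le x^s\Gamma(x)$ gives $1\le\|U_s\|\le(1+\frac1n)^s\le2^s$, so with $V_s=U_s$ the theorem itself would fail for small $s$. For the statement to hold, the operator $V_s$ of \cite{Roncal-thangavelu-Hardy_inequality} must satisfy $\liminf_{s\to0}\|V_s\|\ge4^n$. You need its actual definition and the argument attached to it.

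Independently of the constant, the individual steps do not stand up.
\begin{itemize}
\item Homogeneity, $U(n)$-invariance and the CR inversion do not force $\mathcal{L}_s|(z,t)|^{-(Q-2s)/2}$ to be a multiple of a power of the gauge, because these symmetries are not transitive on Kor\'anyi spheres. At the endpoint $s=1$, where $\mathcal{L}_1=\mathcal{L}$, one computes with $N=|(z,t)|$ that $\mathcal{L}N^{-(Q-2)/2}=\frac{(Q-2)^2}{4}\frac{|z|^2}{N^4}N^{-(Q-2)/2}$. So the ground state produces the Garofalo--Lanconelli weight $|z|^2/N^4$, not $N^{-2}$.
\item Your candidate extension $\big((\rho^2+|z|^2)^2+16t^2\big)^{-(n+1-s)/4}$ is the square root of the function whose trace is the fundamental solution $|(z,t)|^{-(Q-2s)}$. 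Already in the Euclidean model, $(\rho^2+|x|^2)^{-(d-2s)/4}$ does not solve the extension equation.
\item The composition formula quoted in the paper lives on $\mathbb{C}^n$; on $\mathbb{H}^n$, Kor\'anyi powers do not compose. For example, $N^{-(Q-1)}*N^{-(Q-1)}\propto N^{-(Q-2)}$ would force $\mathcal{L}_{1/2}^2=c\,\mathcal{L}$, which fails on the spectral side.
\item The Fatou fallback is exactly how the paper obtains \eqref{hardy via trace hardy}. Since $|z|\le|(z,t)|$, combining it with $\langle\mathcal{L}_sf,f\rangle\le\|U_s\|\langle\mathcal{L}^sf,f\rangle$ does give a valid homogeneous Hardy inequality for $\mathcal{L}^s$. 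Its constant, however, is $4^s\frac{n-1-s}{n-1+s}\big(\Gamma(\frac{n+1+s}{2})/\Gamma(\frac{n+1-s}{2})\big)^2$, not $K(s)$.
\end{itemize}
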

Using the same line of reasoning as in the case of the non-homogeneous weight case, one can derive the form of Pitt's inequality with the homogeneous weight \(|(z,w)|^s\), for \(0<s<1\). Namely, 
\begin{equation}\label{Pitt's homogeneous}
 \int\limits_{\Omega \times \mathbb{C}^n}\left( (2k+n)|\lambda|\right)^{-s}   \left|\tilde{f}(a,w) \right|^2 d\nu_2(a) \;dw\leq (2\pi)^{-n}\lVert V_s \rVert  \left( \frac{2^{2n+3s}\Gamma(\frac{n+s}{2})^2}{\Gamma(1-s)\Gamma(\frac{n}{2})^2}\right)^{-1}  \int\limits_{\mathbb{H}^n}\left|(z,t) \right|^{2s}\left(f(z,t)\right)^2dz\;dt.
\end{equation}

 \noindent We summarize the results in the inequalities \eqref{pitts non-homogeneous 1}, \eqref{Pitts non-homogeneous 2} and \eqref{Pitt's homogeneous} in the following theorem.
\begin{theorem}
Let \(f \in \mathcal{S}(\mathbb{H}^n)\). Then the following hold
\begin{itemize}
    \item non-homogeneous case: Let \(0 < s < \frac{n+1}{2}\) and \(\delta > 0\). Then the following hold 
    \begin{enumerate}
        \item[(a)] \begin{align*}
\int\limits_{\Omega \times \mathbb{C}^n}(2|\lambda|)^{-s}\left(\frac{\Gamma\left(\frac{2k+n+1-s}{2}\right)}{\Gamma\left(\frac{2k+n+1+s}{2}\right)}\right)&\left| \tilde{f}(a,w)\right|^2 d\nu_2(a)\;dw\\
&\leq (4\delta)^{-s} \frac{\Gamma\left(\frac{1+n-s}{2}\right)^2}{\Gamma\left(\frac{1+n+s}{2}\right)^2} \int\limits_{\mathbb{H}^n}\left (\frac{f(z,t)}{v(z,t)} \right)^2  dz\,dt. 
        \end{align*}
        \item[(b)] \begin{align*}
 \int\limits_{\Omega \times \mathbb{C}^n}&\left( (2k+n)|\lambda|\right)^{-s}   \left|\tilde{f}(a,w) \right|^2 d\nu_2(a) \;dw\\
& \leq \lVert U_s \rVert \left( (4\delta)^s \frac{\Gamma\left(\frac{1+n+s}{2}\right)^2}{\Gamma\left(\frac{1+n-s}{2}\right)^2}\right)^{-1}\int\limits_{\mathbb{H}^n}\left(\left(\delta + \frac{1}{4}|z|^2\right)^2 + t^2\right)^{s}\left|f(z,t)\right|^2dz\;dt.
\end{align*}
    \end{enumerate}
    \item homogeneous case: Let \(0 < s < 1\), then the following hold
    \begin{align*}
     \int\limits_{\Omega \times \mathbb{C}^n}\left( (2k+n)|\lambda|\right)^{-s}&   \left|\tilde{f}(a,w) \right|^2 d\nu_2(a) \;dw\\
     &\leq \lVert V_s \rVert  \left( \frac{2^{2n+3s}\Gamma(\frac{n+s}{2})^2}{\Gamma(1-s)\Gamma(\frac{n}{2})^2}\right)^{-1} \int\limits_{\mathbb{H}^n}\left|(z,t) \right|^{2s}\left(f(z,t)\right)^2dz\;dt. 
    \end{align*}
\end{itemize}
\end{theorem}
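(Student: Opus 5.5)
The plan is to run the same duality argument that produced \eqref{pitts non-homogeneous 1} from Theorem \ref{hardy nonhomogeneous for L_s}, once for each Hardy inequality recalled above. The last step each time is to rewrite the left-hand side in terms of $\tilde f$, using the spectral identity \eqref{expression L_-sf,f} together with \eqref{strichartz relation formula}.

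\emph{The abstract duality step.} Let $A$ be a positive self-adjoint operator on $L^2(\mathbb{H}^n)$ ($A=\mathcal{L}_s$, or $A=\mathcal{L}^s$ with the factor $\|U_s\|$ or $\|V_s\|$ absorbed into the constant). Suppose the weight $w>0$ satisfies $c\int |g|^2 w^{-2}\le \langle Ag,g\rangle$. Given $f\in\mathcal{S}(\mathbb{H}^n)$, put $g=A^{-1}f$; this is legitimate since $f\in\mathcal{S}$ makes $g$ lie in the relevant form domain, by the spectral representations. Cauchy--Schwarz gives
\[
\langle A^{-1}f,f\rangle=\langle g,f\rangle\le \Big(\int |g|^2w^{-2}\Big)^{1/2}\Big(\int |f|^2w^{2}\Big)^{1/2}\le \big(c^{-1}\langle Ag,g\rangle\big)^{1/2}\Big(\int |f|^2w^2\Big)^{1/2}.
\]
Since $\langle Ag,g\rangle=\langle A^{-1}f,f\rangle$, we divide and square to obtain $\langle A^{-1}f,f\rangle\le c^{-1}\int|f|^2w^2$.

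\emph{Case (a).} Take $A=\mathcal{L}_s$, $w=v$ and $c=(4\delta)^s\Gamma(\frac{1+n+s}{2})^2/\Gamma(\frac{1+n-s}{2})^2$. Note that $\mathcal{L}_s^{-1}=\mathcal{L}_{-s}$, because the multipliers in \eqref{definition of L_s} invert each other. Then $\langle\mathcal{L}_{-s}f,f\rangle$ is converted into the $\tilde f$ integral exactly as in the computation preceding Theorem \ref{theorem pitts weight |z|2s}.

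\emph{Case (b).} Use Theorem \ref{Hardy nonhomogeneous for L^s}, i.e.\ $A=\mathcal{L}^s$ and $c=(4\delta)^s\Gamma(\cdot)^2/\Gamma(\cdot)^2\,\|U_s\|^{-1}$. This gives $\langle\mathcal{L}^{-s}f,f\rangle\le \|U_s\|c'^{-1}\int v^2|f|^2$, where $c'=(4\delta)^s\Gamma(\frac{1+n+s}{2})^2/\Gamma(\frac{1+n-s}{2})^2$, and $v^2$ is exactly the weight $\big((\delta+\frac14|z|^2)^2+t^2\big)^s$ in the statement. By the spectral decomposition of $\mathcal{L}^{-s}$, its quadratic form has the same structure as \eqref{expression L_-sf,f}, with the multiplier replaced by $((2k+n)|\lambda|)^{-s}$. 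Applying \eqref{strichartz relation formula} turns it into $\int_{\Omega\times\mathbb{C}^n}((2k+n)|\lambda|)^{-s}|\tilde f|^2\,d\nu_2\,dw$.

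\emph{Homogeneous case.} This is identical, using the homogeneous Hardy inequality with $w(z,t)=|(z,t)|^{-s}$ and constant $\frac{2^{2n+3s}\Gamma(\frac{n+s}{2})^2}{\Gamma(1-s)\Gamma(\frac n2)^2}\|V_s\|^{-1}$. That inequality is stated for $C_0^\infty$, so a density argument extends it to $g=\mathcal{L}^{-s}f$.

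\emph{Main obstacle.} The delicate point is the bookkeeping of normalising constants. Passing from $\|f^\lambda*_\lambda\varphi^{n-1}_{k,\lambda}\|_2^2$ to $\int|\tilde f|^2dw$ involves the factors $(2\pi)^{\pm n}|\lambda|^{\pm n}$ and $c_{n,k}^{\pm2}$, and these must exactly reproduce the measure $d\nu_2$. In the homogeneous case this leaves a harmless extra $(2\pi)^{-n}$ factor, as in \eqref{Pitt's homogeneous}, which can be bounded by $1$ on the right-hand side. A second point is justifying that $g=A^{-1}f$ belongs to the domain where the Hardy inequality holds. For $\mathcal{L}^{-s}$ with $s<\frac{n+1}{2}<\frac Q2$ this follows from integrability of $|\lambda|^{-s}$ against $|\lambda|^{2n}\,d\lambda$ near $0$, which gives $\mathcal{L}^{-s/2}f\in L^2$ and hence $g\in W^{s,2}$ after an approximation.
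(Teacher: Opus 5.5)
Your argument is the one the paper uses: the Cauchy--Schwarz duality from the Euclidean discussion (bound $\langle Ag,g\rangle$ through the Hardy weight, then put $g=A^{-1}f$), applied to the three Hardy inequalities. It is followed by rewriting the quadratic forms through \eqref{expression L_-sf,f} and \eqref{strichartz relation formula}. For (b) and the homogeneous case this gives exactly the printed bounds.

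The problem is case (a), which you pass over without comment. With $w=v$ your step produces
\[
\langle\mathcal{L}_{-s}f,f\rangle\le (4\delta)^{-s}\frac{\Gamma(\frac{1+n-s}{2})^2}{\Gamma(\frac{1+n+s}{2})^2}\int_{\mathbb{H}^n}|f(z,t)|^2\,v(z,t)^2\,dz\,dt .
\]
That is the weight $v^2=\big((\delta+\frac14|z|^2)^2+t^2\big)^{s}$, whereas (a) as printed has $(f/v)^2$, i.e.\ the Hardy weight $v^{-2}$. No argument can produce the printed version, because it is false. Since $v^{-2}\le\delta^{-2s}$, it would give $\langle\mathcal{L}_{-s}f,f\rangle\le C\|f\|_2^2$. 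But $\mathcal{L}_{-s}$ is homogeneous of degree $-2s$ under $\delta_r$, so replacing $f$ by $\delta_r f$ multiplies the left side by $r^{-Q-2s}$ and $\|f\|_2^2$ only by $r^{-Q}$; letting $r\to0$ gives a contradiction. Thus (a), like \eqref{pitts non-homogeneous 1} which it summarises, contains a misprint. The intended right-hand side is $\int_{\mathbb{H}^n}|fv|^2\,dz\,dt$, consistent with (b). You should say explicitly that this corrected inequality is what you prove, instead of presenting your argument as a proof of (a) as stated.

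There are also some smaller corrections.
\begin{itemize}
\item In the homogeneous case your abstract step needs $w(z,t)=|(z,t)|^{s}$, not $|(z,t)|^{-s}$. The point is that $w^{-2}$ must be the Hardy weight $|(z,t)|^{-2s}$, so that $w^{2}=|(z,t)|^{2s}$ lands on the right.
\item With the paper's normalisations the factors $(2\pi)^{\pm n}$, $|\lambda|^{\pm n}$, $c_{n,k}^{\pm2}$ combine exactly into $d\nu_2$. Hence $\langle\mathcal{L}^{-s}f,f\rangle=\int_{\Omega\times\mathbb{C}^n}((2k+n)|\lambda|)^{-s}|\tilde f|^2\,d\nu_2\,dw$, no stray $(2\pi)^{-n}$ arises, and the printed constants are exactly the duality constants.
\item Your domain remark checks the wrong quantity. $\mathcal{L}^{-s/2}f\in L^2$ only gives $\mathcal{L}^{s/2}g\in L^2$, whereas $g=\mathcal{L}^{-s}f\in W^{s,2}(\mathbb{H}^n)$ also needs $g\in L^2$, i.e.\ $\int((2k+n)|\lambda|)^{-2s}|\tilde f|^2\,d\nu_2\,dw<\infty$. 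That is where $s<\frac{n+1}{2}=\frac Q4$ enters, via $\|E_{[0,\mu]}f\|_2^2\lesssim\mu^{Q/2}\|f\|_1^2$ for the spectral projections of $\mathcal{L}$. The sum over $k$ matters here; integrability of a power of $|\lambda|$ against $|\lambda|^{2n}\,d\lambda$ does not capture it. The same applies to $\mathcal{L}_{-s}f$, whose multiplier is comparable to $((2k+n)|\lambda|)^{-s}$.
\end{itemize}
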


\section*{Concluding Remarks}
In this paper, we investigated weighted Pitt-type inequalities of the form
\begin{equation*}
        \left( \int\limits_{\Omega \times \mathbb{C}^n}|\tilde{f}(a,w)|^q u(a,w)^q dw\;d\nu_2(a)  \right )^{1/q}\leq C \left( \int\limits_{\mathbb{H}^n}|{f}(z,t)|^p v(z,t)^p dz\;dt  \right )^{1/p},
    \end{equation*}
    where \(\tilde{f}\) refers to the Strichartz Fourier transform on the Heisenberg group, \(u\) and \(v\) denote two non-negative measurable functions on \(\Omega \times \mathbb{C}^n\) and \(\mathbb{H}^n\) respectively.
    \begin{itemize}
        \item First by employing the Calder\'on's operator and the weighted Hardy's inequality we established the sufficient conditions for Pitt's inequality for the Strichartz Fourier transform on the Heisenberg group which is proved in Theorem \ref{theorem-Pitts for Hn}.
\item In section \ref{sec 4}, we used the estimate of first zero of the Laguerre polynomial \(L^{n-1}_k\) to derive the necessary conditions (given by \eqref{necessary conditions k>=1} and \eqref{necessary conditions k=0}) on weights which are implied by the Pitt's inequality. 
\item In section \ref{sec 5}, we obtain sufficient and necessary conditions on some non trivial weights.
    \end{itemize}
 Furthermore, we studied Paley's inequality for the Strichartz Fourier transform in section \ref{sec 6} and also examined the relationship between Hardy’s and Pitt’s inequalities in section \ref{sec 7}. After recalling their equivalence in the case of \(\mathbb{R}^n\) we obtained corresponding results for the case of \(\mathbb{H}^n\). More precisely, we obtained 
 \begin{itemize}
     \item Pitt's inequality \eqref{pitts with weight |z|-2s} for the conformal sublaplacian \(\mathcal{L}_s\) with weight \(|z|^{2s}\) which lead to Hardy's inequality \eqref{equiv hardy for weight |z|2s}. Rewriting in terms of the Strichartz Fourier transform we obtained Theorem \ref{theorem pitts weight |z|2s}.
     \item Pitt's inequality \eqref{pitts for weight |(z,t)|2s} for the conformal sublaplacian with weight \(|(z,t)|^{2s}\) which lead to Hardy's inequality \eqref{equiv hardy for weight |(z,t)|2s}. Rewriting in terms of the Strichartz Fourier transform we obtained Theorem \ref{theorem pitts weight |(z,t)|2s}.
     \item Another form of Hardy's inequality \eqref{hardy via trace hardy} and corresponding Pitt's inequality \eqref{pitts via trace hardy} is obtained adapting the techniques used in \cite{Roncal-Thangavelu-Trace_hardy_IMRN, corr-Roncal-Thangavelu-trace_hardy_imrn}. Moreover, bounds for sharp constant have been obtained.
     \item Using Hardy's inequality for the conformal sublaplacian and the fractional sublaplacian \(\mathcal{L}^s\) with non-homogeneous weight, we obtained Pitt type inequalities \eqref{pitts non-homogeneous 1} and \eqref{Pitts non-homogeneous 2}.
     \item using Hardy's inequality for the fractional sublaplacian with homogeneous weight, we obtained Pitt's inequality \eqref{Pitt's homogeneous}.
 \end{itemize}
 We would like to mention that the constants appearing in the deduced Pitt type inequalities are, however, not sharp.
 \section*{Acknowledgement}
 We would like to express our gratitude to Dr. Tapendu Rana for recommending an article that helped enhance a proof. The second author sincerely appreciates the Indian Institute of Technology Delhi for providing the Institute fellowship. Additionally, part of this work was carried out while the fourth author was visiting the Indian Institute of Technology Bombay, and he extends his gratitude for the hospitality provided by the institute.

\section*{Conflict of Interest}
The authors declare that they have no competing interests.

\bibliographystyle{acm}
\bibliography{ref_Pitts}

\begin{thebibliography}{10}

\bibitem{handbook}
{\sc Abramowitz, M., and Stegun, I.~A.}
\newblock {\em Handbook of mathematical functions with formulas, graphs, and mathematical tables}, vol.~No. 55 of {\em National Bureau of Standards Applied Mathematics Series}.
\newblock U. S. Government Printing Office, Washington, DC, 1964.

\bibitem{Beckner-Pitts_and_uncertainty_principle_1995}
{\sc Beckner, W.}
\newblock Pitt's inequality and the uncertainty principle.
\newblock {\em Proc. Amer. Math. Soc. 123}, 6 (1995), 1897--1905.

\bibitem{beckner-Pitts_with_sharp_convolution_estimates}
{\sc Beckner, W.}
\newblock Pitt's inequality with sharp convolution estimates.
\newblock {\em Proc. Amer. Math. Soc. 136}, 5 (2008), 1871--1885.

\bibitem{benedetto-heinig-weighted-inequalities}
{\sc Benedetto, J.~J., and Heinig, H.~P.}
\newblock Weighted {F}ourier inequalities: new proofs and generalizations.
\newblock {\em J. Fourier Anal. Appl. 9}, 1 (2003), 1--37.

\bibitem{bennett's_book}
{\sc Bennett, C., and Sharpley, R.}
\newblock {\em Interpolation of operators}, vol.~129 of {\em Pure and Applied Mathematics}.
\newblock Academic Press, Inc., Boston, MA, 1988.

\bibitem{bradley-weighted-hardy}
{\sc Bradley, J.~S.}
\newblock Hardy inequalities with mixed norms.
\newblock {\em Canad. Math. Bull. 21}, 4 (1978), 405--408.

\bibitem{Branson-inequalities_on_CR_Sphere}
{\sc Branson, T.~P., Fontana, L., and Morpurgo, C.}
\newblock Moser-{T}rudinger and {B}eckner-{O}nofri's inequalities on the {CR} sphere.
\newblock {\em Ann. of Math. (2) 177}, 1 (2013), 1--52.

\bibitem{calderon66}
{\sc Calder\'on, A.-P.}
\newblock Spaces between {$L\sp{1}$} and {$L\sp{\infty }$} and the theorem of {M}arcinkiewicz.
\newblock {\em Studia Math. 26\/} (1966), 273--299.

\bibitem{Pitts_for_quaternion_fourier_transform}
{\sc Chen, L.-P., Kou, K.~I., and Liu, M.-S.}
\newblock Pitt's inequality and the uncertainty principle associated with the quaternion {F}ourier transform.
\newblock {\em J. Math. Anal. Appl. 423}, 1 (2015), 681--700.

\bibitem{thangavelu-roncal-oscar-Hardy_inequalities}
{\sc Ciaurri, O., Roncal, L., and Thangavelu, S.}
\newblock Hardy-type inequalities for fractional powers of the {D}unkl-{H}ermite operator.
\newblock {\em Proc. Edinb. Math. Soc. (2) 61}, 2 (2018), 513--544.

\bibitem{Cowling-comp_bdd_multipliers}
{\sc Cowling, M., and Haagerup, U.}
\newblock Completely bounded multipliers of the {F}ourier algebra of a simple {L}ie group of real rank one.
\newblock {\em Invent. Math. 96}, 3 (1989), 507--549.

\bibitem{Cowling-Price-HPW-inequality}
{\sc Cowling, M.~G., and Price, J.~F.}
\newblock Bandwidth versus time concentration: the {H}eisenberg-{P}auli-{W}eyl inequality.
\newblock {\em SIAM J. Math. Anal. 15}, 1 (1984), 151--165.

\bibitem{Pitts_and_Boas_for_Fourier_and_Hankel}
{\sc De~Carli, L., Gorbachev, D., and Tikhonov, S.}
\newblock Pitt and {B}oas inequalities for {F}ourier and {H}ankel transforms.
\newblock {\em J. Math. Anal. Appl. 408}, 2 (2013), 762--774.

\bibitem{Pitts_inequalities_Carli-Tikhonov}
{\sc De~Carli, L., Gorbachev, D., and Tikhonov, S.}
\newblock Pitt inequalities and restriction theorems for the {F}ourier transform.
\newblock {\em Rev. Mat. Iberoam. 33}, 3 (2017), 789--808.

\bibitem{Frank_and_Lieb-sharp_constants_on_Hn}
{\sc Frank, R.~L., and Lieb, E.~H.}
\newblock Sharp constants in several inequalities on the {H}eisenberg group.
\newblock {\em Ann. of Math. (2) 176}, 1 (2012), 349--381.

\bibitem{Tikhonov-weighted_norm_integral_transforms}
{\sc Gorbachev, D., Liflyand, E., and Tikhonov, S.}
\newblock Weighted norm inequalities for integral transforms.
\newblock {\em Indiana Univ. Math. J. 67}, 5 (2018), 1949--2003.

\bibitem{grafakos_book}
{\sc Grafakos, L.}
\newblock {\em Classical {F}ourier analysis}, second~ed., vol.~249 of {\em Graduate Texts in Mathematics}.
\newblock Springer, New York, 2008.

\bibitem{heinig_1984}
{\sc Heinig, H.~P.}
\newblock Weighted norm inequalities for classes of operators.
\newblock {\em Indiana Univ. Math. J. 33}, 4 (1984), 573--582.

\bibitem{Hormander-1960-paley}
{\sc H\"ormander, L.}
\newblock Estimates for translation invariant operators in {$L\sp{p}$}\ spaces.
\newblock {\em Acta Math. 104\/} (1960), 93--140.

\bibitem{weighted-fourier-symmetric-spaces-Pusti}
{\sc Kumar, P., Pusti, S., Rana, T., and Singh, M.}
\newblock Weighted fourier inequalities and application of restriction theorems on rank one riemannian symmetric spaces of noncompact type.
\newblock {\em arXiv:2406.06268\/} (2024).

\bibitem{Lieb-Loss}
{\sc Lieb, E.~H., and Loss, M.}
\newblock {\em Analysis}, second~ed., vol.~14 of {\em Graduate Studies in Mathematics}.
\newblock American Mathematical Society, Providence, RI, 2001.

\bibitem{Liflyand-Tikhonov}
{\sc Liflyand, E., and Tikhonov, S.}
\newblock Two-sided weighted {F}ourier inequalities.
\newblock {\em Ann. Sc. Norm. Super. Pisa Cl. Sci. (5) 11}, 2 (2012), 341--362.

\bibitem{frank-oliver-handbook}
{\sc Olver, F.~W.}
\newblock {\em NIST handbook of mathematical functions hardback and CD-ROM}.
\newblock Cambridge university press, 2010.

\bibitem{Pitts-1937}
{\sc Pitt, H.~R.}
\newblock Theorems on {F}ourier series and power series.
\newblock {\em Duke Math. J. 3}, 4 (1937), 747--755.

\bibitem{Roncal-Chunxia-Sharp_Lp_Hardy_Stein_Weiss}
{\sc Roncal, L., and Tao, C.}
\newblock Sharp \({L}^p\) {H}ardy and {S}tein-{W}eiss inequalities on the {H}eisenberg group.
\newblock (preprint).

\bibitem{Roncal-thangavelu-Hardy_inequality}
{\sc Roncal, L., and Thangavelu, S.}
\newblock Hardy's inequality for fractional powers of the sublaplacian on the {H}eisenberg group.
\newblock {\em Adv. Math. 302\/} (2016), 106--158.

\bibitem{Roncal-Thangavelu-Trace_hardy_IMRN}
{\sc Roncal, L., and Thangavelu, S.}
\newblock An extension problem and trace {H}ardy inequality for the sub-{L}aplacian on {$H$}-type groups.
\newblock {\em Int. Math. Res. Not. IMRN}, 14 (2020), 4238--4294.

\bibitem{corr-Roncal-Thangavelu-trace_hardy_imrn}
{\sc Roncal, L., and Thangavelu, S.}
\newblock Corrigendum to: {A}n extension problem and trace {H}ardy inequality for the sublaplacian on {$H$}-type groups.
\newblock {\em Int. Math. Res. Not. IMRN}, 12 (2022), 9598--9602.

\bibitem{book-fractional_integrals}
{\sc Samko, S.~G., Kilbas, A.~A., and Marichev, O.~I.}
\newblock {\em Fractional integrals and derivatives}.
\newblock Gordon and Breach Science Publishers, Yverdon, 1993.
\newblock Theory and applications,.

\bibitem{szego}
{\sc Szeg\"o, G.}
\newblock {\em Orthogonal {P}olynomials}, vol.~Vol. 23 of {\em American Mathematical Society Colloquium Publications}.
\newblock American Mathematical Society, New York, 1939.

\bibitem{thangavelu-uncertainty-book}
{\sc Thangavelu, S.}
\newblock {\em An introduction to the uncertainty principle}, vol.~217 of {\em Progress in Mathematics}.
\newblock Birkh\"auser Boston, Inc., Boston, MA, 2004.
\newblock Hardy's theorem on Lie groups, With a foreword by Gerald B.\ Folland.

\bibitem{thangaveluheisenberg}
{\sc Thangavelu, S.}
\newblock {\em Harmonic analysis on the Heisenberg group}, vol.~159.
\newblock Springer Science \& Business Media, 2012.

\bibitem{Strichartz_Fourier_thangavelu}
{\sc Thangavelu, S.}
\newblock A scalar-valued {F}ourier transform for the {H}eisenberg group.
\newblock In {\em From classical analysis to analysis on fractals. {V}ol. 1. {A} tribute to {R}obert {S}trichartz}, Appl. Numer. Harmon. Anal. Birkh\"auser/Springer, Cham, (2023), pp.~137--163.

\bibitem{wong_weyl-transforms}
{\sc Wong, M.~W.}
\newblock {\em Weyl transforms}.
\newblock Universitext. Springer-Verlag, New York, 1998.

\end{thebibliography}
\end{document}